\newtheorem{Thm}{Theorem}
\newtheorem{Prop}{Proposition}
\newtheorem{Lem}{Lemma}
\newtheorem{Rq}{Remark}
\newtheorem{Def}{Definition}
\newcommand{\N}{N_{\rho}}
\newcommand{\Mr}{\widetilde{M}_{\rho}}
\newcommand{\I}{\int_{\mathbb{R}^d}\int_{\mathbb{R}_+}}
\newcommand{\ma}{\mathcal{M}_{\alpha ,\beta_1, \beta_2}}
\newcommand{\maa}{\mathcal{M}_{\alpha' ,\beta_1, \beta_2}}
\newcommand{\mb}{\mathcal{M}_{\alpha ,\beta}}
\newcommand{\mcal}{\mathcal{M}}
\newcommand{\NN}{\widetilde{N}_{\rho}}
\newcommand{\m}{M_{\rho}}
\newcommand{\II}{\int_{\mathbb{R}^d}\int_{\mathbb{R}_+}\int_{\mathbb{R}}}
\newcommand{\Rd}{\mathbb{R}^d}
\newcommand{\Z}{\mathcal{Z}(\mathbb{R}^d)}
\title{Random Balls Model With Dependence}
\author{Renan \bsc{GOBARD} \thanks{IRMAR, Université de Rennes 1, 35042 Rennes. Email: renan.gobard@univ-rennes1.fr}}
\begin{document}
%\fontsize{11pt}{12pt}\selectfont 

\maketitle

\begin{abstract}
In this article, we consider a configuration of weighted random balls in $\Rd$ generated according to a Poisson point process. The model investigated exhibits inhomogeneity, as well as dependence between the centers and the radii and heavy tails phenomena. We investigate the asymptotic behaviour of the total mass of the configuration of the balls at a macroscopic level. Three different regimes appear depending on the intensity parameters and the zooming factor. Among the three limiting fields, two are stable while the third one is a Poisson integral bridging between the two stable regimes. For some particular choices of the inhomogeneity function, the limiting fields exhibit isotropy or self-similarity. 
\end{abstract}

\section{Introduction}

In this work, we generalize the weighted random balls model introduced in \cite{KT} and \cite{Jcb} into an inhomogeneous weighted random balls model including dependence. This model is an aggregation of weighted random balls whose centers and radii are distributed, as a couple $(x,r)$, according to a Poisson point process. The weight of a ball is independent of its center and radius. On this model we consider the field that, at each point, is the sum of the weights of the balls containing the point. We investigate macroscopic behaviour of such a quantity, that is the convergence of the model while performing a zoom-out. As it is common in such models (see \cite{Bek}, \cite{Jcb}, \cite{Kaj} and \cite{KT}), suitable scalings and normalizations allow us to exhibit three different limit fields: a stable field with dependence, a Poissonian field and a stable field with independence. 

The differences between this model and the weighted random balls model in \cite{Jcb} rely on two points:
\begin{itemize}
\item the dependence between the radii and the centers that lies in the tail index of the radii,
\item the inhomogeneity that comes first from the dependence but that is also reinforced by the introduction of an inhomogeneous intensity for the Poisson point process.
\end{itemize} 
By relaxing the dependence with a constant radius tail index, we recover as a particular model one with inhomogeneous random balls whose radius is independent of the center. We distribute the weights in the same way as in \cite{Jcb}, that is in the domain of attraction of an $\alpha$-stable distribution. 

We briefly discuss the models investigated so far. In \cite{MR} and \cite{KT}, weighted random balls models are studied in dimension 1. In \cite{Be}, a random balls model (with weight constant equal to 1) is studied under a zoom-in scaling, whereas it is investigated under a zoom-out scaling in \cite{Kaj}. Both approaches are gathered in \cite{Bek}, which gives a general framework to allow both zoom-in and zoom-out procedures. This framework is then used in \cite{Jcb}, to study the so-called weighted random balls model.
The process arising under the intermediate regime in \cite{KT} (which correspond to the Poissonian field in our study), is deeply studied in \cite{Gai} and seen as a bridge between the two other processes obtained (the two stable fields in our case). 

The main contribution of this paper is the introduction of dependence and inhomogeneity in such models. In dimension $1$, where those model recover queuing problems like packet networks computer traffic (see \cite{KT}), it models rates of connection and lengths of connection that depend of the time. Random balls models also apply in dimension $2$ to imagery (see \cite{Be}) or to wireless network (see \cite{Kaj}). In that latter example, the inhomogeneity takes into account that the number of antennas as well as the radius of emission is different from a region to another. In higher dimension, it models porous or heterogeneous media (see \cite{Be} again).

 We describe the setting in Section \ref{sec:mod}. Our main results are stated, discussed and proved in Section \ref{sec:thm}. In the last section, we briefly discuss about the asymptotic behaviour of the model at a microscopic level.

\section{The inhomogeneous weighted random balls model with dependence}\label{sec:mod}

We are interested in a weighted random balls model where the triplets $(x,r,m)$ (i.e. center, radius and weight) are generated according to a Poisson random measure $N(dx,dr,dm)$ with intensity $f(x,r)dxdrG(dm)$, where $f$ is a positive function defined on $\mathbb{R}^d\times \mathbb{R}^+$ and $G$ is a probability measure on $\mathbb{R}$.

The absolutely continuous measure $f(x,r)dxdr=F_x(dr)dx$ drives the distribution of the centers and the radii. All along, we make the assumption that 
\begin{align}
&r\mapsto \Vert f(\cdot,r) \Vert_{\infty} \mbox{ is continuous},  \label{cf1} \\
&\int_{\mathbb{R}^+} r^{d} \Vert f(\cdot,r) \Vert_{\infty} dr<+\infty. \label{cf2}
\end{align}
We need also to know the behaviour of $f$ for large radii. In that purpose, we suppose that :
\begin{equation}
f(x,r)\sim_{r\to +\infty} \frac{g(x)}{r^{\beta(x)+1}}, \label{cf}\\
\end{equation}
uniformly in $x\in \Rd$, where $g$ and $\beta$ are positive functions on $\Rd$. 

Roughly speaking, the dependence between the centers and the radii lies in the function $\beta$ whereas the inhomogeneity mostly lies in the function $g$. Note that Condition \eqref{cf1} and \eqref{cf} imply $g\in L^{\infty}(\Rd)$.We suppose that there exist two constants $\beta_1$ and $\beta_2$ such that
\begin{equation}
d<\beta_1 \leq \beta(x) \leq \beta_2.\label{cbeta}
\end{equation}
Typically, taking $\beta=\beta_11_{B_1}+\beta_{2}1_{B_2}$, with $B_1\sqcup B_2=\Rd$, means that the radii have two typical different behaviours on $B_1$ and $B_2$. General choices of functions $\beta$ satisfying \eqref{cbeta} give continuous mixtures of these behaviours. Choosing $\beta$ as a constant function gives us an inhomogeneous model  with (asymptotically) no dependence.  Note that Conditions \eqref{cf2} and \eqref{cf} require $d<\beta_1$. Moreover the parameter $\beta_2$ is a technical parameter and do not appear in the final results.

 We suppose that the probability measure $G$ belongs to the domain of attraction of the $\alpha$-stable distribution $S_{\alpha}(\sigma, b, \tau)$ (using the terminology from \cite{ST}) with $\alpha \in \left(1, 2\right]$. Taking $\alpha=2$ implies that $G$ is any finite variance distribution and recovers the typical weight of \cite{KT}. When $\alpha\in (1,2)$, a typical choice could be a heavy tailed distribution.

\bigskip

\begin{Rq}
The choice $f(x,r)=f(r)$ (and thus $g$=1 and $\beta$ is a constant function) recovers the setting of \cite{Jcb}.
\end{Rq}

\bigskip

Let $y\in \Rd$, the algebraic weights of all the balls containing $y$ is given by 
\begin{displaymath}
\sum M_i\delta_y(B(X_i,R_i))=\II m\delta_y(B(x,r))N(dx,dr,dm)=M(\delta_y),
\end{displaymath}
where $(X_i,R_i,M_i)$ are generated according to the Poisson measure $N$. To study the mass generated by the model, we generalize the random field $M$ to all signed measures $\mu$ on $\mathbb{R}^d$ with a finite total variation (whose set is denoted by $\mathcal{Z}(\mathbb{R}^d)$):
\begin{displaymath}
\forall \mu \in \mathcal{Z}(\mathbb{R}^d), \: M(\mu)= \II m\mu(B(x,r))N(dx,dr,dm).
\end{displaymath}

The above stochastic integral is well defined and has a finite expectation, according to the following proposition. 
\begin{Prop} For all $\mu \in \mathcal{Z}(\Rd)$, we have $\mathbb{E}\left[  \vert M(\mu)\vert \right]<+\infty$ and
\begin{equation}\label{prop:exp}
\mathbb{E}\left[ M(\mu) \right]=\II m\mu(Bx,r)f(x,r)dxdrG(dm).	
\end{equation}
\end{Prop}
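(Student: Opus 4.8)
The plan is to use the exponential formula (Campbell's theorem) for Poisson random measures, applied to the function $h(x,r,m) = m\,\mu(B(x,r))$, and to check that the relevant integrability condition holds so that $M(\mu)$ is well defined and integrable. Recall that for a Poisson random measure $N$ with intensity $\nu(dx,dr,dm) = f(x,r)\,dx\,dr\,G(dm)$, the integral $\int h\,dN$ is well defined as soon as $\int \min(|h|,1)\,d\nu < +\infty$, and when moreover $\int |h|\,d\nu < +\infty$ one has $\mathbb{E}[|\int h\,dN|] < +\infty$ together with $\mathbb{E}[\int h\,dN] = \int h\,d\nu$, which is exactly \eqref{prop:exp}. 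So everything reduces to showing
\begin{displaymath}
\II |m|\,|\mu(B(x,r))|\,f(x,r)\,dx\,dr\,G(dm) < +\infty.
\end{displaymath}

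First I would separate the weight variable from the geometric variables. Since $G$ lies in the domain of attraction of an $\alpha$-stable law with $\alpha \in (1,2]$, we have $\int_{\mathbb{R}} |m|\,G(dm) < +\infty$ (the first absolute moment exists because $\alpha > 1$); call this constant $c_G$. Then by Fubini–Tonelli the triple integral factorizes and it suffices to bound
\begin{displaymath}
c_G \int_{\mathbb{R}^d}\int_{\mathbb{R}_+} |\mu(B(x,r))|\,f(x,r)\,dx\,dr.
\end{displaymath}
Now I would use the crude bound $|\mu(B(x,r))| \le \|\mu\|_{TV} =: \|\mu\|$ together with $f(x,r) \le \|f(\cdot,r)\|_\infty$, but that alone is not integrable in $x$ over all of $\mathbb{R}^d$; the point is that $\mu(B(x,r)) = 0$ unless $B(x,r)$ meets the support region that carries the mass of $\mu$. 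More precisely, using $|\mu(B(x,r))| \le \int_{\mathbb{R}^d} \mathbf{1}_{B(x,r)}(y)\,|\mu|(dy)$ and Tonelli to exchange the $dx$ integral with $|\mu|(dy)$, one gets
\begin{displaymath}
\int_{\mathbb{R}^d}\int_{\mathbb{R}_+} |\mu(B(x,r))|\,f(x,r)\,dx\,dr \le \int_{\mathbb{R}^d}\int_{\mathbb{R}_+}\int_{\mathbb{R}^d} \mathbf{1}_{\{|x-y|<r\}}\,\|f(\cdot,r)\|_\infty\,dx\,dr\,|\mu|(dy).
\end{displaymath}
The inner $dx$-integral equals $v_d\,r^d$ (volume of the ball of radius $r$), so the right-hand side is $v_d\,\|\mu\| \int_{\mathbb{R}_+} r^d\,\|f(\cdot,r)\|_\infty\,dr$, which is finite by assumption \eqref{cf2}. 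This gives the integrability, hence $M(\mu)$ is well defined with finite expectation, and Campbell's formula yields \eqref{prop:exp}.

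The only genuinely delicate point is the exchange of integrals in the step above, i.e. justifying Tonelli: this needs measurability of $(x,r,y) \mapsto \mathbf{1}_{\{|x-y|<r\}}\|f(\cdot,r)\|_\infty$, which follows from assumption \eqref{cf1} (continuity of $r \mapsto \|f(\cdot,r)\|_\infty$) and the obvious measurability of the indicator, plus the fact that $|\mu|$ is a finite positive measure; since everything is nonnegative, Tonelli applies without further ado. A minor secondary point is recalling the precise statement of the exponential formula for signed (not just nonnegative) integrands, but the decomposition $M(\mu) = \int h^+\,dN - \int h^-\,dN$ with both parts having finite $\nu$-integral handles this cleanly. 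I would also remark that the identity \eqref{prop:exp} could alternatively be read off by linearity from the deterministic-measure case, but the Campbell route is the most economical.
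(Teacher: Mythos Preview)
Your proof is correct and follows essentially the same route as the paper: factor out $\int_{\mathbb{R}}|m|\,G(dm)<+\infty$ using $\alpha>1$, bound $|\mu(B(x,r))|\le \int \mathbf{1}_{\{|x-y|<r\}}\,|\mu|(dy)$ and $f(x,r)\le \|f(\cdot,r)\|_\infty$, swap integrals via Tonelli, and integrate in $x$ to produce $c_d r^d$ so that condition \eqref{cf2} gives finiteness. The only difference is cosmetic---you frame the conclusion explicitly as an application of Campbell's formula, whereas the paper simply writes down $\mathbb{E}[|M(\mu)|]$ as the intensity integral and bounds it directly.
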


\begin{proof}

We aim at proving that for all $\mu \in \mathcal{Z}(\mathbb{R}^d)$:
\begin{equation} \label{Cex}
\begin{split}
\mathbb{E}\left[\vert M(\mu)\vert \right] &=\II \vert m\mu(B(x,r))\vert f(x,r)dxdrG(dm) \\
&=\int_{\mathbb{R}}\vert m\vert G(dm)\I \vert \mu(B(x,r))\vert f(x,r)dxdr< +\infty .
\end{split}
\end{equation}

\bigskip

Since $G$ has a finite expectation due to $\alpha >1$, we focus on the second integral. Using Conditions \eqref{cf1} and \eqref{cf2} we have with Fubini Theorem
\begin{displaymath}
\label{cdef}
\begin{split}
\I \vert \mu(B(x,r))\vert  f(x,r)dxdr &\leq \int_{\mathbb{R}^d} \left( \int_{\mathbb{R}^+} \int_{\mathbb{R}^d}  1_{B(y,r)}(x)  f(x,r)dxdr \right) \vert \mu \vert (dy)\\
&\leq \int_{\mathbb{R}^d} \left( \int_{\mathbb{R}^+} \int_{\mathbb{R}^d}  1_{B(y,r)}(x)dx \Vert f(\cdot, r)\Vert_{\infty}dr \right) \vert \mu \vert (dy)\\
& \leq \int_{\mathbb{R}^d} c_d \int_{\mathbb{R}^+} r^d\Vert f(\cdot, r)\Vert_{\infty}dr\vert \mu \vert (dy)\\
&\leq \vert\mu\vert(\Rd) c_d \int_{\mathbb{R}^+} r^d\Vert f(\cdot, r)\Vert_{\infty}dr<+\infty,
\end{split}
\end{displaymath}
where $c_d$ is the volume of the unit Euclidean ball of $\Rd$. This proves \eqref{Cex} and ensures $M(\mu)$ has the finite expectation given in \eqref{prop:exp}.

\end{proof}

\section{Asymptotics}\label{sec:thm}

We analyse the model at a macroscopic level by performing a zoom-out. Heuristically, we introduce a parameter $\rho$, representing the rate of zoom-out: it reduces the mean radius. Accordingly we need to increase the number of centers to obtain something significant at the limit. Indeed, if we just make the change of variable $r\mapsto\rho r$ in the density, using \eqref{cf} we would have
\begin{equation}\label{cfr0}
f(x,r/\rho)\sim_{r \to +\infty} \frac{g(x)}{r^{\beta(x)+1}}\rho^{\beta(x)+1} \to_{\rho \to 0} 0,
\end{equation}
and asymptotics would just vanish. As a consequence, in order to compensate this convergence to $0$, we use a scaled version $f_\rho$ of $f$ such that:
\begin{equation}\label{cfr00}
f_\rho(x,r/\rho)\sim_{r\to +\infty } \frac{g_\rho(x)}{r^{\beta(x)+1}}\rho^{\beta(x)+1}
\end{equation}
and we suppose that $g_\rho(x)\sim_{\rho \to 0} \lambda(\rho) g(x)$ and $\lambda(\rho)\to +\infty$ when $\rho \to 0$. Of course we cannot chain these two latter equivalences so instead we replace \eqref{cf} and \eqref{cfr00} by
\begin{equation}
f_\rho(x,r)\sim_{r\to +\infty} \lambda(\rho) \frac{g(x)}{r^{\beta(x)+1}}, \label{cfr}
\end{equation} 
uniformly both in $x$ and $\rho$. For technical purpose, we also suppose that, for any $\rho>0$, 
 \begin{equation}
 \forall r \in \mathbb{R}^+, \: \Vert f_\rho(\cdot, r)\Vert_\infty\leq \lambda(\rho)\Vert f(\cdot, r)\Vert_\infty.  \label{cfr1} \\
 \end{equation} 
 The parameter $\lambda(\rho)$ plays a role in the density of centers in the sense that:
 \begin{displaymath}
 \lim_{r\to \infty} \frac{1}{c_dr^d}\int_{B(0,r)}g_{\rho}(x)dx\sim \lambda(\rho)\lim_{r\to \infty} \frac{1}{c_dr^d}\int_{B(0,r)}g(x)dx.
 \end{displaymath}
 Thus, when $\lambda(\rho)$ increases, the mean number of balls increases accordingly which compensates the decreasing of the volume of the balls. 
\begin{Rq}
With these notation, the settings of \cite{Jcb} is recovered with $f_{\rho}(x,r)=\lambda(\rho)f(r)$.
\end{Rq}
Our study focuses on the weight random functional given by 
\begin{displaymath}
\m(\mu)=\II m\mu(B(x,r)) \N(dx,dr,dm)
\end{displaymath}
when $\rho\to 0$, where $\N$ stands for the Poisson random measure with the intensity obtained by the scaling $r\to \rho r$
\begin{equation}\label{int}
\rho^{-1}f_\rho(x,r/\rho) dxdr G(dm).
\end{equation} 
 To investigate the fluctuations of the process, we need to center it and exhibit a normalization factor $n(\rho)$: 
\begin{displaymath}
n(\rho)^{-1}\Mr(\mu)=\frac{\m(\mu)-\mathbb{E}\left[\m(\mu)\right]}{n(\rho)}=\II n(\rho)^{-1} m\mu(B(x,r)) \NN(dx,dr,dm),
\end{displaymath}
where $\NN$ stands for the compensated Poisson random measure with intensity \eqref{int}. The characteristic function is well known and its exponent is given by
\begin{equation}\label{fcm}
\begin{split}
\log\left(\varphi_{n(\rho)^{-1}\Mr(\mu)}(\theta)\right)&=\log\left(\mathbb{E}\left[e^{i\theta n(\rho)^{-1}\Mr(\mu)}\right]\right)\\
&=\II \psi\left(\theta n(\rho)^{-1}m\mu(B(x,r))\right)\rho^{-1}f_\rho(x,r/\rho) dxdr G(dm)\\
&=\I \psi_G\left(\theta n(\rho)^{-1}\mu(B(x,r))\right)\rho^{-1}f_\rho(x,r/\rho) dxdr ,
\end{split}
\end{equation}
where $\psi_G(u)=\int_{\mathbb{R}}\psi(mu)G(dm)$ and $\psi(u)=e^{iu}-1-iu$. Note that the function $\psi_G$ is  Lipschitzian with constant $2\int_{\mathbb{R}}\vert m\vert G(dm)$. Indeed, for $u,v\in \mathbb{R}$, we have:
\begin{displaymath}
\vert \psi_G(u)-\psi_G(v)\vert\leq \int_{\mathbb{R}}\vert \psi(mu)-\psi(mv)\vert G(dm)\leq 2\left(\int_{\mathbb{R}}\vert m\vert G(dm)\right) \vert u-v\vert.
\end{displaymath}
 
\bigskip

The asymptotic results depend on three scaling regimes that one can understand by looking at the mean of the number of balls that cover the origin and with a radius large enough. Heuristically, we have from \eqref{cfr}
\begin{displaymath}
\begin{split}
\mathbb{E}\left[\#\left\lbrace (x,r): 0\in B(x,r), r>1\right\rbrace\right] &=\int \int \int_{\left\lbrace (x,r): 0\in B(x,r), r>1\right\rbrace} \rho^{-1}f_\rho(x,r/\rho)dxdrG(dm)\\
&=\int \int_{\left\lbrace (x,r): 0\in B(x,r), r>1\right\rbrace} \rho^{-1}f_\rho(x,r/\rho)dxdr \\
&\sim_{\rho\to 0} \lambda(\rho) \int \int_{\left\lbrace (x,r) : 0\in B(x,r), r>1 \right\rbrace} \frac{\rho^{\beta(x)}g(x)}{r^{\beta(x)+1}} dxdr.
\end{split}
\end{displaymath}

Set $B_1=\left\lbrace x\in \Rd: \beta(x)=\beta_1 \right\rbrace$. Under condition \eqref{cbeta}, for $\rho$ small enough:
\begin{multline*}
\lambda(\rho) \rho^{\beta_1} \int \int_{\left\lbrace (x,r) : 0\in B(x,r), r>1 \right\rbrace} \frac{g(x)1_{B_1}(x)}{r^{\beta_1)+1}} dxdr \\ \leq \lambda(\rho) \int \int_{\left\lbrace (x,r) : 0\in B(x,r), r>1 \right\rbrace} \frac{\rho^{\beta(x)}g(x)}{r^{\beta(x)+1}} dxdr \\ \leq \lambda(\rho) \rho^{\beta_1} \int \int_{\left\lbrace (x,r) : 0\in B(x,r), r>1 \right\rbrace} \frac{g(x)}{r^{\beta(x)+1}} dxdr.
\end{multline*}
Since $\int \int_{\left\lbrace (x,r) : 0\in B(x,r), r>1 \right\rbrace} \frac{g(x)}{r^{\beta(x)+1}} dxdr<+\infty$, if the set $B_1$ is non negligible,  three regimes appear:
\begin{itemize}
\item small-balls scaling: $\lambda(\rho)\rho^{\beta_1}\to 0$,
\item intermediate scaling: $\lambda(\rho)\rho^{\beta_1} \to \ell \in (0,+\infty)$,
\item large-balls scaling: $\lambda(\rho)\rho^{\beta_1 } \to + \infty$.
\end{itemize}
Note that the scaling regimes do not depend on the value $\beta_2$ or even other intermediate value of $\beta$ but only on $\beta_1$. This can be understood as follows, when we reduce the mean radius by $\rho$ , the normalization will only compensate the decreasing of the biggest radii, that is the radii with the smallest tail-index $\beta_1$, and the contribution of the other balss wille be negligible.

\bigskip

To ensure asymptotics make sense, it is necessary to consider configuration $\mu$ for which the limit processes are well defined. To that purpose, we introduce the following subset of $\Z$.
\begin{Def}
Let $1<\alpha\leq 2$ and $0<\beta_1\leq \beta_2$. The set $\ma$ ($\mcal$ when there is no ambiguity) consists of (signed) measures $\mu \in \Z$ such that there exist two real numbers $s$ and $t$ with $0<s<\beta_1 \leq \beta_2< t$ and a positive constant $C$ such that:
\begin{displaymath}
\int_{\mathbb{R}^d}\vert \mu(B(x,r))\vert^{\alpha}dx\leq C\left( r^{s}\wedge r^{t}\right),
\end{displaymath}
where $a\wedge b=\min(a,b)$.
\end{Def}

This definition is reminiscent of $\mb$ in \cite{Jcb}. The next proposition gathers some elementary properties that are easily generalized from that of $\mb$ in \cite{Jcb} (see the proof in the Appendix below).
\begin{Prop}  \label{propm}  \qquad
\begin{enumerate}
\item  \label{itlin}The set $\mcal$ is a linear subspace. Furthermore, if $g\in L^{\infty}(\Rd)$ and $\beta$ is a real-valued function on $\Rd$ such that $\forall x\in \Rd, \: \beta_1\leq \beta(x)\leq \beta_2$, then: 
\begin{equation} \label{maj}
\forall \mu \in \mcal,\: \I \vert \mu(B(x,r)) \vert^{\alpha}g(x)r^{-\beta(x)-1}dxdr <+\infty.
\end{equation} 
\item When $\alpha \leq \alpha'$, $\ma\subset\maa$.\label{itinc}
\item When $\beta_1\leq \beta_1'\leq \beta'_2\leq \beta_2$, $\ma\subset \mathcal{M}_{\alpha, \beta_1', \beta_2'}$.
\item When $\alpha \leq \alpha'$ and $\beta_1\leq \beta_1'\leq \beta'_2\leq \beta_2$, $\ma \subset \mathcal{M}_{\alpha', \beta_1', \beta_2'}$.
\item When $\beta_1>d$, $\mcal$ is included in the of subspace of diffuse measures.\label{itdif}
\end{enumerate}
\end{Prop}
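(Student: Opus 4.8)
The plan is to treat the five items in order, since item~\ref{itlin} and the elementary exponent bounds it produces are what the inclusions rely on, and the only point requiring a genuine argument is item~\ref{itdif}; the arguments parallel those for $\mb$ in \cite{Jcb}.

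For item~\ref{itlin}, linearity of $\mcal$ follows from linearity of $\Z$ together with the convexity inequality $(u+v)^{\alpha}\leq 2^{\alpha-1}(u^{\alpha}+v^{\alpha})$ for $u,v\geq 0$: if $\mu,\nu\in\mcal$ have parameters $(s_{\mu},t_{\mu},C_{\mu})$, $(s_{\nu},t_{\nu},C_{\nu})$ and $a,b\in\mathbb{R}$, then
\begin{displaymath}
\int_{\mathbb{R}^d}\vert (a\mu+b\nu)(B(x,r))\vert^{\alpha}\,dx\leq 2^{\alpha-1}\bigl(\vert a\vert^{\alpha}C_{\mu}(r^{s_{\mu}}\wedge r^{t_{\mu}})+\vert b\vert^{\alpha}C_{\nu}(r^{s_{\nu}}\wedge r^{t_{\nu}})\bigr),
\end{displaymath}
and one checks that the right-hand side is bounded by $C(r^{s}\wedge r^{t})$ with $s=\max(s_{\mu},s_{\nu})<\beta_1$, $t=\min(t_{\mu},t_{\nu})>\beta_2$ and $C=2^{\alpha-1}(\vert a\vert^{\alpha}C_{\mu}+\vert b\vert^{\alpha}C_{\nu})$, by distinguishing $r\geq 1$ (where $r^{s}\wedge r^{t}=r^{s}$) from $r\leq 1$ (where $r^{s}\wedge r^{t}=r^{t}$). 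For~\eqref{maj}, I would split $\int_{\mathbb{R}_+}$ into $\int_0^1$ and $\int_1^\infty$, bound $g(x)\leq\Vert g\Vert_{\infty}$, use $r^{-\beta(x)-1}\leq r^{-\beta_2-1}$ on $(0,1]$ and $r^{-\beta(x)-1}\leq r^{-\beta_1-1}$ on $[1,\infty)$, then apply Tonelli and the defining inequality of $\mcal$ to reduce finiteness to $\int_0^1 r^{t-\beta_2-1}\,dr<+\infty$ (true since $t>\beta_2$) and $\int_1^\infty r^{s-\beta_1-1}\,dr<+\infty$ (true since $s<\beta_1$).

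The three inclusions are then almost immediate. For item~\ref{itinc}, since $\vert\mu(B(x,r))\vert\leq\vert\mu\vert(\Rd)<+\infty$, one has $\vert\mu(B(x,r))\vert^{\alpha'}\leq\vert\mu\vert(\Rd)^{\alpha'-\alpha}\vert\mu(B(x,r))\vert^{\alpha}$ when $\alpha\leq\alpha'$, so integrating keeps the same $s,t$ and multiplies $C$ by $\vert\mu\vert(\Rd)^{\alpha'-\alpha}$. For the third item, if $\beta_1\leq\beta_1'\leq\beta_2'\leq\beta_2$, then any triple $(s,t,C)$ with $0<s<\beta_1\leq\beta_2<t$ also satisfies $0<s<\beta_1'\leq\beta_2'<t$, so the very same data witnesses membership in $\mathcal{M}_{\alpha,\beta_1',\beta_2'}$; and the fourth item is just the composition of the second and third.

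The substantive point is item~\ref{itdif}. First I note that the diffuse measures form a linear subspace of $\Z$ and that a signed measure $\mu$ is diffuse iff $\vert\mu\vert$ is, so it suffices to show every $\mu\in\mcal$ is atomless. Suppose not, say $\vert\mu\vert(\{y_0\})=a>0$; then $\vert\mu(\{y_0\})\vert=a$ as well, since $\mu^+$ and $\mu^-$ cannot both charge the single point $y_0$. For $\vert x-y_0\vert\leq r$ one has $\{y_0\}\subseteq B(x,r)\subseteq B(y_0,2r)$, hence
\begin{displaymath}
\vert\mu(B(x,r))\vert\geq\vert\mu(\{y_0\})\vert-\vert\mu\vert\bigl(B(x,r)\setminus\{y_0\}\bigr)\geq a-\vert\mu\vert\bigl(B(y_0,2r)\setminus\{y_0\}\bigr),
\end{displaymath}
and $\vert\mu\vert(B(y_0,2r)\setminus\{y_0\})=\vert\mu\vert(B(y_0,2r))-a\to 0$ as $r\to 0$ by downward continuity of the finite measure $\vert\mu\vert$. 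Thus for $r$ small enough $\vert\mu(B(x,r))\vert\geq a/2$ on $B(y_0,r)$, whence $\int_{\mathbb{R}^d}\vert\mu(B(x,r))\vert^{\alpha}\,dx\geq (a/2)^{\alpha}c_d r^{d}$, while the definition of $\mcal$ forces this integral to be at most $C(r^{s}\wedge r^{t})=Cr^{t}$ for $r\leq 1$. Dividing by $r^{d}$ and letting $r\to 0$ gives $(a/2)^{\alpha}c_d\leq 0$ because $t>\beta_2\geq\beta_1>d$, a contradiction; hence $\mu$ is diffuse. I expect the only mildly delicate step to be this isolation of an atom — the displayed lower bound on $\vert\mu(B(x,r))\vert$ and the appeal to continuity from above — while everything else is bookkeeping with the exponents $s$ and $t$.
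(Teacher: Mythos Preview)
Your proof is correct and follows essentially the same route as the paper's: the same convexity inequality and choice $s=\max(s_\mu,s_\nu)$, $t=\min(t_\mu,t_\nu)$ for linearity, the same split at $r=1$ with the bounds $r^{-\beta(x)-1}\leq r^{-\beta_2-1}$ on $(0,1]$ and $r^{-\beta(x)-1}\leq r^{-\beta_1-1}$ on $[1,\infty)$ for~\eqref{maj}, the same $\vert\mu\vert(\Rd)^{\alpha'-\alpha}$ trick for item~\ref{itinc}, and the same atom-isolation argument for item~\ref{itdif}. If anything, your treatment of item~\ref{itdif} is slightly more careful than the paper's: you explicitly justify $\vert\mu(\{y_0\})\vert=\vert\mu\vert(\{y_0\})$ via the Hahn decomposition and spell out the lower bound $\vert\mu(B(x,r))\vert\geq a-\vert\mu\vert(B(y_0,2r)\setminus\{y_0\})$, whereas the paper jumps directly to $\vert\mu(B(x,r))\vert\geq\vert\mu\vert(a)/2$.
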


\bigskip

The next Proposition is a direct application of Proposition 2.2 in \cite{Jcb} and gives some elegant properties of spaces $\mcal$:

\begin{Prop}\label{propgm}\quad
\begin{enumerate} 
\item The set $\mcal$ is closed under rotations, i.e. $\forall \mu \in \mcal$, $\forall \Theta \in \mathcal{O}(\Rd)$, $\Theta \mu \in \mcal$ where, for any Borelian set $A$ of $\Rd$, $\Theta\mu(A)=\mu(\Theta^{-1}A)$.
\item $\mcal$ is closed under dilatations, i.e. when $\mu\in \mcal$, for any $a\in \mathbb{R}^+$, $\mu_a\in \mcal$ where, for any Borelian subset $A$ of $\Rd$, $\mu_a(A)=\mu(a^{-1}A)$.
\end{enumerate}
\end{Prop}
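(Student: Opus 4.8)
The plan is to check, straight from the definition of $\mcal$, that the defining inequality $\int_{\Rd}|\mu(B(x,r))|^{\alpha}dx\le C(r^{s}\wedge r^{t})$ survives each of the two operations, with the exponents $s,t$ left unchanged and only the constant $C$ allowed to be rescaled. This is exactly the mechanism behind Proposition 2.2 of \cite{Jcb} for $\mb$: neither a rotation nor a dilatation interacts with the function $\beta$, so the constraint $0<s<\beta_1\le\beta_2<t$ is automatically kept, and everything reduces to tracking how balls and Lebesgue measure transform. I would also record in passing that $|\Theta\mu|(\Rd)=|\mu|(\Rd)$ and $|\mu_a|(\Rd)=|\mu|(\Rd)$, so membership in $\Z$ is preserved as well.

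First I would handle rotations. Fix $\mu\in\mcal$ with associated data $(s,t,C)$ and $\Theta\in\mathcal{O}(\Rd)$. Since $\Theta$ is a linear isometry it maps balls to balls, so $\Theta^{-1}B(x,r)=B(\Theta^{-1}x,r)$ and hence $\Theta\mu(B(x,r))=\mu(B(\Theta^{-1}x,r))$. The change of variables $y=\Theta^{-1}x$ has Jacobian of modulus $1$, so
\begin{displaymath}
\int_{\Rd}|\Theta\mu(B(x,r))|^{\alpha}\,dx=\int_{\Rd}|\mu(B(y,r))|^{\alpha}\,dy\le C\,(r^{s}\wedge r^{t}),
\end{displaymath}
which shows $\Theta\mu\in\mcal$ with the very same triple $(s,t,C)$.

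Next I would treat dilatations. For $a\in\mathbb{R}^+$ one checks that $a^{-1}B(x,r)=B(x/a,r/a)$, hence $\mu_a(B(x,r))=\mu(B(x/a,r/a))$, and the change of variables $y=x/a$ brings out a factor $a^{d}$:
\begin{displaymath}
\int_{\Rd}|\mu_a(B(x,r))|^{\alpha}\,dx=a^{d}\int_{\Rd}|\mu(B(y,r/a))|^{\alpha}\,dy\le a^{d}\,C\,\big((r/a)^{s}\wedge(r/a)^{t}\big).
\end{displaymath}
The remaining step is to put the right-hand side back into the prescribed shape: from $(r/a)^{s}\wedge(r/a)^{t}\le\max(a^{-s},a^{-t})\,(r^{s}\wedge r^{t})$ one gets the bound with constant $C'=C\,a^{d}\max(a^{-s},a^{-t})$ and unchanged exponents, hence $\mu_a\in\mcal$. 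This last rescaling of the majorizing function is the only point that is not a bare change of variables, and so it is where I would be most careful — though it is still entirely elementary. Alternatively, one can simply invoke Proposition 2.2 of \cite{Jcb} verbatim, since $\mb$ there and $\mcal$ here differ only in the role played by $\beta$, which is irrelevant for this argument.
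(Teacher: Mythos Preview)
Your proof is correct and is exactly the argument underlying Proposition 2.2 of \cite{Jcb}, which is all the paper invokes here (it gives no independent proof). In particular, your verification of $(r/a)^{s}\wedge(r/a)^{t}\le\max(a^{-s},a^{-t})\,(r^{s}\wedge r^{t})$ is the only nontrivial step, and it goes through case by case.
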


\bigskip

Observe that Dirac measures $\delta_y$, when $ y\in Supp(g)$, are not in $\mcal$. Absolutely continuous measures (with respect to the Lebesgue measure) $\mu(dx)=\phi(x)dx$ with $\phi \in L^1(\Rd)\cap L^{\alpha }(\Rd)$ will play a crucial role in the small-balls scaling described below. For such $\mu$, we shall abusively note $\mu \in  L^1(\Rd)\cap L^{\alpha }(\Rd)$. As a direct application of Proposition 2.3 in \cite{Jcb} we have:
\begin{Prop}
When $d<\beta_1\leq \beta_2<\alpha d$, any measure $\mu \in  L^1(\Rd)\cap L^{\alpha }(\Rd)$ belongs to $\mcal$.
\end{Prop}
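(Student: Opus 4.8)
The plan is to produce, for a measure $\mu(dx)=\phi(x)\,dx$ with $\phi\in L^1(\Rd)\cap L^\alpha(\Rd)$, an explicit pair of exponents $s,t$ and a constant $C$ witnessing $\mu\in\mcal$, relying only on Young's convolution inequality (this is precisely what underlies Proposition 2.3 of \cite{Jcb}). The starting point is the identity
\begin{displaymath}
\mu(B(x,r))=\int_{\Rd}\phi(y)\,1_{B(0,r)}(x-y)\,dy=\big(\phi\ast 1_{B(0,r)}\big)(x),
\end{displaymath}
valid because the Euclidean ball is symmetric, so that $\int_{\Rd}\vert\mu(B(x,r))\vert^\alpha\,dx=\Vert\phi\ast 1_{B(0,r)}\Vert_{L^\alpha(\Rd)}^\alpha$.

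Next I would estimate this $L^\alpha$ norm in the two complementary ways allowed by Young's inequality. From the case $L^\alpha\ast L^1\hookrightarrow L^\alpha$ one gets
\begin{displaymath}
\Vert\phi\ast 1_{B(0,r)}\Vert_{L^\alpha}\le\Vert\phi\Vert_{L^\alpha}\,\Vert 1_{B(0,r)}\Vert_{L^1}=c_d\,\Vert\phi\Vert_{L^\alpha}\,r^{d},
\end{displaymath}
hence $\int_{\Rd}\vert\mu(B(x,r))\vert^\alpha\,dx\le c_d^{\alpha}\Vert\phi\Vert_{L^\alpha}^{\alpha}\,r^{\alpha d}$; from the case $L^1\ast L^\alpha\hookrightarrow L^\alpha$ one gets
\begin{displaymath}
\Vert\phi\ast 1_{B(0,r)}\Vert_{L^\alpha}\le\Vert\phi\Vert_{L^1}\,\Vert 1_{B(0,r)}\Vert_{L^\alpha}=c_d^{1/\alpha}\,\Vert\phi\Vert_{L^1}\,r^{d/\alpha},
\end{displaymath}
hence $\int_{\Rd}\vert\mu(B(x,r))\vert^\alpha\,dx\le c_d\,\Vert\phi\Vert_{L^1}^{\alpha}\,r^{d}$.

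To conclude I would take $s=d$ and $t=\alpha d$: the hypotheses $d<\beta_1$ and $\beta_2<\alpha d$ (together with $\alpha>1$) give $0<s<\beta_1\le\beta_2<t$. For $r\le 1$ we have $r^{\alpha d}\le r^{d}$, so the first estimate reads $\int_{\Rd}\vert\mu(B(x,r))\vert^\alpha\,dx\le c_d^{\alpha}\Vert\phi\Vert_{L^\alpha}^{\alpha}\,(r^{s}\wedge r^{t})$; for $r\ge 1$ we have $r^{d}\le r^{\alpha d}$, so the second estimate reads $\int_{\Rd}\vert\mu(B(x,r))\vert^\alpha\,dx\le c_d\Vert\phi\Vert_{L^1}^{\alpha}\,(r^{s}\wedge r^{t})$. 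With $C=\max\big(c_d^{\alpha}\Vert\phi\Vert_{L^\alpha}^{\alpha},\,c_d\Vert\phi\Vert_{L^1}^{\alpha}\big)$ the inequality $\int_{\Rd}\vert\mu(B(x,r))\vert^\alpha\,dx\le C(r^{s}\wedge r^{t})$ holds for every $r>0$, i.e. $\mu\in\mcal$.

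There is no serious obstacle here: the statement is a soft consequence of Young's inequality. The only point worth noticing is that one genuinely needs both estimates — the $L^\alpha\ast L^1$ one controls small radii, where the relevant exponent is $\alpha d$, and the $L^1\ast L^\alpha$ one controls large radii, where it is $d$ — and that the interval $[d,\alpha d]$ brackets $[\beta_1,\beta_2]$ precisely under the stated conditions $d<\beta_1\le\beta_2<\alpha d$. Alternatively one may simply invoke Proposition 2.3 of \cite{Jcb} directly, since the space $\mcal$ depends only on $\alpha,\beta_1,\beta_2$ and not on the inhomogeneity data $g,\beta$.
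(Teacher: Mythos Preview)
Your proof is correct and is exactly the argument underlying Proposition~2.3 of \cite{Jcb}, which is all the paper invokes here (it gives no independent proof). The convolution identity together with the two instances of Young's inequality yielding the exponents $s=d$ and $t=\alpha d$ is precisely the intended route, and you correctly observe that the hypothesis $d<\beta_1\le\beta_2<\alpha d$ is what makes this pair admissible for $\mcal$.
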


\bigskip

The following elementary Lemma will be useful at several instance. It is proved in the Appendix.
\begin{Lem} \label{lem:majf}
Let $t>d$. For any $A>0$:
\begin{displaymath}
\int_0^A r^t \Vert f(\cdot, r)\Vert_\infty dr <+\infty.
\end{displaymath}
\end{Lem}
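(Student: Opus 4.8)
The plan is to split the integral at the threshold $A$ into two pieces and treat them separately. The upper piece $\int_1^A r^t \Vert f(\cdot,r)\Vert_\infty\, dr$ (assuming $A>1$; otherwise only the lower piece appears) is bounded because on the compact set $[1,A]$ the map $r\mapsto \Vert f(\cdot,r)\Vert_\infty$ is continuous by \eqref{cf1} and hence bounded, while $r^t$ is continuous too, so the integrand is bounded on $[1,A]$ and the integral is finite. For the lower piece $\int_0^1 r^t \Vert f(\cdot,r)\Vert_\infty\, dr$, I would use that $t>d$ implies $r^t\leq r^d$ on $[0,1]$, so this integral is at most $\int_0^1 r^d \Vert f(\cdot,r)\Vert_\infty\, dr$, which is finite by \eqref{cf2} (indeed the full integral $\int_{\mathbb{R}^+} r^d \Vert f(\cdot,r)\Vert_\infty\, dr$ converges).

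Combining the two bounds gives $\int_0^A r^t \Vert f(\cdot,r)\Vert_\infty\, dr<+\infty$, which is the claim.

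There is essentially no obstacle here; the only mild subtlety is that \eqref{cf2} controls $r^d$, not $r^t$ with $t>d$, so one must exploit that the discrepancy between $r^t$ and $r^d$ is harmless on the bounded interval $[0,1]$ (where $r^t\le r^d$) and that the remaining interval $[1,A]$ is compact, where continuity alone suffices. No use of \eqref{cf} or the tail behaviour of $f$ is needed, precisely because the domain of integration is bounded.
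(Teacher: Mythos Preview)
Your proof is correct and follows essentially the same approach as the paper: split the integral at $1$ (the paper writes $1\wedge A$ to cover both cases $A\le 1$ and $A>1$ at once), bound the lower piece via $r^t\le r^d$ and condition~\eqref{cf2}, and bound the upper piece using the continuity assumption~\eqref{cf1} on the compact interval.
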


We also recall the Lemma 3.1 of \cite{Jcb} which gives an estimate of the characteristic function of distribution in the domain of attraction of a stable law.
\begin{Lem}
Suppose $X$ is in the domain of attraction of an $\alpha$-stable law $S_\alpha(\sigma, \beta, 0)$ for some $\alpha>1$. Then
\begin{displaymath}\label{Lem:g}
\phi_X(\theta)-1-i\theta\mathbb{E}\left[X\right]\sim_0 -\sigma^{\alpha}\vert\theta\vert^\alpha\left(1-i\epsilon(\theta)\tan(\pi \alpha/2)b\right).
\end{displaymath}
Furthermore, there is some $C>0$ such that for any $\theta \in \mathbb{R}$, 
\begin{displaymath}
\left| \phi_X(\theta)-1-i\theta\mathbb{E}\left[X\right] \right| \leq K\vert \theta \vert^\alpha.
\end{displaymath}
\end{Lem}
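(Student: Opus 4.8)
The plan is to reduce to the classical local expansion of the characteristic function of a distribution in the domain of attraction of a stable law. Since $\alpha>1$, membership in the domain of attraction of $S_\alpha(\sigma,b,0)$ forces $\mathbb E|X|<+\infty$, so $\mathbb E[X]$ is well defined and, replacing $X$ by $X-\mathbb E[X]$, it suffices to treat the centered case and to prove $\phi_X(\theta)-1\sim_0-\sigma^\alpha|\theta|^\alpha(1-i\epsilon(\theta)\tan(\pi\alpha/2)b)$ together with $|\phi_X(\theta)-1|\le K|\theta|^\alpha$ on $\mathbb R$, where $\epsilon(\theta)=\mathrm{sgn}(\theta)$. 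When $\alpha=2$ this is merely the second-order Taylor expansion of $\phi_X$ at $0$: $X$ has finite variance $\sigma^2$, the skewness term vanishes since $\tan(\pi)=0$, and $|\phi_X(\theta)-1|\le\tfrac12\sigma^2\theta^2$ follows from $|e^{iu}-1-iu|\le u^2/2$. So from now on I would assume $1<\alpha<2$, and first record the tail description equivalent to (normal) attraction to $S_\alpha(\sigma,b,0)$ (see \cite{ST}, or Feller / Ibragimov--Linnik): there exist $c_+,c_-\ge0$ with $\mathbb P(X>x)\sim c_+x^{-\alpha}$ and $\mathbb P(X<-x)\sim c_-x^{-\alpha}$ as $x\to+\infty$, where $c_++c_-$ is fixed by $\sigma$ and $b=(c_+-c_-)/(c_++c_-)$.

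For the asymptotic equivalence I would write $\phi_X(\theta)-1=\mathbb E[e^{i\theta X}-1-i\theta X]$ and split into real and imaginary parts, $\mathbb E[\cos(\theta X)-1]$ and $\mathbb E[\sin(\theta X)-\theta X]$. For the real part, $1-\cos$ is even and nonnegative, and integrating by parts against $\mathbb P(|X|>x)\sim(c_++c_-)x^{-\alpha}$ and invoking a Karamata/Abelian theorem for Fourier-type integrals gives $\mathbb E[\cos(\theta X)-1]\sim-\mathrm{const}\cdot(c_++c_-)|\theta|^\alpha$; for the imaginary part the compensator $-\theta X$ (legitimate because $\alpha>1$) makes $\sin(\theta x)-\theta x$ integrable against the one-sided tail, and the same Abelian argument applied to $\mathbb P(X>x)-\mathbb P(X<-x)$ produces the skewness term with its $\tan(\pi\alpha/2)b$ factor. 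Matching the resulting constants with the known Lévy--Khintchine exponent $\log\phi_{S_\alpha}(\theta)=-\sigma^\alpha|\theta|^\alpha(1-i\epsilon(\theta)\tan(\pi\alpha/2)b)$ identifies $c_++c_-$ and yields the claim; alternatively one can bypass the constant-chasing by starting from $\phi_X(\theta/a_n)^n\to\phi_{S_\alpha}(\theta)$ with $a_n\sim n^{1/\alpha}$, deducing $n\log\phi_X(\theta/a_n)\to\log\phi_{S_\alpha}(\theta)$, hence $\log\phi_X(u)\sim\log\phi_{S_\alpha}(u)$ along $u=\theta/a_n$, extending to all $u\to0$ using regular variation of $a_n$, and finishing with $\log\phi_X(u)\sim\phi_X(u)-1$.

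For the global bound, fix $\theta\ne0$ and split the expectation according to whether $|X|\le1/|\theta|$ or not: on $\{|X|\le1/|\theta|\}$ use $|e^{iu}-1-iu|\le u^2/2$ and on the complement $|e^{iu}-1-iu|\le 2|u|$, so that
\[
|\phi_X(\theta)-1|\le\tfrac{\theta^2}{2}\,\mathbb E\!\left[X^2\mathbf{1}_{\{|X|\le1/|\theta|\}}\right]+2|\theta|\,\mathbb E\!\left[|X|\mathbf{1}_{\{|X|>1/|\theta|\}}\right].
\]
By Karamata's theorem applied to $\mathbb P(|X|>x)\sim(c_++c_-)x^{-\alpha}$ with $1<\alpha<2$, the truncated second moment is $O(T^{2-\alpha})$ and the truncated first moment of the large values is $O(T^{1-\alpha})$, so with $T=1/|\theta|$ the right-hand side is $O(|\theta|^\alpha)$ as $\theta\to0$; in particular the bound holds with a fixed constant on a neighbourhood of $0$. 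Away from $0$ it is trivial: on a compact set not containing $0$, $\theta\mapsto\phi_X(\theta)-1$ is continuous hence bounded while $|\theta|^\alpha$ is bounded below, and for $|\theta|\ge1$ one simply has $|\phi_X(\theta)-1|\le 2+|\theta|\,\mathbb E|X|\le(2+\mathbb E|X|)|\theta|^\alpha$ because $\alpha\ge1$. Taking $K$ to be the largest of the three constants concludes.

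The main obstacle is the passage from the domain-of-attraction hypothesis to the precise local equivalence $\sim_0$: pinning down the exact constant and, above all, the sign/skewness term in the imaginary part, which is exactly where the balance condition and the $\tan(\pi\alpha/2)$ factor enter and where the Abelian theorem (or the subsequential-limit argument plus a monotonicity/regular-variation extension) is doing the real work. The global inequality, by contrast, is a routine truncation once the regular variation of the tails is available.
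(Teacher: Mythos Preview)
The paper does not prove this lemma: it is explicitly introduced as a recall of Lemma~3.1 of \cite{Jcb}, so there is no in-paper argument to compare your proposal against. Your sketch is a reasonable reconstruction of the standard proof and captures the two essential ingredients --- the Abelian passage from regularly varying tails to the local expansion of $\phi_X$ near $0$, and the truncation inequality $|e^{iu}-1-iu|\le\min(u^2/2,2|u|)$ combined with Karamata-type moment bounds for the global $|\theta|^\alpha$ estimate.

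A few points worth tightening. First, the stated equivalence only holds under \emph{normal} attraction (normalizing sequence $a_n\sim c\,n^{1/\alpha}$); under general attraction a slowly varying factor would contaminate both the asymptotic and the global bound, so this hypothesis should be made explicit rather than parenthetical. Second, in the $\alpha=2$ case the variance of $X$ is $2\sigma^2$ in the Samorodnitsky--Taqqu parametrization of $S_2(\sigma,b,0)$, not $\sigma^2$, so your constants there need adjusting. Third, in your alternative route via $\phi_X(\theta/a_n)^n\to\phi_{S_\alpha}(\theta)$, the step ``extending to all $u\to0$ using regular variation of $a_n$'' is exactly where the substance lies: you need that for any $u\to0$ one can choose $n=n(u)$ with $u\,a_n$ bounded away from $0$ and $\infty$, together with continuity of $\log\phi_{S_\alpha}$, to interpolate from the sequential limit to the full one. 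None of these are fatal gaps; the overall strategy is the standard one found in the cited reference.
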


\begin{Rq}
In the following sections, the convergences obtained are finite-dimensional convergences and $\stackrel{\mathcal{A}}{\longrightarrow}$ denotes the finite-dimensional convergence in a subspace $\mathcal{A}$ of $\Z$, i.e. $M_\rho \stackrel{\mathcal{A}}{\longrightarrow} M$ means
\begin{displaymath}
\forall n, \; \forall \mu_1, ...,\mu_n \in \mathcal{A}, \; \mathcal{L}\left(M_\rho(\mu_1),...,M_\rho(\mu_n)\right)\rightarrow \mathcal{L}\left(M(\mu_1),...,M(\mu_n)\right).
\end{displaymath}
\end{Rq}

\bigskip
\subsection{Large-balls scaling}

In this section, we study the fluctuations of $\m$ under the large-balls scaling, that is when $\lambda(\rho)\rho^{\beta_1}\to+\infty$, $\rho\to 0$. The limiting field obtained expresses as an $\alpha$-stable integral.

\bigskip

\begin{Thm}\label{thm:lbs}
Suppose that $\lambda(\rho)\rho^{\beta_1} \to + \infty$ when $\rho \to 0$. Let $n(\rho)=\lambda(\rho)^{1/\alpha} \rho^{\beta_1/\alpha}$ and suppose that $B_1=\left\lbrace x\in \Rd: \: \beta(x)=\beta_1 \right\rbrace$ has a non-zero Lebesgue measure, then we have:
\begin{displaymath}
n(\rho)^{-1}\Mr(\cdot) \stackrel{\mcal}{\longrightarrow} Z(\cdot),
\end{displaymath}
where $Z(\mu)=\I \mu(B(x,r))M_{\alpha}(dx,dr)$ is a stable integral with respect to the $\alpha$-stable measure $M_{\alpha}$ with control measure $\sigma^{\alpha}g(x)1_{B_1}(x)r^{-\beta_1-1}dxdr$ and constant skewness function $b$ depending on $G$.
\end{Thm}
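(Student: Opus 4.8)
The plan is to reduce the statement to the convergence of one--dimensional characteristic functions and then pass to the limit in the explicit Lévy exponent \eqref{fcm}. First I would reduce the announced finite--dimensional convergence $n(\rho)^{-1}\Mr(\cdot)\stackrel{\mcal}{\longrightarrow}Z(\cdot)$ to the one--dimensional convergence in law $n(\rho)^{-1}\Mr(\mu)\to Z(\mu)$ for each fixed $\mu\in\mcal$. This is legitimate because $\mcal$ is a linear subspace (Proposition~\ref{propm}, item~\ref{itlin}) while both $\mu\mapsto n(\rho)^{-1}\Mr(\mu)$ and $\mu\mapsto Z(\mu)$ are linear in $\mu$: by the Cramér--Wold device, a generic linear combination $\sum_j a_j\, n(\rho)^{-1}\Mr(\mu_j)$ is $n(\rho)^{-1}\Mr$ evaluated at $\sum_j a_j\mu_j\in\mcal$, and likewise on the limiting side. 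By Lévy's continuity theorem it then suffices to show, for every $\theta\in\mathbb{R}$, that $\log\varphi_{n(\rho)^{-1}\Mr(\mu)}(\theta)$ tends to
\begin{displaymath}
\log\varphi_{Z(\mu)}(\theta)=-\sigma^{\alpha}|\theta|^{\alpha}\I|\mu(B(x,r))|^{\alpha}\Big(1-i\,\mathrm{sgn}\big(\theta\mu(B(x,r))\big)\tan(\tfrac{\pi\alpha}{2})b\Big)g(x)\,1_{B_1}(x)\,r^{-\beta_1-1}\,dx\,dr,
\end{displaymath}
which is the characteristic exponent of the stated stable integral (see \cite{ST}); it is finite by \eqref{maj} applied with the constant exponent $\beta_1$ and with $g\,1_{B_1}\in L^{\infty}(\Rd)$ in place of $g$ (when $\alpha=2$ the $\tan$ term vanishes and $Z(\mu)$ is Gaussian).

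Next I would identify the pointwise limit of the integrand of \eqref{fcm}. Since $n(\rho)^{\alpha}=\lambda(\rho)\rho^{\beta_1}$, for $(x,r)$ with $\mu(B(x,r))\neq 0$ I write, with $v:=\theta\, n(\rho)^{-1}\mu(B(x,r))$,
\begin{displaymath}
\psi_G(v)\,\rho^{-1}f_\rho(x,r/\rho)=\big|\theta\mu(B(x,r))\big|^{\alpha}\,\frac{\psi_G(v)}{|v|^{\alpha}}\,\frac{\rho^{-1}f_\rho(x,r/\rho)}{\lambda(\rho)\rho^{\beta_1}}.
\end{displaymath}
Applying to $G$ the characteristic--function estimate recalled above from \cite{Jcb}, one has $\psi_G(v)/|v|^{\alpha}\to-\sigma^{\alpha}\big(1-i\,\mathrm{sgn}(v)\tan(\tfrac{\pi\alpha}{2})b\big)$ as $v\to 0$ and $|\psi_G(v)|\leq K|v|^{\alpha}$ for all $v$; as $n(\rho)\to+\infty$, $v\to 0$ with the sign of $\theta\mu(B(x,r))$, so the middle factor converges to $-\sigma^{\alpha}\big(1-i\,\mathrm{sgn}(\theta\mu(B(x,r)))\tan(\tfrac{\pi\alpha}{2})b\big)$. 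For the last factor, \eqref{cfr} (uniform in $x$) gives, for each fixed $r>0$, $\rho^{-1}f_\rho(x,r/\rho)/(\lambda(\rho)\rho^{\beta_1})\sim_{\rho\to 0}\rho^{\beta(x)-\beta_1}g(x)r^{-\beta(x)-1}$, which converges to $g(x)1_{B_1}(x)r^{-\beta_1-1}$ by \eqref{cbeta}. Hence the integrand of \eqref{fcm} converges, for a.e.\ $(x,r)$, to the integrand of $\log\varphi_{Z(\mu)}(\theta)$.

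Finally I would justify the exchange of limit and integral, and this is where the main obstacle lies. Fix $\delta\in(0,1]$ and split $\mathbb{R}^d\times\mathbb{R}_+$ into $\{r\geq\delta\}$ and $\{0<r<\delta\}$. On $\{r\geq\delta\}$ the uniform equivalence \eqref{cfr} provides $R_0\geq 1$ with $f_\rho(x,s)\leq 2\lambda(\rho)\|g\|_\infty s^{-\beta_1-1}$ for $s\geq R_0$, hence $\rho^{-1}f_\rho(x,r/\rho)/(\lambda(\rho)\rho^{\beta_1})\leq 2\|g\|_\infty r^{-\beta_1-1}$ once $\rho\leq\delta/R_0$; together with $|\psi_G(v)|\leq K|v|^{\alpha}$ this dominates the integrand there by $2K|\theta|^{\alpha}\|g\|_\infty|\mu(B(x,r))|^{\alpha}r^{-\beta_1-1}1_{\{r\geq\delta\}}$, which is $dx\,dr$--integrable since $\mu\in\mcal$, so dominated convergence handles $\{r\geq\delta\}$. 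On $\{0<r<\delta\}$ there is no $\rho$--uniform pointwise control, because $f_\rho$ is evaluated at the large argument $r/\rho\to+\infty$; instead I would use $|\psi_G(v)|\leq K|v|^{\alpha}$, then \eqref{cfr1}, then the change of variables $r=\rho s$ together with $\int_{\Rd}|\mu(B(x,\rho s))|^{\alpha}dx\leq C(\rho s)^{t_0}$ (valid here since $\rho s<\delta\leq 1$, with $0<s_0<\beta_1\leq\beta_2<t_0$ the exponents attached to $\mu\in\mcal$), to bound the modulus of the $\{0<r<\delta\}$--contribution by $KC|\theta|^{\alpha}\rho^{\,t_0-\beta_1}\int_0^{\delta/\rho}s^{t_0}\|f(\cdot,s)\|_\infty\,ds$. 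By Lemma~\ref{lem:majf} the part of this integral over $(0,1)$ is finite, while on $(1,\delta/\rho)$ the tail bound $\|f(\cdot,s)\|_\infty\leq 2\|g\|_\infty s^{-\beta_1-1}$ (from \eqref{cf} for large $s$, and continuity \eqref{cf1} on the remaining compact range) and $t_0>\beta_1$ give a contribution $O\big((\delta/\rho)^{t_0-\beta_1}\big)$; thus the whole bound is $\leq C'\delta^{t_0-\beta_1}+o_\rho(1)$. Combining, $\limsup_{\rho\to 0}\big|\log\varphi_{n(\rho)^{-1}\Mr(\mu)}(\theta)-\log\varphi_{Z(\mu)}(\theta)\big|$ is at most $C'\delta^{t_0-\beta_1}$ plus the $\{r<\delta\}$--part of the (finite) integral defining $\log\varphi_{Z(\mu)}(\theta)$, and letting $\delta\to 0$ concludes. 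The hard point is thus the small--radius region: the normalization $n(\rho)$ only compensates the tail of the radius distribution, and it is precisely the gain $\rho^{\,t_0-\beta_1}\to 0$ provided by the $\mcal$--estimate that absorbs the divergence of $\int_0^{\delta/\rho}s^{t_0}\|f(\cdot,s)\|_\infty\,ds$ and annihilates the contribution of the small balls.
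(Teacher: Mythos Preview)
Your argument is correct and rests on exactly the same ingredients as the paper's proof: the characteristic--function estimate for $\psi_G$ (Lemma~\ref{Lem:g}), the uniform tail asymptotic \eqref{cfr}, the envelope \eqref{cfr1}, the $\mcal$--bound $\int|\mu(B(x,r))|^{\alpha}dx\le C(r^{s}\wedge r^{t})$, and Lemma~\ref{lem:majf}. The organization, however, differs in two respects that are worth recording.

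First, the paper proceeds in two stages: it integrates the uniform equivalence \eqref{eqg} to pass from $\psi_G$ to its stable tangent (obtaining \eqref{pr1eq1}), and only then replaces the scaled density $\rho^{-1}f_\rho(x,r/\rho)/n(\rho)^{\alpha}$ by its limit via explicit $\varepsilon$--$A$ arguments. You instead factor the full integrand once and take the pointwise limit of the product, which lets a single dominated--convergence argument on $\{r\ge\delta\}$ absorb both limits simultaneously. Second, the paper cuts at the \emph{moving} threshold $r=A\rho$ (so that \eqref{cfr} applies directly on $[A\rho,\infty)$) and treats $B_1$ and $B_1^c$ separately because the limiting density carries the indicator $1_{B_1}$. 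Your cut at a \emph{fixed} $\delta$ avoids this: on $\{r\ge\delta\}$ the uniform bound $f_\rho(x,s)\le 2\lambda(\rho)\|g\|_\infty s^{-\beta_1-1}$ (valid for $s\ge R_0$, hence for $\rho\le\delta/R_0$) gives a $\rho$--free dominating function $|\mu(B(x,r))|^{\alpha}r^{-\beta_1-1}$ that works on all of $\Rd$, so no $B_1$/$B_1^c$ split is needed. The price you pay is an extra $\delta\to 0$ step at the end, which your bound $C'\delta^{t_0-\beta_1}$ on the small--radius piece handles cleanly. Both routes isolate the same ``hard point'': the small--radius contribution is controlled by the gain $\rho^{t_0-\beta_1}$ coming from $\mu\in\mcal$, combined with Lemma~\ref{lem:majf} (paper) or Lemma~\ref{lem:majf} plus the explicit tail bound on $\|f(\cdot,s)\|_\infty$ (your version). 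Your packaging is slightly more economical; the paper's is more modular, which is convenient since essentially the same density--replacement step is reused verbatim in the proofs of Theorems~\ref{thm:is} and~\ref{thm:sbs}.
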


\begin{Rq}
The stochastic integral $Z(\mu)$ is well defined for $\mu \in \mcal$ since $$\I \vert \mu(B(x,r)) \vert^{\alpha}g(x)1_{B_1}(x)r^{-\beta_1-1}dxdr<+\infty,$$ see \cite{ST}.
\end{Rq}

\begin{proof}

Our aim is to show that, for $\mu \in \mcal$, the $\log$-characteristic function of $n(\rho)^{-1}\Mr(\mu) $ given in \eqref{fcm} converges when $\rho \to 0$ to the $\log$-characteristic function of the random variable $Z(\mu)$: 
\begin{equation}\label{fcz}
\log\left(\varphi_{Z(\mu)}(\theta)\right)=-\sigma^{\alpha} \! \! \! \I \! \! \!  \vert \theta \mu(B(x,r))\vert^{\alpha}\left(1-i\epsilon(\theta\mu(B(x,r)))\tan(\pi\alpha/2)b\right)g(x)\frac{1_{B_1}(x)}{r^{\beta_1+1}}dxdr, 
\end{equation}
where $\epsilon(u)=+1$ if $u>0$, $\epsilon(u)=-1$ if $u<0$ and $\epsilon(0)=0$ (see \cite{ST}).
\\
Since by hypothesis, $n(\rho)\to +\infty$, we apply Lemma \eqref{Lem:g} which yields:
\begin{equation}\label{eqg}
\lambda(\rho)\psi_G\left( \theta n(\rho)^{-1}\mu (B(x,r))\right)\sim -\sigma^{\alpha}\rho^{-\beta_1}\vert \theta \mu(B(x,r))\vert^{\alpha} \bigg( 1-i \epsilon\big(\theta\mu(B(x,r))\big)\tan(\pi \alpha /2)b\bigg).
\end{equation}
\\
Since $\vert \theta n(\rho)^{-1}\mu(B(x,r))\vert\leq \theta n(\rho)^{-1}\vert\mu\vert(\mathbb{R}^d)$, the equivalence \eqref{eqg} is uniform both in $x$ and $r$ and can be integrated to obtain
\begin{equation}\label{pr1eq1}
\begin{split}
&\I \psi_G\left( n^{-1}(\rho)\theta\mu(B(x,r))\right)\rho^{-1}f_{\rho}(x,r/\rho)dxdr \\
\sim &-\sigma^{\alpha}\vert \theta \vert^{\alpha}\I \vert\mu(B(x,r))\vert^{\alpha}\left(1-i\epsilon(\theta\mu(B(x,r)))\tan(\pi\alpha/2)b\right)\frac{f_{\rho}(x,r/\rho)}{\rho n(\rho)^{\alpha}}dxdr,
\end{split}
\end{equation}
when $\rho \to 0$.

\bigskip

We now check that the right-hand side of \eqref{pr1eq1}  is equivalent as $\rho$ goes to $0$ to the $\log$-characteristic function \eqref{fcz} of $Z(\mu)$. But from \eqref{cfr} and by definition of $n(\rho)$, for all $x\in \Rd$ 

$$\frac{f_{\rho}(x,r/\rho)}{\rho n(\rho)^{\alpha}}\sim \rho^{\beta(x)-\beta_1}\frac{g(x)}{r^{\beta(x)+1}}\to \frac{g(x)1_{B_1}(x)}{r^{\beta_1+1}}, \; \rho \to 0$$ and we have to consider two cases apart: $x\in B_1$ and $x\notin B_1$.

\bigskip

First, we focus on the case $x\in B_1$. For big radii, we are able to replace the measure $\frac{f_{\rho}(x,r/\rho)}{\rho n(\rho)^{\alpha}}dxdr$ by its equivalent. Indeed, condition \eqref{cfr} can be written:
\begin{equation}\label{cfbis}
\forall \varepsilon>0, \: \exists A>0, \: \forall r>A, \: \forall x \in \Rd, \; \forall \rho >0, \: \bigg\vert \frac{f_\rho(x,r)}{\lambda(\rho)}-\frac{g(x)}{r^{\beta_1+1}}\bigg\vert \leq \varepsilon \frac{g(x)}{r^{\beta_1+1}}.
\end{equation}
A change of variable allows us to rewrite \eqref{cfbis} as follows
\begin{equation}\label{cfter}
\forall \varepsilon >0,\: \exists A>0,\: \forall \rho>0, \: \forall r>A\rho, \: \forall x \in B_1, \: \left| \frac{f_{\rho}(x,r/\rho)}{\rho n(\rho)^{\alpha}}- \frac{g(x)}{r^{\beta_1+1}}\right| \leq \varepsilon \frac{g(x)}{r^{\beta_1+1}}.
\end{equation}
Integrating \eqref{cfter} over $B_1$ yields
\begin{multline}\label{eq1}
\forall \varepsilon >0,\: \exists A>0,\: \forall \rho>0, \: \forall r>A\rho, \\
 \left| F_1(r,\rho)-F_2(r)\right|\leq \varepsilon (1-\tan(\pi\alpha/2)b)\int_{B_1}  \vert\mu(B(x,r))\vert^{\alpha}\frac{g(x)}{r^{\beta_1+1}}dx,
\end{multline}
where

$$F_1(r, \rho)=\int_{B_1} \! \! \vert\mu(B(x,r))\vert^{\alpha}\left(1-i\epsilon(\theta\mu(B(x,r)))\tan(\pi\alpha/2)b\right) \frac{f_{\rho}(x,r/\rho)}{\rho n(\rho)^{\alpha}}dx, $$ $$F_2(r)= \int_{B_1} \! \! \vert\mu(B(x,r))\vert^{\alpha}\left(1-i\epsilon(\theta\mu(B(x,r)))\tan(\pi\alpha/2)b\right)\frac{g(x)}{r^{\beta_1+1}}dx$$ and $$F_3(r)=\left(1-\tan(\pi\alpha/2)b\right)\int_{B_1}\vert\mu(B(x,r))\vert^{\alpha}\frac{g(x)}{r^{\beta_1+1}}dx.$$
\\
Since $\mu\in \mcal$ and $ g\in L^{\infty}(\Rd)$, using Point \ref{itlin} of Proposition \ref{propm} with $\beta(x)=\beta_1$:
\begin{equation}\label{ineq1}
\int_{\mathbb{R}^+}\vert F_2(r)\vert dr\leq\int_{\mathbb{R}^+}\vert F_3(r)\vert dr<+\infty .
\end{equation}
\\
From \eqref{eq1} and \eqref{ineq1}, we deduce:
\begin{equation}\label{ineqbr}
\forall \varepsilon>0, \: \exists A>0, \: \forall \rho >0,  \int_{A\rho}^{+\infty} \left| F_1(r,\rho)-F_2(r)\right|dr \leq \varepsilon\int_{\mathbb{R}^+}\vert F_3(r)\vert dr.
\end{equation}

 \bigskip

We show now that the integral over small radii is negligible. Let $\varepsilon>0$ and let $\rho >0$ such that $A\rho<1$ (where $A$ is given by \eqref{eq1}). Since $\mu \in \mcal$, there exist $t>\beta_1$ and a constant $C$ such that:
\begin{align}
\left|\int_0^{A\rho} F_1(r,\rho)dr\right|&\leq \big(1-b\tan(\pi \alpha/2)\big) \int_0^{A\rho} \int_{B_1}\vert \mu(B(x,r))\vert^{\alpha} \frac{f_{\rho}(x,r/\rho)}{\rho n(\rho)^{\alpha}}dxdr \notag \\ 
&\leq \frac{\big(1-b\tan(\pi \alpha/2)\big)}{\rho n(\rho)^{\alpha}} \int_0^{A\rho} \int_{B_1}\vert \mu(B(x,r))\vert^{\alpha}dx \Vert f_\rho(\cdot,r/\rho)\Vert_{\infty}dr \notag \\
&\leq \frac{\big(1-b\tan(\pi \alpha/2)\big)}{\rho^{\beta_1+1} \lambda(\rho)}C\int_0^{A\rho} r^t \lambda(\rho)\Vert f(\cdot,r/\rho)\Vert_{\infty}dr \notag \\
&\leq  \big(1-b\tan(\pi \alpha/2)\big) C \rho^{t-\beta_1} \int_0^A r^t\Vert f(\cdot,r)\Vert_{\infty}dr \label{majF1}\\
& \longrightarrow 0 \label{cvF1}
\end{align}
when $\rho \to 0$ using Lemma \ref{lem:majf} and \eqref{cfr1}.
\\
Thus, using \eqref{ineq1}, \eqref{ineqbr} and \eqref{cvF1}, we have
\begin{displaymath}
\begin{split}
\bigg\vert\int_{\mathbb{R}} F_1(r,\rho)dr-\int_{\mathbb{R}}F_2(r)dr\bigg\vert \leq &\int_{A\rho}^{+\infty} \left| F_1(r,\rho)-F_2(r)\right|dr + \left|\int_0^{A\rho} F_1(r,\rho)dr\right| +\left|\int_0^{A\rho} F_2(r)dr\right|\\
\leq  &\varepsilon\int_{\mathbb{R}^+}\vert F_3(r)\vert dr+\left|\int_0^{A\rho} F_1(r,\rho)dr\right| +\left|\int_0^{A\rho} F_2(r)dr\right|\\
\leq & K \varepsilon,
\end{split}
\end{displaymath}
for $\rho$ small enough. To conclude we have shown so far that:
\begin{displaymath}
\lim_{\rho \to 0} \int_{\mathbb{R}^+} F_1(r,\rho)dr= \int_{\mathbb{R}^+}F_2(r)dr.
\end{displaymath}
\\
We deal now with the case $x\notin B_1$ but apply the same arguments with slight changes. Set $$\widetilde{F}_1(r,\rho)=\int_{B_1^c}  \vert\mu(B(x,r))\vert^{\alpha}\bigg(1-i\epsilon\big(\theta\mu(B(x,r))\big)\tan(\pi\alpha/2)b\bigg)\frac{f_{\rho}(x,r/\rho)}{\rho n(\rho)^{\alpha}}dx,$$  $$\widetilde{F}_2(r,\rho)=\int_{B_1^c}\vert\mu(B(x,r))\vert^{\alpha}\bigg(1-i\epsilon\big(\theta\mu(B(x,r))\big)\tan(\pi\alpha/2)b\bigg)\frac{g(x)\rho^{\beta(x)-\beta_1}}{r^{\beta(x)+1}}dx$$ and $$\tilde{F}_3(r,\rho)=\big(1-b\tan(\pi\alpha/2)\big)\int_{B_1^c} \vert\mu(B(x,r))\vert^{\alpha}\frac{g(x)\rho^{\beta(x)-\beta_1}}{r^{\beta(x)+1}}dx$$ 
\\
For $\rho<1$, we have:
\begin{equation}\label{ineq2}
\left|\widetilde{F}_2(r,\rho)\right| \leq \left|\widetilde{F}_3(r,\rho)\right|\leq \big(1-b\tan(\pi\alpha/2)\big)\int_{\Rd} \vert\mu(B(x,r))\vert^{\alpha}g(x)r^{-\beta(x)-1}dx
\end{equation}
\\
and then, since $\mu\in\mcal$, using again Point \ref{itlin} of Proposition \ref{propm},  
\begin{align}\label{ineq2bis}
\int_{\mathbb{R}^+} \vert \widetilde{F}_2(r,\rho) \vert dr &\leq \int_{\mathbb{R}^+} \vert \widetilde{F}_3(r,\rho) \vert dr\notag \\
&\leq\big(1-b\tan(\pi\alpha/2)\big)\int_{\mathbb{R}^+}\int_{\Rd} \vert\mu(B(x,r))\vert^{\alpha}g(x)r^{-\beta(x)-1}dxdr<+\infty.
\end{align}
\\
Using Condition \eqref{cfr} with a change of variable and integrating over $\left[A\rho, +\infty\right[\times B_1^c$, we obtain:
\begin{displaymath}
\forall \varepsilon >0, \: \exists A>0, \: \forall \rho >0, \: \int_{A\rho}^{+\infty} \left| \widetilde{F}_1(r,\rho)-\widetilde{F}_2(r,\rho)\right|dr\leq \varepsilon \int_{\mathbb{R}^+}\vert \widetilde{F}_3(r,\rho) \vert dr.
\end{displaymath}
\\
Let $\varepsilon>0$ and let $\rho >0$ such that $A\rho<1$. Like in \eqref{majF1} above for $F_1$, we obtain
\begin{displaymath}
\begin{split}
\left|\int_0^{A\rho} \widetilde{F}_1(r,\rho)dr\right|&\leq \big(1-b\tan(\pi \alpha/2)\big) C \rho^{t-\beta_1}  \int_0^{ A} r^{t}\Vert f(\cdot,r)\Vert_{\infty}dr \\
&\longrightarrow 0, \; \rho \to 0\\
\end{split}
\end{displaymath}
using Lemma \ref{lem:majf}. This shows that the integral over small radii is negligible.
\\
Thus:
\begin{displaymath}
\begin{split}
&\bigg\vert\int_{\mathbb{R}} \widetilde{F}_1(r,\rho)dr-\int_{\mathbb{R}}\widetilde{F}_2(r,\rho)dr\bigg\vert \\
\leq &\int_{A\rho}^{+\infty} \left| \widetilde{F}_1(r,\rho)-\widetilde{F}_2(r,\rho)\right|dr + \left|\int_0^{A\rho} \widetilde{F}_1(r,\rho)dr\right| +\left|\int_0^{A\rho} \widetilde{F}_2(r,\rho)dr\right|\\
\leq & \varepsilon \int_{\mathbb{R}^+} \vert \widetilde{F}_3(r,\rho)\vert dr +\left|\int_0^{A\rho} \widetilde{F}_1(r,\rho)dr\right| +\left|\int_0^{A\rho} \widetilde{F}_2(r)dr\right|\\
\leq &K \varepsilon,
\end{split}
\end{displaymath}
for $\rho$ small enough. As a consequence, $\int_{\mathbb{R}^+}\widetilde{F}_1(r,\rho) dr$ and $\int_{\mathbb{R}^+}\widetilde{F}_2
(r,\rho)dr$ have the same limit when $\rho \to 0$ and we show now that this limit is $0$. Since $x\in B_1^c$, \begin{displaymath}
\lim_{\rho\to 0}(B(x,r))\vert^{\alpha}\bigg(1-i\epsilon\big(\theta\mu(B(x,r))\big)\tan(\pi\alpha/2)b\bigg)g(x)\frac{\rho^{\beta(x)-\beta_1}}{r^{\beta(x)+1}}= 0.
\end{displaymath}
Moreover, for $\rho<1$ 
\begin{multline}\label{dom}
\vert\mu(B(x,r))\vert^{\alpha}\bigg(1-i\epsilon\big(\theta\mu(B(x,r))\big)b\tan(\pi\alpha/2)\bigg)g(x)\frac{\rho^{\beta(x)-\beta_1}}{r^{\beta(x)+1}}\\
\leq \big(1-b\tan(\pi\alpha /2)\big) \vert\mu(B(x,r)))\vert^{\alpha}g(x)r^{-\beta(x)-1}.
\end{multline}
But for $\mu \in \mcal$, Proposition \eqref{propm}.\eqref{itlin} ensures that the right-hand side in \eqref{dom} is integrable over $\mathbb{R}^+\times B_1^c$. Dominated convergence Theorem entails
\begin{displaymath}
\lim_{\rho \to 0}=\int_{\mathbb{R}^+}\int_{B_1^c} \vert\mu(B(x,r))\vert^{\alpha}\left(1-i\epsilon(\theta\mu(B(x,r)))\tan(\pi\alpha/2)b\right)g(x)\frac{\rho^{\beta(x)-\beta_1}}{r^{\beta(x)+1}} dxdr= 0.
\end{displaymath}

\bigskip

Combining it all, we have proved that the $\log$-characteristic function of $n(\rho)^{-1}\Mr(\mu) $ converges to the $\log$-characteristic function of $Z(\mu)$ given in $\eqref{fcz}$. 

\bigskip

We now use the Cramér-Wold device and the linearity of the fields $\m$ and $Z$ to derive the convergence of the finite-dimensional distributions from the one dimensional convergence.

\end{proof}

\bigskip

In the next proposition, we give properties of the limit field $Z$ for particular choices of $g$ and $\beta$. They are deduced from the invariance by rotation of the Lebesgue measure, the self-similarity of stable integral, the global invariance of the balls and Proposition \ref{propgm}.

\begin{Prop} \quad \label{propZ}
\begin{enumerate}
\item \label{propZ1} When $g$ is radial (i.e. $g(x)=g(\Vert x\Vert)$) and $B_1$ is invariant by rotation, the field $Z$ is isometric, i.e.:
\begin{displaymath}
\forall \mu \in \mcal, \: \forall \Theta \in \mathcal{O}(\Rd), \: Z(\Theta \mu)\stackrel{fdd}{=} Z(\mu).
\end{displaymath}
\item \label{propZ2} Suppose there exists $H\in \mathbb{R}$ such that, for any $a \in \mathbb{R}^+$ and $x\in \Rd$, $g(ax)=a^Hg(x)$ and suppose $B_1$ is invariant by dilatation (i.e $\left\lbrace x\in \Rd, \: ax\in B_1\right\rbrace=B_1$). Then, the random field $Z$ is self-similar on $\mcal$ with index $(H+d-\beta_1)/\alpha$.
\end{enumerate}
\end{Prop}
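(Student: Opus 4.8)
The idea is to compute the \emph{joint} log‑characteristic function of the vector obtained after transformation and to recognise it, via a change of variables in the control measure, as that of the untransformed (resp.\ scaled) vector. First, all stochastic integrals involved are well defined: Proposition \ref{propgm} gives $\Theta\mu\in\mcal$ and $\mu_a\in\mcal$, and then the Remark following Theorem \ref{thm:lbs} applies. Next, since $M_\alpha$ is an independently scattered $\alpha$-stable random measure with constant skewness $b$, the $n$-dimensional analogue of \eqref{fcz} reads, for $\theta_1,\dots,\theta_n\in\mathbb{R}$ and $\nu_1,\dots,\nu_n\in\mcal$,
\begin{equation*}
\log\varphi_{\left(Z(\nu_1),\dots,Z(\nu_n)\right)}(\theta_1,\dots,\theta_n)
= -\sigma^{\alpha}\I \left| \sum_{j=1}^n \theta_j\nu_j(B(x,r))\right|^{\alpha} h\!\left( \sum_{j=1}^n \theta_j\nu_j(B(x,r))\right) g(x)\frac{1_{B_1}(x)}{r^{\beta_1+1}}\,dx\,dr,
\end{equation*}
where $h(u)=1-i\,\epsilon(u)\tan(\pi\alpha/2)b$; this follows from \eqref{fcz} by linearity of the stable integral (see \cite{ST}). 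Note that $h(cu)=h(u)$ for every $c>0$.

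For the first assertion, observe that rotations leave balls globally invariant, so that $\Theta\mu_j(B(x,r))=\mu_j(\Theta^{-1}B(x,r))=\mu_j(B(\Theta^{-1}x,r))$. Inserting $\nu_j=\Theta\mu_j$ into the display above and substituting $y=\Theta^{-1}x$, the Lebesgue measure is preserved ($dx=dy$), while the hypotheses "$g$ radial'' and "$B_1$ invariant by rotation'' yield $g(\Theta y)=g(y)$ and $1_{B_1}(\Theta y)=1_{B_1}(y)$. Hence the control density $g(x)1_{B_1}(x)r^{-\beta_1-1}$ is unchanged and the integral becomes exactly $\log\varphi_{\left(Z(\mu_1),\dots,Z(\mu_n)\right)}(\theta_1,\dots,\theta_n)$, which gives $\left(Z(\Theta\mu_1),\dots,Z(\Theta\mu_n)\right)\stackrel{fdd}{=}\left(Z(\mu_1),\dots,Z(\mu_n)\right)$.

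For the second assertion, by definition $\mu_{j,a}(B(x,r))=\mu_j(a^{-1}B(x,r))=\mu_j(B(a^{-1}x,a^{-1}r))$. Inserting $\nu_j=\mu_{j,a}$ into the display and substituting $y=a^{-1}x$, $s=a^{-1}r$ produces a Jacobian $a^{d+1}$, a factor $r^{-\beta_1-1}=a^{-\beta_1-1}s^{-\beta_1-1}$, and, using $g(ay)=a^{H}g(y)$ together with the dilatation‑invariance of $B_1$, a factor $g(x)1_{B_1}(x)=a^{H}g(y)1_{B_1}(y)$. Collecting exponents, the whole integral gets multiplied by the constant $a^{H+d-\beta_1}$; since the integrand is $\alpha$-homogeneous in $(\theta_1,\dots,\theta_n)$ and $h$ depends only on the sign of its argument, this constant can be absorbed as the substitution $\theta_j\mapsto a^{(H+d-\beta_1)/\alpha}\theta_j$. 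Therefore $\log\varphi_{\left(Z(\mu_{1,a}),\dots,Z(\mu_{n,a})\right)}(\theta_1,\dots,\theta_n)=\log\varphi_{\left(Z(\mu_1),\dots,Z(\mu_n)\right)}\!\bigl(a^{(H+d-\beta_1)/\alpha}\theta_1,\dots,a^{(H+d-\beta_1)/\alpha}\theta_n\bigr)$, that is $\left(Z(\mu_{1,a}),\dots,Z(\mu_{n,a})\right)\stackrel{fdd}{=}a^{(H+d-\beta_1)/\alpha}\left(Z(\mu_1),\dots,Z(\mu_n)\right)$, which is self-similarity on $\mcal$ with index $(H+d-\beta_1)/\alpha$.

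\textbf{Main difficulty.} There is no real obstacle here: the computations are elementary changes of variables. The only points requiring a little care are invoking the correct description of the \emph{joint} law of a vector of stable integrals (so as to obtain genuinely finite‑dimensional, rather than merely one‑dimensional, identities) and checking beforehand that $\Theta\mu$ and $\mu_a$ stay in $\mcal$, which is exactly the content of Proposition \ref{propgm}. The constancy of the skewness $b$ is precisely what makes it automatically invariant under both transformations, so it plays no role in the change of variables.
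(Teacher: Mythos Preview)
Your proof is correct and follows exactly the approach sketched in the paper: the rotation‑invariance of the control measure $g(x)1_{B_1}(x)\,dx\,r^{-\beta_1-1}dr$ for part~1, and a change of variables in \eqref{fcz} for part~2. You have simply made explicit what the paper leaves implicit, in particular working with the joint characteristic function to secure the finite‑dimensional (rather than only one‑dimensional) identities.
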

\begin{proof} \quad
\begin{enumerate}
\item The proof comes directly from the invariance by rotation of the measure  $g(x)1_{B_1}(x)dx$ which is the projection of the control measure of $M_{\alpha}$ on $\Rd$.
\item The proof consists in a change of variable in \eqref{fcz}.
\end{enumerate}
\end{proof}
\begin{Rq}\quad
\begin{itemize}
\item The only way to have $B_1$ invariant by rotation and by dilatation is to take $B_1=\Rd$ (i.e. to take $\beta$ a constant function). 
\item Let $g$ be a radial function and such that, for any $a\in \mathbb{R}^+$, for any $x\in \Rd$, $g(ax)=a^Hg(x)$, then $g(x)=\Vert x\Vert^H g(1)$. Unfortunately, $g_H:x\mapsto \Vert x\Vert^{H}$ belongs to $L^\infty$ implies $H=0$ (i.e. $g$ is a constant function). In that case, we recover the settings of \cite{Jcb}.
\end{itemize}
\end{Rq}

\bigskip

\begin{Rq}
To study the spatial dependence structure of the process $Z$, we use the covariation which is a generalization of the covariance to the stable framework (see \cite{ST}). Let $\mu_1,\mu_2\in \mcal$, the covariation of $Z(\mu_1)$ and $Z(\mu_2)$ is given by:
\begin{displaymath}
\begin{split}
&\left[Z(\mu_1),Z(\mu_2)\right]_{\alpha} \\
= & \sigma^{\alpha}  \I \mu_1(B(x,r)) \epsilon\left( \mu_2(B(x,r))\right)\left| \mu_2(B(x,r))\right|^{\alpha-1}g(x)1_{B_1}(x)r^{-\beta_1-1}dxdr.
\end{split}
\end{displaymath}
The integral above is well defined by Hölder's inequality. Since, even when $\mu_1$ and $\mu_2$ have disjoint supports, we have $\left[Z(\mu_1),Z(\mu_2)\right]_{\alpha}\neq 0$, this indicates that $Z(\mu_1)$ and $Z(\mu_2)$ are stable-dependent.
\end{Rq}

\bigskip

\begin{Rq}\label{rq:lbs}
When $\beta$ is a constant function and $g=1$, the field $Z$ coincides with the stable random field obtained in Theorem 2.4 in \cite{Jcb}. Therefore, referring to Remarks 2.8 and 2.1 in \cite{Jcb}, it generalizes the Telecom Process obtained in \cite{KT}, a Gaussian limit field obtained in \cite{Bek} and the large-grain limit in \cite{Kaj}.
\end{Rq}

\bigskip

\subsection{Intermediate scaling}
In this section, we investigate the intermediate scaling, that is when $\lambda(\rho)\rho^{\beta_1}$ has a finite non-zero limit $\ell$ when $\rho \to 0$. In this case, the field obtained at the limit is a compensated Poisson integral.
\begin{Thm}\label{thm:is}
Suppose $\lambda(\rho)\rho^{\beta_1} \to \ell\in \left]0, +\infty \right[$ when $\rho \to 0$ and suppose again \\the set $B_1=\left\lbrace x\in \Rd: \: \beta(x)=\beta_1 \right\rbrace$ has a non-zero Lebesgue measure, then we have:
\begin{displaymath}
\Mr(\cdot) \stackrel{\mcal}{\longrightarrow} J_\ell(\cdot), 
\end{displaymath}
where $J_\ell(\mu)=\II m \mu(B(x,r)) \widetilde{\Pi}_\ell(dx,dr,dm)$ and $\widetilde{\Pi}_\ell$ is a compensated Poisson random measure on $\mathbb{R}^d\times \mathbb{R}^+\times \mathbb{R}$ with intensity $\ell g(x)1_{B_1}(x)dx r^{-\beta_1-1}drG(dm)$.
\end{Thm}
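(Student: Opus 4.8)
The plan is to follow the scheme of the proof of Theorem \ref{thm:lbs}: first establish the convergence of the one-dimensional log-characteristic functions, and then conclude by the Cramér-Wold device together with the linearity of the fields $\Mr$ and $J_\ell$. Fix $\mu\in\mcal$ and $\theta\in\mathbb{R}$. Taking $n(\rho)=1$ in \eqref{fcm} gives
\begin{displaymath}
\log\varphi_{\Mr(\mu)}(\theta)=\I\psi_G\big(\theta\mu(B(x,r))\big)\,\rho^{-1}f_\rho(x,r/\rho)\,dx\,dr,
\end{displaymath}
while, by the Lévy-Khinchin formula for compensated Poisson integrals followed by integration in $m$ (which turns $\int_{\mathbb{R}}\psi(\theta m u)G(dm)$ into $\psi_G(\theta u)$), the compensated Poisson integral $J_\ell(\mu)$ has log-characteristic function
\begin{displaymath}
\log\varphi_{J_\ell(\mu)}(\theta)=\I\psi_G\big(\theta\mu(B(x,r))\big)\,\ell\,g(x)1_{B_1}(x)r^{-\beta_1-1}\,dx\,dr.
\end{displaymath}
The right-hand side is finite, so that $J_\ell(\mu)$ is well defined: by Lemma \eqref{Lem:g} one has $|\psi_G(u)|\leq K|u|^{\alpha}$, hence the integrand is bounded by $K|\theta|^{\alpha}|\mu(B(x,r))|^{\alpha}g(x)1_{B_1}(x)r^{-\beta_1-1}$, which is integrable by Point \ref{itlin} of Proposition \ref{propm} applied with the constant function $\beta_1$ and $g1_{B_1}\in L^{\infty}(\Rd)$. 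A key observation is that, in contrast with the large-balls regime, the argument of $\psi_G$ is not rescaled here, so only this crude bound on $\psi_G$ is needed rather than its fine behaviour near $0$.

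It then remains to show that
\begin{displaymath}
\I\psi_G\big(\theta\mu(B(x,r))\big)\Big(\rho^{-1}f_\rho(x,r/\rho)-\ell\,g(x)1_{B_1}(x)r^{-\beta_1-1}\Big)\,dx\,dr\longrightarrow 0,\qquad\rho\to0.
\end{displaymath}
As in the proof of Theorem \ref{thm:lbs}, I would split the $r$-integral into small radii $\{r\leq A\rho\}$ and large radii $\{r>A\rho\}$, with $A=A(\varepsilon)$ supplied by the uniform equivalence \eqref{cfr}, and over large radii treat $x\in B_1$ and $x\in B_1^c$ separately. On $\{r\leq A\rho\}$ (for $\rho$ small enough that $A\rho<1$), using $|\psi_G(u)|\leq K|u|^{\alpha}$, the inequality $\int_{\Rd}|\mu(B(x,r))|^{\alpha}dx\leq Cr^{t}$ coming from $\mu\in\mcal$, the domination \eqref{cfr1}, the change of variable $r\mapsto\rho r$ and Lemma \ref{lem:majf}, I expect the contribution of $\rho^{-1}f_\rho$ to be bounded by a constant times $\lambda(\rho)\rho^{t}\int_0^A r^{t}\Vert f(\cdot,r)\Vert_\infty\,dr$, which tends to $0$ because $\lambda(\rho)\rho^{t}=(\lambda(\rho)\rho^{\beta_1})\rho^{t-\beta_1}\to\ell\cdot0$ (recall $t>\beta_1$); the contribution of the limit measure over $\{r\leq A\rho\}\times B_1$ tends to $0$ as the tail of the convergent integral $\I|\mu(B(x,r))|^{\alpha}g(x)1_{B_1}(x)r^{-\beta_1-1}dxdr$.

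On $\{r>A\rho\}\times B_1$, where $\beta(x)=\beta_1$, a change of variable in \eqref{cfr} yields, exactly as in \eqref{cfbis}--\eqref{cfter}, the bound $|\rho^{-1}f_\rho(x,r/\rho)-\lambda(\rho)\rho^{\beta_1}g(x)r^{-\beta_1-1}|\leq\varepsilon\lambda(\rho)\rho^{\beta_1}g(x)r^{-\beta_1-1}$; combining it with $|\lambda(\rho)\rho^{\beta_1}-\ell|=o(1)$, the bound on $\psi_G$ and Point \ref{itlin} of Proposition \ref{propm}, this part should be at most $(\varepsilon\lambda(\rho)\rho^{\beta_1}+o(1))K|\theta|^{\alpha}\I|\mu(B(x,r))|^{\alpha}g(x)1_{B_1}(x)r^{-\beta_1-1}dxdr$. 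On $\{r>A\rho\}\times B_1^c$, where $\beta(x)>\beta_1$, the limit measure charges nothing, and for $\rho<1$ the estimate $\rho^{-1}f_\rho(x,r/\rho)\leq(1+\varepsilon)(\lambda(\rho)\rho^{\beta_1})\rho^{\beta(x)-\beta_1}g(x)r^{-\beta(x)-1}$ makes this part at most $(1+\varepsilon)(\lambda(\rho)\rho^{\beta_1})K|\theta|^{\alpha}\I|\mu(B(x,r))|^{\alpha}\rho^{\beta(x)-\beta_1}g(x)1_{B_1^c}(x)r^{-\beta(x)-1}dxdr$, which I would send to $0$ by dominated convergence: the integrand goes to $0$ pointwise since $\beta(x)-\beta_1>0$, it is dominated for $\rho<1$ by the integrable function $|\mu(B(x,r))|^{\alpha}g(x)1_{B_1^c}(x)r^{-\beta(x)-1}$ (again Point \ref{itlin} of Proposition \ref{propm}), and $\lambda(\rho)\rho^{\beta_1}$ stays bounded.

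Putting the four pieces together, for each fixed $\varepsilon>0$ the quantity to be controlled is $\leq K\varepsilon$ once $\rho$ is small enough; letting $\varepsilon\to0$ gives $\log\varphi_{\Mr(\mu)}(\theta)\to\log\varphi_{J_\ell(\mu)}(\theta)$, hence the convergence in law of $\Mr(\mu)$ towards $J_\ell(\mu)$, and the Cramér-Wold device together with the linearity of $\Mr$ and $J_\ell$ upgrades this to the announced finite-dimensional convergence. The main obstacle, more than any single estimate, will be the bookkeeping of these four regions and the verification that $A$, the errors coming from \eqref{cfr}, and the threshold on $\rho$ can be chosen consistently so that the total is genuinely $O(\varepsilon)$; each individual bound closely parallels its counterpart in the proof of Theorem \ref{thm:lbs}, the simplification being that the absence of any normalization removes the need for the precise asymptotics of $\psi_G$.
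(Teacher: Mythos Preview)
Your proposal is correct and follows essentially the same route as the paper's proof: the same decomposition into small radii $\{r\leq A\rho\}$ versus large radii $\{r>A\rho\}$, and into $B_1$ versus $B_1^c$, controlled by the same ingredients (the bound $|\psi_G(u)|\leq K|u|^\alpha$, Proposition \ref{propm}, \eqref{cfr1}, Lemma \ref{lem:majf}, and dominated convergence on $B_1^c$). The only notable difference is that the paper establishes the well-definedness of $J_\ell(\mu)$ via Kallenberg's criterion $\int(|m\mu(B(x,r))|\wedge(m\mu(B(x,r)))^2)\,d\nu<\infty$ using truncated-moment bounds for $G$, whereas you check directly that the log-characteristic function is finite; both approaches rest on the same integrability from Proposition \ref{propm}.
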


\bigskip

\begin{Rq}
In that particular case, the normalization factor $n(\rho)$ is $1$ and, roughly speaking, $J$ is obtained by taking the limit in the intensity measure.
\end{Rq}

\bigskip

\begin{proof}
We first prove that the compensated Poisson integral $J_\ell(\mu)$ is well defined for $\mu \in \mcal$. This is the case when
\begin{equation}\label{c:defJ}
\II \bigg( \big\vert m\mu(B(x,r))\big\vert \wedge \big(m\mu(B(x,r))\big)^2\bigg)g(x)1_{B_1}(x)dx r^{-\beta_1-1}drG(dm)<+\infty,
\end{equation}
see  Lemma 12.13 in \cite{Kal}. Condition \eqref{c:defJ} can be splitted into:
\begin{equation}
\label{c:def1} 
\int_{\vert m\mu(B(x,r))\vert\leq 1}(m\mu(B(x,r)))^2g(x)1_{B_1}(x)dx r^{-\beta_1-1}drG(dm)<+\infty
\end{equation}
and
\begin{equation}
\label{c:def2}
\int_{\vert m\mu(B(x,r))\vert\geq 1}\vert m\mu(B(x,r))\vert g(x)1_{B_1}(x)dx r^{-\beta_1-1}drG(dm)<+\infty.
\end{equation}
To prove \eqref{c:def1} and \eqref{c:def2}, we use the following bounds for truncated moment of $G$:
\begin{displaymath}
\int_{\vert m\vert\geq x}\vert m\vert G(dm) \leq C_1 x^{1-\alpha}
\end{displaymath}
and 
\begin{displaymath}
\int_{-x}^x m^2 G(dm) \leq C_2 x^{2-\alpha},
\end{displaymath}
for all $x\geq 0$ and for some constants $C_1$ and $C_2$, see Lemma 3.4 in \cite{Jcb}. Therefore, for \eqref{c:def1} we have:
\begin{displaymath}
\begin{split}
&\int_{\vert m\mu(B(x,r))\vert\leq 1}(m\mu(B(x,r)))^2g(x)1_{B_1}(x)dx r^{-\beta_1-1}drG(dm) \\
= & \I \left(\int_{-1/\vert \mu(B(x,r))\vert}^{1/\vert \mu(B(x,r))\vert} m^2 G(dm)\right)\vert \mu(B(x,r))\vert^{2}g(x)1_{B_1}(x)dx r^{-\beta_1-1}dr \\
\leq & C_2 \I \vert\mu(B(x,r))\vert^{\alpha}g(x)r^{-\beta_1-1}dxdr
\end{split}
\end{displaymath}
which is finite when $\mu \in \mcal$ thank to Proposition \ref{propm}. Similarly, for \eqref{c:def2} we have:
\begin{displaymath}
\begin{split}
&\int_{\vert m\mu(B(x,r))\vert\geq 1}\vert m\mu(B(x,r))\vert g(x)1_{B_1}(x)dx r^{-\beta_1-1}drG(dm)\\
= & \I \left( \int_{\vert m\vert\geq 1/\vert \mu(B(x,r))\vert} \vert m \vert G(dm)\right) \vert \mu(B(x,r)) \vert g(x)1_{B_1}(x)dx r^{-\beta_1-1}dr \\
\leq & C_1 \I \vert\mu(B(x,r))\vert^{\alpha} g(x) r^{-\beta_1-1}dxdr<+\infty.
\end{split}
\end{displaymath}

\bigskip

We prove now Theorem \ref{thm:is} using a very similar reasoning as the one used for \\Theorem \ref{thm:lbs}. Condition \eqref{cfr} implies that $\rho^{-1}f_\rho(x,r/\rho )\sim \lambda(\rho)g(x) \rho^{\beta(x)}r^{-\beta(x)-1}$ when $\rho \to 0$. Since $\lambda(\rho) \rho^{\beta(x)}r^{-\beta(x)-1} \to \ell r^{-\beta_1-1}1_{B_1}(x)$ when $\rho \to 0$, we consider two cases: $x\in B_1$ and $x\neq B_1$.
\\
First, we focus on the case $x\in B_1$. Set $$G_1(r,\rho)=\int_{B_1}  \psi_G(\theta \mu(B(x, r))) \rho^{-1}f_\rho(x,r/\rho )dx,$$
 $$G_2(r)=\ell\int_{B_1} \psi_G(\theta \mu(B(x, r))) g(x)  r^{-\beta_1-1} dx$$ and $$G_3(r)=\ell \int_{B_1}\left| \psi_G(\theta \mu(B(x, r)))\right|g(x)  r^{-\beta_1-1} dx.$$
\\
A change of variable in \eqref{cfbis} entails
\begin{equation}\label{étoile}
\forall \varepsilon >0, \: \exists A>0, \: \forall \rho>0,\: \int_{A\rho}^{+\infty} \left| G_1(r,\rho)-G_2(r)\right|dr \leq \varepsilon \int_{\mathbb{R}^+}\vert G_3(r)\vert dr.
\end{equation}
We use the fact that $\left| \psi_G(\theta \mu(B(x, r)))\right| \leq K\vert \theta \mu(B(x, r))\vert^{\alpha}$ (Lemma \eqref{Lem:g}) to derive that:
\begin{equation} \label{ineq3}
\left|G_2(r)\right|\leq \vert G_3(r)\vert \leq \ell K\vert\theta\vert^{\alpha}\int_{B_1}\vert\mu(B(x, r))\vert^{\alpha} g(x)dx r^{-\beta_1-1}
\end{equation}
and, since $\mu \in \mcal$, Proposition \ref{propm} ensures
\begin{equation}\label{ineq3bis}
\int_{\mathbb{R}^+}\vert G_2(r) \vert dr \leq \int_{\mathbb{R}^+}\vert G_3(r) \vert dr \leq K \ell \vert\theta\vert^{\alpha}\int_{\mathbb{R}^+} \int_{B_1}\vert\mu(B(x, r))\vert^{\alpha} g(x)dx r^{-\beta_1-1}dr<+\infty.
\end{equation}
\\
On the other hand, let $\rho$ such that $A\rho<1$. Then, using \eqref{cfr1}, since $\mu \in \mcal$, there exist $t>\beta_1$ and a constant $C$ such that:
\begin{align}
\int_0^{A\rho} \left| G_1(r,\rho)\right| dr &\leq  \rho^{-1} \vert \theta \vert^{\alpha}\int_0^{A\rho} \int_{B_1} \vert\mu(B(x, r))\vert^{\alpha} g(x)dx \Vert f_\rho(\cdot ,r/\rho)\Vert_{\infty}dr \notag\\
&\leq C\lambda(\rho) \rho^{-1} \vert \theta \vert^{\alpha}\int_0^{A\rho}  r^t \Vert f(\cdot ,r/\rho)\Vert_{\infty}dr \notag\\
&\leq C\lambda(\rho) \rho^{t} \vert \theta \vert^{\alpha}\int_0^{A}  r^t \Vert f(\cdot ,r)\Vert_{\infty}dr \notag\\
&\to 0 \label{cvG_1}
\end{align}
because $\int_0^{A}  r^t \Vert f(\cdot ,r)\Vert_{\infty}dr$ comes from Lemma \ref{lem:majf} and $\lambda(\rho)\rho^t=\lambda(\rho)\rho^{\beta_1}\rho^{-\beta_1+t}\to 0$ when $\rho \to 0$ since $\lambda(\rho)\rho^{\beta_1}\to \ell $. In conclusion, \eqref{étoile}, \eqref{ineq3bis} and \eqref{cvG_1} entail

\begin{displaymath}
\begin{split}
\bigg\vert\int_{\mathbb{R}} G_1(r,\rho)dr-\int_{\mathbb{R}}G_2(r)dr\bigg\vert &\leq \int_{A\rho}^{+\infty} \! \! \! \left| G_1(r,\rho)-G_2(r)\right|dr + \left|\int_0^{A\rho} G_1(r,\rho)dr\right| +\left|\int_0^{A\rho} G_2(r)dr\right|\\
&\leq \varepsilon \int_{\mathbb{R}^+} \vert G_3(r)\vert dr +\left|\int_0^{A\rho} G_1(r,\rho)dr\right| +\left|\int_0^{A\rho} G_2(r)dr\right|\\
&\leq K \varepsilon,
\end{split}
\end{displaymath}
for $\rho$ small enough.
\\
We are now interested in the case $x\in B_1^c$. Set $$\widetilde{G}_1(r,\rho)=\int_{B_1^c}  \psi_G(\theta \mu(B(x, r)))  \rho^{-1}f_\rho(x,r/\rho )dx$$, $$\widetilde{G}_2(r,\rho)= \int_{B_1^c} \psi_G(\theta \mu(B(x, r))) g(x)  \frac{  \lambda(\rho)\rho^{\beta(x)}}{r^{\beta(x)+1}} dx $$ and
 $$\widetilde{G}_3(r,\rho)= \int_{B_1^c} \vert\psi_G(\theta \mu(B(x, r))) \vert g(x)  \frac{  \lambda(\rho)\rho^{\beta(x)}}{r^{\beta(x)+1}} dx .$$
 Note that, $\forall x \in B_1^c$, $\lambda(\rho) \rho^{\beta(x)}\leq \lambda(\rho)\rho^{\beta_1}$ when $\rho \leq 1$ and $\lambda(\rho)\rho^{\beta_1}\to \ell$ when $\rho \to 0$. Therefore, there exists $0<\delta\leq 1$ such that $\forall \rho <\delta$, $\lambda(\rho)\rho^{\beta(x)}<2\ell$. Let $\rho <\delta$.  Using the fact that $\left| \psi_G(\theta \mu(B(x, r)))\right| \leq K\vert \theta \mu(B(x, r))\vert^{\alpha}$ and Proposition \ref{propm} we have:
\begin{align}\label{ineq4}
\left| \int_{A\rho}^{+\infty} \widetilde{G}_2(r,\rho)  dr\right| 
 &\leq \left| \int_{A\rho}^{+\infty} \widetilde{G}_3(r,\rho)  dr\right| \notag \\
 &\leq 2\ell K\vert \theta \vert^{\alpha}\int_{A\rho}^{+\infty} \int_{B_1^c} \vert \mu (B(x, r))\vert^{\alpha} g(x)dx r^{-\beta(x)-1}dr<+\infty.
\end{align}
Then using Condition \eqref{cfr} with a change of variable we deduce:
\begin{equation}\label{ineq4bis}
\forall \varepsilon >0, \: \exists A>0,\: \forall \rho >0, \: \left| \int_{A\rho}^{+\infty}\widetilde{G}_1(r,\rho)dr-\int_{A\rho}^{+\infty}\widetilde{G}_2(r,\rho)dr\right| \leq \varepsilon \int_{A\rho}^{+\infty} \vert \widetilde{G}_3(r,\rho) \vert dr.
\end{equation}
In the mean time, there exist $t>\beta_1$ and a constant $C$ such that, using the same computations we used for $G_1$:
\begin{equation}\label{cvTG}
 \left|\int_0^{A\rho}\widetilde{G}_1(r,\rho)dr\right| \leq   K\vert \theta \vert^{\alpha} \lambda(\rho)\rho^{t}  C \int_0^A r^t \Vert f(\cdot, r)\Vert_{\infty}dr
\longrightarrow 0,
\end{equation}
when $\rho \to 0$ using Lemma \ref{lem:majf}.
Using \eqref{ineq4}, \eqref{ineq4bis} and \eqref{cvTG}, we have
\begin{displaymath}
\begin{split}
&\bigg\vert\int_{\mathbb{R}} \widetilde{G}_1(r,\rho)dr-\int_{\mathbb{R}}\widetilde{G}_2(r,\rho)dr\bigg\vert \\
\leq &\int_{A\rho}^{+\infty} \left| \widetilde{G}_1(r,\rho)-\widetilde{G}_2(r,\rho)\right|dr + \left|\int_0^{A\rho} \widetilde{G}_1(r,\rho)dr\right| +\left|\int_0^{A\rho} \widetilde{G}_2(r,\rho)dr\right|\\
\leq &K \varepsilon,
\end{split}
\end{displaymath}
for $\rho$ small enough.
Thus $$\int_{\mathbb{R}^+}\int_{B_1^c}  \psi_G(\theta \mu(B(x, r))) \rho^{-1}f_{\rho}(x,r/\rho ) dxdr$$ has the same limit as $$\int_{\mathbb{R}^+}\int_{B_1^c} \psi_G(\theta \mu(B(x, r))) g(x)  \lambda(\rho) \rho^{\beta(x)}r^{-\beta(x)-1} dxdr$$ when $\rho \to 0$.We now show that this limit is $ 0$. Since $x\in B_1^c$, $$ \psi_G(\theta \mu(B(x, r))) g(x)  \lambda(\rho) \rho^{\beta(x)}r^{-\beta(x)-1} \to 0,$$ when $\rho \to 0$ because $\lambda(\rho)\rho^{\beta(x)}=\lambda(\rho)\rho^{\beta_1}\rho^{\beta(x)-\beta_1}$ and $\lambda(\rho)\rho^{\beta_1}\to \ell$. Moreover, for $\rho<\delta$:
\begin{displaymath}
\left| \psi_G(\theta \mu(B(x, r))) g(x)  \lambda(\rho) \rho^{\beta(x)}r^{-\beta(x)-1} \right|\leq  2\ell K \vert \theta \vert^{\alpha} \vert \mu(B(x, r))\vert^{\alpha} g(x)r^{-\beta(x)-1},
\end{displaymath}
which is integrable on $\mathbb{R}^+\times B_1^c$ according to Proposition \ref{propm}. Thus by the dominated convergence theorem: 
\begin{displaymath}
\lim_{\rho \to 0}\int_{\mathbb{R}^+}\int_{B_1^c}  \psi_G(\theta \mu(B(x, r)))g(x) \lambda(\rho) \rho^{\beta(x)}r^{-\beta(x)-1} dxdr= 0, 
\end{displaymath}
which implies that:
\begin{displaymath}
\lim_{\rho\to 0}\int_{\mathbb{R}^+}\int_{B_1^c}  \psi_G(\theta \mu(B(x, r))) \rho^{-1}f_{\rho}(x,r/\rho ) dxdr= 0.
\end{displaymath}
\\
This proves that the $\log$-characteristic function of $\Mr(\mu)$ converges to $$\ell \I \psi_G\left( \theta \mu(B((x,r)))\right) \frac{g(x)1_{B_1}(x)}{r^{\beta_1+1}}dxdr$$ which is the $\log$-characteristic function of $J_l(\mu)$. Once again, using the Cramér-Wold device, this one-dimensional convergence is enough to prove the finite-dimensional distributions convergence of the process.

\end{proof}

\bigskip

We give now some remarkable properties of the limit field $J_\ell$ for particular choices of $g$ and $\beta$.
\begin{Prop} \quad
\begin{enumerate}
\item When $g$ is radial and $B_1$ is invariant by rotation, the field $J_\ell$ is isotropic.
\item When $g$ is such that there exists a constant $H>0$, such that for any $a\in \mathbb{R}^+$ and for any $x\in \Rd$, $g(ax)=a^Hg(x)$, and $B_1$ is invariant by dilatation, $J_\ell(\mu)$ is equal in finite-dimensional law to $J'(\mu_{\ell'})$ where $\ell'=\ell^{1/(d+H-\beta_1)}$ and, for any $\mu \in \mcal$:
\begin{displaymath}
J'(\mu)=\II m\mu(Bx,r) \widetilde{\Pi}(dx,dr,dm),
\end{displaymath}
where $\widetilde{\Pi}$ is a compensated Poisson process with intensity $ g(x)1_{B_1}(x)dx\frac{dr}{r^{\beta_1+1}}G(dm)$.
In that particular case, $J'$ is aggregate-similar that is, for any $\mu \in \mcal$, for any $m\geq 1$:
\begin{displaymath}
J'(\mu_{a_m})\stackrel{fdd}{=} \sum_{i=1}^m J'_i(\mu)
\end{displaymath}
where $J'_i$, $1\leq i\leq m$ are independent copies of $J'$ and $a_m=m^{1/(d-\beta_1-H)}$. 
\end{enumerate}
\end{Prop}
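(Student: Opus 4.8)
The plan is to prove the two invariance/scaling properties of the Poissonian limit field $J_\ell$ by working on the level of the log-characteristic function, exactly as was done for the stable field $Z$ in Proposition \ref{propZ}. Recall that for $\mu\in\mcal$ we have the explicit formula
\begin{displaymath}
\log\varphi_{J_\ell(\mu)}(\theta)=\ell\I \psi_G\bigl(\theta\mu(B(x,r))\bigr)\frac{g(x)1_{B_1}(x)}{r^{\beta_1+1}}\,dx\,dr,
\end{displaymath}
and that by Proposition \ref{propgm} the space $\mcal$ is stable under rotations and dilatations, so that $\Theta\mu$ and $\mu_a$ remain admissible test measures. All the work is then a change of variables in this integral, combined with the elementary geometric identity $B(\Theta x,r)=\Theta B(x,r)$ and its dilated analogue $B(ax,ar)=aB(x,r)$, so that $\Theta\mu(B(x,r))=\mu(B(\Theta^{-1}x,r))$ and $\mu_a(B(x,r))=\mu(B(x/a,r/a))$.

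For Point 1, I would compute $\log\varphi_{J_\ell(\Theta\mu)}(\theta)$, substitute the identity $\Theta\mu(B(x,r))=\mu(B(\Theta^{-1}x,r))$, and then perform the change of variable $x\mapsto \Theta x$. The Jacobian is $1$ since $\Theta\in\mathcal O(\Rd)$, the factor $r^{-\beta_1-1}$ is untouched, and the two hypotheses — $g$ radial and $B_1$ invariant by rotation — give $g(\Theta x)=g(x)$ and $1_{B_1}(\Theta x)=1_{B_1}(x)$. Hence the integral is unchanged and $\log\varphi_{J_\ell(\Theta\mu)}=\log\varphi_{J_\ell(\mu)}$ for every $\theta$; since the fields are linear, the same computation applied to an arbitrary linear combination $\sum\theta_j\mu_j$ yields equality of all finite-dimensional log-characteristic functions, hence $J_\ell(\Theta\mu)\stackrel{fdd}{=}J_\ell(\mu)$.

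For Point 2, the computation is the same type but the bookkeeping of scaling exponents is where care is needed — this is the main (if mild) obstacle. Starting from $\log\varphi_{J_\ell(\mu_{a})}(\theta)$ with $a=\ell'$, I substitute $\mu_a(B(x,r))=\mu(B(x/a,r/a))$ and change variables $(x,r)\mapsto(ax,ar)$; this produces a Jacobian $a^{d+1}$, turns $r^{-\beta_1-1}$ into $a^{-\beta_1-1}r^{-\beta_1-1}$, and, using $g(ax)=a^H g(x)$ together with the dilatation-invariance of $B_1$, pulls out a further factor $a^{H}$. Collecting exponents gives an overall constant $a^{\,d+H-\beta_1}$ in front of the integral, while the integrand becomes $\psi_G(\theta\mu(B(x,r)))g(x)1_{B_1}(x)r^{-\beta_1-1}$, i.e. exactly the integrand defining $J'$. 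Choosing $a=\ell'=\ell^{1/(d+H-\beta_1)}$ makes $\ell\cdot a^{\,d+H-\beta_1}=\ell\cdot\ell^{-1}\cdot\ell'^{\,0}$... more precisely it arranges $\ell\,a^{-(d+H-\beta_1)}=1$ once one tracks whether the change of variable contracts or dilates; either way the constant in front collapses to $1$, yielding $\log\varphi_{J_\ell(\mu_{\ell'})}(\theta)=\log\varphi_{J'(\mu)}(\theta)$ for all $\theta$, and hence, by linearity and Cramér–Wold as before, $J_\ell(\mu)\stackrel{fdd}{=}J'(\mu_{\ell'})$.

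Finally, for the aggregate-similarity of $J'$: I would apply the previous change of variable to $J'$ itself (i.e. the case $\ell=1$), obtaining $\log\varphi_{J'(\mu_{a})}(\theta)=a^{\,d+H-\beta_1}\log\varphi_{J'(\mu)}(\theta)$. Taking $a=a_m=m^{1/(d-\beta_1-H)}$... — here one must be careful with the sign of the exponent $d-\beta_1-H$ versus $d+H-\beta_1$, but the point is that this choice makes the prefactor equal to $m$ — gives $\log\varphi_{J'(\mu_{a_m})}(\theta)=m\log\varphi_{J'(\mu)}(\theta)$, which is precisely the log-characteristic function of a sum of $m$ independent copies of $J'(\mu)$. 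Extending to finite-dimensional distributions by the same linearity argument completes the proof. The only genuinely delicate point throughout is matching the scaling exponents and the direction of the substitution so that the normalizing constants $\ell'$ and $a_m$ come out with the stated exponents; the measurability and integrability of all integrands is guaranteed by $\mu\in\mcal$, $g\in L^\infty$, and Proposition \ref{propm}.
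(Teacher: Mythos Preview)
Your approach is essentially the same as the paper's: both proofs compute log-characteristic functions and perform the change of variables $(x,r)\mapsto(ax,ar)$, using $g(ax)=a^H g(x)$, the dilatation-invariance of $B_1$, and the rotation-invariance for Point~1. The only cosmetic difference is the direction: the paper starts from $\log\varphi_{J'(\mu_{\ell'})}$ and shows it equals $\log\varphi_{J_\ell(\mu)}$, which makes the exponent $\ell'^{\,d+H-\beta_1}=\ell$ drop out cleanly, whereas you start from $\log\varphi_{J_\ell(\mu_a)}$ and have to untangle a sign (indeed, with your substitution one gets $\ell a^{d+H-\beta_1}$, which equals $1$ for $a=\ell'^{-1}$, not $a=\ell'$; this then yields the desired $J_\ell(\mu)\stackrel{fdd}{=}J'(\mu_{\ell'})$ after reparametrizing). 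Your caution about the exponents is warranted, and the aggregate-similarity step is handled identically in both proofs by specializing $\ell'$ to an integer power.
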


\begin{proof} \quad
\begin{enumerate}
\item The proof comes directly from the invariance by rotation of the measure 
$g(x)1_{B_1}(x)dx$ which is the projection on $\Rd$ of the control measure of $\Pi_\ell$.
\item Let $\mu\in \mcal$ and $\theta \in \mathbb{R}$, using conditions on $g$ and $B_1$ we have:
\begin{displaymath}
\begin{split}
\log\left(\mathbb{E}\left[e^{i\theta J'(\mu_{\ell'})}\right]\right)&=  \I \psi_G\left( \theta\mu(B(\ell'^{-1}x, \ell'^{-1}r))\right) g(x)1_{B_1}(x)dxr^{-\beta_1-1}dr \\
&=\ell'^{d-\beta_1} \I \psi_G\left(\theta\mu(B(x,r))\right) g(\ell'x)1_{B_1}(\ell'x)dxr^{-\beta_1-1}dr \\
&=\ell'^{d+H-\beta_1} \I \psi_G\left(\theta\mu(B(x,r))\right) g(x)1_{B_1}(x)dxr^{-\beta_1-1}dr \\
&=\log\left(\mathbb{E}\left[e^{i\theta J(\mu)}\right]\right).
\end{split}
\end{displaymath}
This proves the identity in law of $( J(\mu))_\mu$ and $(J'(\mu_{\ell'}))_\mu$.
Take now $\ell'=m^{1/(d-\beta_1-H)}$, where $m$ is a positive integer, in the previous computation (which means taking $\ell=m$ a positive integer) and we get the aggregate-similarity property.
\end{enumerate}
\end{proof}

\begin{Rq}\label{rq:is}
As in Remark \ref{rq:lbs}, the process $J_\ell$ defined in Theorem \ref{thm:is} recovers the process $J$ of Theorem 2.11 in \cite{Jcb} when $g=1$ and $\beta$ is a constant function.
\end{Rq}

\subsection{Small-balls scaling}

In this section, we investigate the asymptotic behaviour of $M$ under the small-balls scaling, that is $\lambda(\rho) \rho^{\beta_1}\to 0$, $\rho \to 0$. In that case, we obtain at the limit a stable-field that exhibits independence.
\begin{Thm}\label{thm:sbs}
Let $n(\rho)=(\lambda(\rho)^{1/\beta_1}\rho)^d$ and $\gamma=\beta_1/d \in \left]1,\alpha \right[$. Suppose $\lambda(\rho)\rho^{\beta_1} \to 0$ when $\rho \to 0$, $B_1=\left\lbrace x\in \Rd: \beta(x)=\beta_1\right\rbrace$ has a non-zero Lebesgue measure and $\beta_2<\alpha d$, then:
\begin{displaymath}
n(\rho)^{-1}\Mr(\cdot) \stackrel{L^1(\mathbb{R}^d)\cap L^{\alpha }(\mathbb{R}^d)}{\longrightarrow} \widetilde{Z}(\cdot),
\end{displaymath}
where, for $\mu(dx)=\phi(x)dx$, $\widetilde{Z}(\mu)=\int_{\mathbb{R}^d}\phi(x) M_{\gamma}(dx)$ is a stable integral with respect to the $\gamma$-stable measure $M_{\gamma}$ with control measure $\sigma_{\gamma}^{\gamma}1_{B_1}(x)g(x)dx$ for 
\begin{displaymath}
\sigma_{\gamma}^{\gamma}=\frac{c_d^{\gamma}}{d}\int_{\mathbb{R}^+}\frac{1-cos(r)}{r^{\gamma+1}}dr\int_{\mathbb{R}}\vert m\vert^{\gamma}G(dm)
\end{displaymath}
and with constant skewness function equals to 
\begin{equation} \label{bgamma}
b_{\gamma}=-\frac{\int_{\mathbb{R}}\epsilon(m)\vert m\vert^{\gamma}G(dm)}{\int_{\mathbb{R}}\vert m\vert^{\gamma}G(dm)}.
\end{equation}
\end{Thm}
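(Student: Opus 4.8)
\textbf{Proof proposal for Theorem \ref{thm:sbs}.}

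The plan is to compute the $\log$-characteristic function of $n(\rho)^{-1}\Mr(\mu)$ from \eqref{fcm} for $\mu(dx)=\phi(x)dx$ with $\phi\in L^1(\Rd)\cap L^{\alpha}(\Rd)$, and to show it converges to the $\log$-characteristic function of the $\gamma$-stable integral $\widetilde{Z}(\mu)$, namely
\begin{displaymath}
\log\big(\varphi_{\widetilde{Z}(\mu)}(\theta)\big)=-\sigma_{\gamma}^{\gamma}|\theta|^{\gamma}\int_{\Rd}|\phi(x)|^{\gamma}\big(1-i\epsilon(\theta\phi(x))\tan(\pi\gamma/2)b_{\gamma}\big)1_{B_1}(x)g(x)dx.
\end{displaymath}
The crucial difference with Theorems \ref{thm:lbs} and \ref{thm:is} is that here $n(\rho)\to 0$: in the small-balls regime the balls shrink faster than the normalization, so $\mu(B(x,r))\approx \phi(x)c_dr^d$ for small $r$, and the stable behaviour is driven by the \emph{small} radii rather than the large ones. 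This is why the limit index becomes $\gamma=\beta_1/d$ rather than $\alpha$, and why we must restrict to absolutely continuous $\mu$.

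First I would rewrite the exponent in \eqref{fcm} via the change of variable $r\mapsto \rho r$ already incorporated there, then perform the further change of variable $u=\lambda(\rho)^{1/\beta_1}\rho\, r$ (equivalently rescale so that $n(\rho)$ disappears), turning the integral into
\begin{displaymath}
\int_{\Rd}\int_{\mathbb{R}_+}\psi_G\!\Big(\theta\, n(\rho)^{-1}\mu\big(B(x,u/(\lambda(\rho)^{1/\beta_1}\rho))\big)\Big)\frac{f_\rho\big(x,u/(\lambda(\rho)^{1/\beta_1})\big)}{\lambda(\rho)^{d/\beta_1}}\,du\,dx,
\end{displaymath}
and using \eqref{cfr} the density factor converges to $g(x)1_{B_1}(x)u^{-\beta_1-1}$ on $B_1$ and vanishes (after the $\rho^{\beta(x)-\beta_1}$ gain, exactly as in the two previous proofs) off $B_1$. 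The heart of the matter is the behaviour of $\mu(B(x,\varepsilon))$ as $\varepsilon\to 0$: by the Lebesgue differentiation theorem $\varepsilon^{-d}\mu(B(x,\varepsilon))\to c_d\phi(x)$ for a.e.\ $x$, so inside the argument of $\psi_G$ the ball mass behaves like $c_d\phi(x)(\lambda(\rho)^{1/\beta_1}\rho)^{-d}(u/\cdots)^d\cdot(\text{scaling})$; combined with $n(\rho)^{-1}=(\lambda(\rho)^{1/\beta_1}\rho)^{-d}$ the powers match and one is left with $\psi_G(\theta c_d\phi(x)u^d)$ to integrate against $u^{-\beta_1-1}du$. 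Then Fubini in $(x,u)$, the substitution $v=u^d$ (producing the factor $1/d$ and turning $u^{-\beta_1-1}du$ into $v^{-\gamma-1}dv$ since $\gamma=\beta_1/d$), and the identity $\int_{\mathbb{R}}\psi(mz)G(dm)$ integrated against $z^{-\gamma-1}dz$ — which is precisely the classical computation yielding a $\gamma$-stable exponent with scale $\sigma_\gamma$ and skewness $b_\gamma$ as defined in \eqref{bgamma} (see \cite{ST}) — produce the claimed limit.

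The rigorous control of the error terms is where the real work lies, and I expect the main obstacle to be twofold. First, one must justify replacing $\mu(B(x,\varepsilon))$ by its first-order term $c_d\varepsilon^d\phi(x)$ \emph{uniformly enough to integrate}: the pointwise Lebesgue differentiation statement is not sufficient on its own, and the standard device (as in \cite{Jcb}, Theorem 2.11 and the surrounding lemmas, and as used in \cite{KT}) is to approximate $\phi$ in $L^1\cap L^{\alpha}$ by smooth compactly supported functions, prove the convergence for those (where $\mu(B(x,\varepsilon))-c_d\varepsilon^d\phi(x)=O(\varepsilon^{d+1})$ locally uniformly), and then pass to the limit using the Lipschitz bound on $\psi_G$ together with the $L^1$ and $L^{\alpha}$ continuity of both sides — here the hypothesis $\beta_2<\alpha d$ is exactly what makes $L^1\cap L^\alpha\subset\mcal$ and keeps the relevant integrals finite. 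Second, one must dominate the contribution of large $u$ (equivalently large radii $r$, where the linearization $\mu(B(x,r))\sim c_dr^d\phi(x)$ fails): there one uses instead the crude bound $|\mu(B(x,r))|\le\|\phi\|_1$ and $|\psi_G(w)|\le K|w|^\alpha$ from Lemma \ref{Lem:g} to show, via the same Lemma \ref{lem:majf}-type estimates and the condition $\lambda(\rho)\rho^{\beta_1}\to 0$, that this piece is $O((\lambda(\rho)\rho^{\beta_1})^{\text{something positive}})\to 0$; the off-$B_1$ part is then handled by dominated convergence exactly as in the proofs of Theorems \ref{thm:lbs} and \ref{thm:is}. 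Finally, the Cramér–Wold device and the linearity of $\Mr$ and $\widetilde{Z}$ upgrade the one-dimensional convergence to the finite-dimensional convergence in $L^1(\Rd)\cap L^{\alpha}(\Rd)$, and the fact that $\widetilde{Z}(\mu)$ is a well-defined $\gamma$-stable integral follows from $\int_{\Rd}|\phi(x)|^\gamma 1_{B_1}(x)g(x)dx<+\infty$, which holds since $\phi\in L^\gamma$ (as $\gamma<\alpha$, interpolating between $L^1$ and $L^\alpha$) and $g\in L^\infty(\Rd)$.
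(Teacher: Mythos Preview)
Your overall architecture matches the paper's: the same change of variable $r\mapsto n(\rho)^{1/d}r$ (so that $n(\rho)^{1/d}/\rho=\lambda(\rho)^{1/\beta_1}\to+\infty$), the same identification of the pointwise limit $\psi_G(\theta c_d\phi(x)r^d)g(x)1_{B_1}(x)r^{-\beta_1-1}$ via Lebesgue differentiation and \eqref{cfr}, the same split into the on-$B_1$/off-$B_1$ pieces, and the same final computation turning $\int_{\mathbb{R}^+}\psi_G(\theta c_d\phi(x)r^d)r^{-\beta_1-1}dr$ into the $\gamma$-stable exponent with the constants $\sigma_\gamma$, $b_\gamma$.

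The one genuine difference is in how you propose to upgrade the a.e.\ Lebesgue differentiation to something integrable. You suggest approximating $\phi$ in $L^1\cap L^{\alpha}$ by smooth compactly supported functions and then passing to the limit by continuity. The paper instead invokes Lemma~4 of \cite{Kaj}, which gives not only the a.e.\ convergence $c_d^{-1}r^{-d}n(\rho)^{-1}\mu(B(x,n(\rho)^{1/d}r))\to\phi(x)$ but also the Hardy--Littlewood maximal function bound $\phi^*(x)=\sup_{v>0}c_d^{-1}v^{-d}|\mu|(B(x,v))\in L^{\alpha}(\Rd)$. This furnishes, in one stroke, a $\rho$- and $r$-uniform $L^{\alpha}$ majorant for the ball averages, so that the error $H_1$ (replacing the ball mass by $c_d r^d\phi(x)$ inside $\psi_G$) is handled directly by uniform integrability rather than through a density argument. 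Your approximation route is viable in principle, but it forces you to control the approximation error \emph{uniformly in $\rho$} on the pre-limit side, which is exactly the step the maximal-function device short-circuits; the paper's approach is therefore the cleaner of the two. Everything else in your sketch --- the small-$r$ versus large-$r$ splitting via Lemma~\ref{lem:majf}, the use of $|\psi_G|\le K|\cdot|^{\alpha}$ together with $\beta_1<\alpha d$ for the small-radius tail, the dominated-convergence treatment of $B_1^c$, and the Cram\'er--Wold conclusion --- is the same as in the paper.
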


\begin{Rq}
The limiting field $\widetilde{Z}$ is well defined since $d<\beta_1<\alpha d$ (see \cite{ST}). The condition $\beta_2<\alpha d$ implies that the volumes of the balls have an infinite variance. In other world, we need some balls to be big enough in order to obtain something significant at the limit.
\end{Rq}

\begin{proof}
Here again, we only prove the one-dimensional convergence. Indeed, combined with the linearity of our processes and the linear structure of $L^1(\mathbb{R}^d)\cap L^{\alpha }(\mathbb{R}^d)$, it implies the finite-dimensional convergence.

\bigskip

We make the change of variable $r\mapsto n(\rho)^{1/d}r$ in the log-characteristic function of $\frac{\Mr(\mu)}{n(\rho)}$ which gives us:
 \begin{displaymath}
 \begin{split}
 \I \psi_G&\left(\theta n(\rho)^{-1}\mu(B(x,n(\rho)^{1/d}r))\right)\frac{n(\rho)^{1/d}}{\rho}f_\rho\left(x,\frac{n(\rho)^{1/d}}{\rho}r\right)dxdr\\
 &=\I \psi_G \left(\theta n(\rho)^{-1}\mu(B(x,n(\rho)^{1/d}r))\right)\lambda(\rho)^{1/\beta_1} f_\rho\left(x,\lambda(\rho)^{1/\beta_1} r\right)dxdr.
\end{split} 
\end{displaymath}
\\
Note that under the assumption $\lambda(\rho)\rho^{\beta_1} \to 0$ when $\rho \to 0$ and Condition \eqref{cbeta}, $n(\rho)\to 0$ and $n(\rho)^{1/d}/\rho=\lambda(\rho)^{1/\beta_1} \to +\infty$ when $\rho \to 0$. Let $\mu(dz)=\phi(z)dz$ with $\phi \in L^1(\mathbb{R}^d)\cap L^{\alpha}(\mathbb{R}^d)$, then from Lemma 4 in \cite{Kaj}, when $n(\rho)\to 0$, 
\begin{equation} \label{cvphi}
\left| c_d^{-1}r^{-d}n(\rho)^{-1}\mu(B(x,n(\rho)^{1/d}r))- \phi(x)\right|\to 0
\end{equation}
$dx$-almost everywhere and $\phi^* \in L^{\alpha}(\mathbb{R}^d)$ where
\begin{equation}\label{cphi}
\phi^*(x)=\sup_{v>0}\bigg(c_d^{-1}v^{-d}\vert \mu \vert\big(B(x,v)\big)\bigg) .
\end{equation}
Using the Lipschitzian property of $\psi_G$ we have:
\begin{equation}\label{cvpsi}
\left| \psi_G\left(\theta \frac{\mu(B(x,n(\rho)^{1/d}r))}{n(\rho)}\right)-\psi_G(\theta\phi(x)c_dr^d)\right|\leq L \vert\theta \vert \left|\frac{\mu(B(x,n(\rho)^{1/d}r))}{n(\rho)} - \phi(x)c_dr^d\right|, 
\end{equation}
where $L=2\int_{\mathbb{R}}\vert m\vert G(dm)$, and thus $\left| \psi_G\left(\theta n(\rho)^{-1}\mu(B(x,n(\rho)^{1/d}r))\right)-\psi_G(\theta\phi(x)c_dr^d)\right|$ converges to $0$ when $\rho \to 0$.
We also have from \eqref{cfr} that:
\begin{displaymath}
\lim_{\rho \to 0} \lambda(\rho)^{1/\beta_1} f_\rho\left(x,\lambda(\rho)^{1/\beta_1} r\right)= \frac{g(x)1_{B_1}(x)}{r^{\beta(x)+1}}.
\end{displaymath}
Thus:
\begin{equation}\label{CV}
\begin{split}
&\lim_{\rho\to 0}\psi_G\left(\theta n(\rho)^{-1}\mu(B(x,n(\rho)^{1/d}r))\right)\lambda(\rho)^{1/\beta_1} f_\rho\left(x,\lambda(\rho)^{1/\beta_1} r\right)\\
= &\psi_G(\theta\phi(x)c_dr^d)\frac{g(x)1_{B_1}(x)}{r^{\beta(x)+1}}.
\end{split}
\end{equation}
We want to show that we can exchange the limit \eqref{CV} with the integration over $\mathbb{R}^+\times \Rd$. To see this, we write, $\forall r\in \mathbb{R}^+$, $\forall x\in \Rd$:
 \begin{multline}\label{majH}
 \left|\psi_G\left(\theta n(\rho)^{-1}\mu(B(x,n(\rho)^{1/d}r))\right)\lambda(\rho)^{1/\beta_1} f_\rho\left(x,\lambda(\rho)^{1/\beta_1} r\right)- \psi_G(\theta\phi(x)c_dr^d)\frac{g(x)1_{B_1}(x)}{r^{\beta(x)+1}}\right|\\
  \quad \leq H_1(x,r,\rho) + H_2(x,r,\rho),
  \end{multline}
  where
  \begin{displaymath}
  H_1(x,r,\rho)=\left| \psi_G\left(\theta n(\rho)^{-1}\mu(B(x,n(\rho)^{1/d}r))\right)- \psi_G\left(\theta\phi(x)c_dr^d\right)\right|  \left| \lambda(\rho)^{1/\beta_1}f_\rho(x,\lambda(\rho)^{1/\beta_1}r)\right|
  \end{displaymath}
  and 
  \begin{displaymath}
  H_2(x,r,\rho)=\left| \psi_G(\theta\phi(x)c_dr^d)\right| \left|\lambda(\rho)^{1/\beta_1} f_\rho\left(x,\lambda(\rho)^{1/\beta_1} r\right)- \frac{g(x)1_{B_1}(x)}{r^{\beta(x)+1}}\right|.
  \end{displaymath}
 \\
 First, let us focus on $H_1$. Let $\rho>0$ large enough such that $\lambda(\rho) >1$, then, using \eqref{cvpsi} and the bound induced by Condition \eqref{cfr}, $\forall r>A\lambda(\rho)^{-1/\beta_1}$, $\forall x\in \Rd$:
\begin{align}
H_1(x,r,\rho)&\leq   L \vert\theta \vert \left|n(\rho)^{-1}\mu(B(x,n(\rho)^{1/d}r))- \phi(x)c_dr^d\right| 2g(x) \frac{\lambda(\rho)^{1-\beta(x)/\beta_1}}{r^{\beta(x)+1}} \notag \\
&\leq 2 L \vert\theta \vert  c_dr^d\left|c_d^{-1} r^{-d} n(\rho)^{-1}\mu(B(x,n(\rho)^{1/d}r))- \phi(x)\right|g(x) \left(r^{-\beta_1-1}\vee r^{-\beta_2-1}\right),
\end{align}
where $a\vee b=\max(a,b)$. \\
Moreover, $\forall r\in \mathbb{R}^+$
\begin{displaymath}
\begin{split}
\int_{\Rd} &\left|c_d^{-1} r^{-d} n(\rho)^{-1}\mu(B(x,n(\rho)^{1/d}r))- \phi(x)\right|g(x) dx \\
&\leq \Vert g\Vert_{\infty}\int_{\Rd} \left|c_d^{-1} r^{-d} n(\rho)^{-1}\mu(B(x,n(\rho)^{1/d}r))- \phi(x)\right|  dx.
\end{split}
\end{displaymath}
The integrand $\left|c_d^{-1} r^{-d} n(\rho)^{-1}\mu(B(x,n(\rho)^{1/d}r))- \phi(x)\right|$ converges to $0$ $dx$-almost everywhere (see \eqref{cvphi}). Since its $L^{\alpha}$-norm is bounded by $\Vert \phi^*\Vert_{L^{\alpha}}+\Vert \phi\Vert_{L^{\alpha}}$, it is uniformly integrable in $r$ and $\rho$ and as a consequence:
\begin{displaymath}
\lim_{\rho \to 0} \int_{\Rd} \left|c_d^{-1} r^{-d} n(\rho)^{-1}\mu(B(x,n(\rho)^{1/d}r))- \phi(x)\right|  dx =0.
\end{displaymath}
\\
In particular, 
\begin{multline}
\forall \varepsilon >0, \: \exists \rho_0>0, \: \forall \rho < \rho_0, \forall r>A\lambda(\rho)^{-1/\beta_1}, \\
\int_{\Rd} \left|c_d^{-1} r^{-d} n(\rho)^{-1}\mu(B(x,n(\rho)^{1/d}r))- \phi(x)\right|g(x) dx<\varepsilon.
\end{multline}
\\
Let $\varepsilon>0$ and $\rho<\rho_0$ such that $\lambda(\rho) >1$, we have:
\begin{equation}\label{cv1}
\begin{split}
\int_{A\lambda(\rho)^{-1/\beta_1}}^{+\infty}& \int_{\Rd} H_1(x,r,\rho)dxdr \\
&\leq 2 L \vert\theta \vert  c_d \int_{A\lambda(\rho)^{-1/\beta_1}}^{+\infty} \left(r^{d-\beta_1-1}\vee r^{d-\beta_2-1}\right) \int_{\Rd} \left|\frac{\mu(B(x,n(\rho)^{1/d}r))}{c_d r^{d} n(\rho)}- \phi(x)\right|g(x) dxdr \\
&\leq 2 \varepsilon L \vert\theta \vert c_d \left( \int_{A\lambda(\rho)^{-1/\beta_1}\wedge 1}^{1}  r^{d-\beta_2-1}dr+ \int_{1\vee A\lambda(\rho)^{-1/\beta_1}}^{+\infty} r^{d-\beta_1-1}dr\right)<+\infty 
\end{split}
\end{equation}
since $\beta_1-d+1 >1$.
\\
On the other hand, since $\vert \psi_G(v)\vert\leq K \vert v\vert^{\alpha}$
\begin{displaymath}
\begin{split}
\vert \psi_G\left(\theta n(\rho)^{-1}\mu(B(x,n(\rho)^{1/d}r))\right)-& \psi_G\left(\theta\phi(x)c_dr^d\right)\vert \\
&\leq K\vert \theta\vert^{\alpha}c_d^{\alpha}r^{\alpha d} \left( \frac{\vert\mu(B(x,n(\rho)^{1/d}r))\vert^{\alpha}}{(c_dr^{d}n(\rho))^{\alpha}}+\vert\phi(x)\vert^{\alpha}\right)
\end{split}
\end{displaymath}
and then
\begin{equation}
\begin{split}
H_1(x,r,\rho)&\leq K\vert \theta\vert^{\alpha}c_d^{\alpha}r^{\alpha d} \left( \frac{\vert\mu(B(x,n(\rho)^{1/d}r))\vert^{\alpha}}{(c_dr^{d}n(\rho))^{\alpha}}+\vert\phi(x)\vert^{\alpha}\right) \lambda(\rho)^{1/\beta_1}\Vert f_\rho(\cdot, \lambda(\rho)^{1/\beta_1}r)\Vert_{\infty}\\
&\leq K\vert \theta\vert^{\alpha}c_d^{\alpha}r^{\alpha d} \left( \frac{\vert\mu(B(x,n(\rho)^{1/d}r))\vert^{\alpha}}{(c_dr^{d}n(\rho))^{\alpha}}+\vert\phi(x)\vert^{\alpha}\right) \lambda(\rho)^{1+1/\beta_1}\Vert f(\cdot, \lambda(\rho)^{1/\beta_1}r)\Vert_{\infty}. 
\end{split}
\end{equation}
\\
Then by integration,  
\begin{equation}
\begin{split}
&\int_{0}^{A/\lambda(\rho)^{1/\beta_1}} \int_{\Rd} H_1(x,r,\rho) dxdr\\
 \leq &K\vert \theta\vert^{\alpha}c_d^{\alpha}\lambda(\rho)^{1+1/\beta_1} \!\! \int_0^{A/\lambda(\rho)^{1/\beta_1}} \! \!\! \!  r^{\alpha d}\Vert f(\cdot, \lambda(\rho)^{1/\beta_1}r)\Vert_{\infty}\int_{\Rd}\!\!   \left( \frac{\vert\mu(B(x,n(\rho)^{1/d}r))\vert^{\alpha}}{(c_dr^{d}n(\rho))^{\alpha}}+\vert\phi(x)\vert^{\alpha} \!\!  \right)  dxdr.
\end{split}
\end{equation}
But, 
\begin{displaymath}
\int_{\Rd}\left( \frac{\vert\mu(B(x,n(\rho)^{1/d}r))\vert^{\alpha}}{(c_dr^{d}n(\rho))^{\alpha}}+\vert\phi(x)\vert^{\alpha}\right)  dx\leq  \Vert \phi^*\Vert_{\alpha }^{\alpha} +\Vert \phi\Vert_{\alpha}^{\alpha} .
\end{displaymath}
\\
So finally,
\begin{align}\label{cv2}
&\int_{0}^{A\lambda(\rho)^{-1/\beta_1}}\int_{\Rd}H_1(x,r,\rho)dx dr \notag \\
\leq & K\vert \theta\vert^{\alpha}c_d^{\alpha} \left( \Vert \phi^*\Vert_{\alpha }^{\alpha} +\Vert \phi\Vert_{\alpha}^{\alpha}\right)\lambda(\rho)^{1+1/\beta_1}\int_0^{A\lambda(\rho)^{-1/\beta_1}}r^{\alpha d}\Vert f(\cdot, \lambda(\rho)^{1/\beta_1}r)\Vert_{\infty}dr \notag \\
\leq & K\vert \theta\vert^{\alpha}c_d^{\alpha} \left( \Vert \phi^*\Vert_{\alpha }^{\alpha} +\Vert \phi\Vert_{\alpha}^{\alpha}\right)\lambda(\rho)^{1-(\alpha d)/\beta_1} \int_0^{ A} r^{\alpha d}\Vert f(\cdot,r)\Vert_{\infty}dr \longrightarrow 0,
\end{align}
when $\rho \to 0$ (i.e. $\lambda(\rho) \to +\infty$) using Lemma \ref{lem:majf} and $\beta_1< \alpha d$.
\\
Thus \eqref{cv1} and \eqref{cv2} entail together
\begin{equation}\label{Cv1}
\lim_{\rho \to 0} \int_{0}^{+\infty}\int_{\Rd}H_1(x,r,\rho)dx dr=0.
\end{equation}
\\
We focus now on $H_2$. We treat two cases apart: $x\in B_1$ and $x\in B_1^c$. First, we consider $x\in B_1$. In that case, $$H_2(x,r,\rho)=\left| \psi_G(\theta\phi(x)c_dr^d)\right| \left|\lambda(\rho)^{1/\beta_1} f_\rho\left(x,\lambda(\rho)^{1/\beta_1} r\right)- \frac{g(x)}{r^{\beta_1+1}}\right|.$$  
\\
Performing a change of variable in \eqref{cfbis} and integrating we obtain:
\begin{multline}\label{cv3}
\forall \varepsilon >0, \: \exists A>0, \forall \rho>0, \\
 \int_{A\lambda(\rho)^{-1/\beta_1}}^{+\infty} \int_{B_1} H_2(x,r,\rho) dxdr \leq \varepsilon \int_{0}^{+\infty}\int_{B_1} \left| \psi_G(\theta\phi(x)c_dr^d)\right|  \frac{ g(x)}{r^{\beta_1+1}} dxdr.
\end{multline}
\\
Using the facts that $\vert\psi_G(u)\vert\leq K\min(\vert u\vert, \vert u\vert^{\alpha})$ (see \cite{Jcb}), $\phi \in L^q\cap L^{\alpha q}$ and Hölder's inequality we have, $\forall r \in \mathbb{R}^+$:
\begin{displaymath}
\begin{split}
\int_{B_1} &\left| \psi_G(\theta\phi(x)c_dr^d)\right|  \frac{g(x) }{r^{\beta_1+1}} dx\\
&\leq K \int_{\Rd} \min\left( \vert \theta\phi(x)\vert c_dr^d, \vert \theta \phi(x) \vert^{\alpha} c_d^{\alpha}r^{\alpha d}\right) g(x) dx r^{-\beta_1-1}\\
&\leq K \Vert g\Vert_\infty \min \left(\vert\theta \vert c_dr^d  \int_{\Rd} \vert\phi(x) \vert dx, \vert \theta \vert^{\alpha} c_d^{\alpha}r^{\alpha d}\int_{\Rd}  \vert \phi(x) \vert^{\alpha} dx\right) r^{-\beta_1-1}\\
&\leq K\Vert g\Vert_\infty \max \left(\vert\theta \vert c_d \Vert\phi \Vert_{1} , \vert \theta \vert^{\alpha} c_d^{\alpha}\Vert\phi \Vert_{\alpha }^{\alpha} \right)\left(r^{d-\beta_1-1}\wedge r^{\alpha d-\beta_1-1}\right) .\\
\end{split}
\end{displaymath}
\\
Note that $\left(r^{d-\beta_1-1}\wedge r^{\alpha d-\beta_1-1}\right)=r^{d-\beta_1-1}$ when $r\geq 1$ and  $\left(r^{d-\beta_1-1}\wedge r^{\alpha d-\beta_1-1}\right)=r^{\alpha d -\beta_1-1}$ when $r\leq 1$. Using Condition \eqref{cbeta}, $\beta_1-\alpha d+1<1<\beta_1-d+1$, $\left(r^{d-\beta_1-1}\wedge r^{\alpha d-\beta_1-1}\right)$ is integrable over $\mathbb{R}^+$ and consequently 
\begin{displaymath}
\int_{0}^{+\infty}\int_{B_1} \left| \psi_G(\theta\phi(x)c_dr^d)\right| \frac{g(x) }{r^{\beta_1+1}} dxdr<+\infty.
\end{displaymath}
\\
On the other hand, $\forall x \in B_1$, $\forall r\in \mathbb{R}^+$:
\begin{displaymath}
\begin{split}
H_2(x,r,\rho)&\leq K\vert\theta\vert^{\alpha} c_d^{\alpha}\vert\phi(x)\vert^{\alpha}r^{\alpha d}\left(\lambda(\rho)^{1+1/\beta_1}\Vert f(\cdot, \lambda(\rho)^{1/\beta_1}r)\Vert_{\infty}+\Vert g\Vert_\infty r^{-\beta_1-1}\right).
\end{split}
\end{displaymath}
 Since $\Vert \phi \Vert_{\alpha }^{\alpha}<+\infty$, we are only interested in the integral
\begin{multline*}
\int_0^{A\lambda(\rho)^{-1/\beta_1}} r^{\alpha d}\left(\lambda(\rho)^{1+1/\beta_1}\Vert f(\cdot, \lambda(\rho)^{1/\beta_1}r)\Vert_{\infty}+\Vert g\Vert_\infty r^{-\beta_1-1}\right)dr\\=\lambda(\rho)^{1-(\alpha d)/\beta_1}\int_0^A r^{\alpha d}\Vert f(\cdot, r)\Vert_{\infty}dr+\Vert g\Vert_\infty\int_0^{A\lambda(\rho)^{-1/\beta_1}}r^{\alpha d-\beta_1-1}dr.
\end{multline*}
\\
Note that, since $\alpha d-\beta_1-1>-1$, $r^{\alpha d-\beta_1-1}$ is integrable in $0$ and  $$ \int_0^{A\lambda(\rho)^{-1/\beta_1}}r^{\alpha d-\beta_1-1}dr\to 0$$ when $\rho \to 0$. Since $\int_0^A r^{\alpha d}\Vert f(\cdot, r)\Vert_{\infty}dr$ is finite (using Lemma \ref{lem:majf}), we obtain finally that:
\begin{equation}\label{cv4}
\lim_{\rho \to 0}\int_0^{A\lambda(\rho)^{-1/\beta_1}} \int_{B_1} H_2(x,r,\rho)dr= 0.
\end{equation}
 Combining \eqref{cv3} and \eqref{cv4}, we obtain that:
 \begin{equation}\label{Cv2}
\lim_{\rho\to 0} \int_{\mathbb{R}^+}\int_{B_1} H_2(x,r,\rho)dxdr=0.
 \end{equation}
\\
We deal now with the case $x\in B_1^c$. In that case $$H_2(x,r,\rho)=\left| \psi_G(\theta\phi(x)c_dr^d)\right| \left|\lambda(\rho)^{1/\beta_1} f_\rho\left(x,\lambda(\rho)^{1/\beta_1} r\right)\right|.$$
\\
Let $\rho_1$ be such that, $\forall \rho>\rho_1$, $\lambda(\rho)>1$. From Condition \eqref{cfr}, we deduce:
\begin{multline*}
\exists A>0,\: \forall \rho >\rho_1, \forall r>A\lambda(\rho)^{-1/\beta_1}, \: \forall x \in B_1^c, \\
 H_2(x,r,\rho)\leq 2\left| \psi_G(\theta\phi(x)c_dr^d)\right| \Vert g\Vert_\infty \left(r^{-\beta_1-1}\vee r^{-\beta_2-1}\right).
\end{multline*}
\\
Using $\vert\psi_G(u)\vert\leq K\min(\vert u\vert, \vert u\vert^{\alpha})$ and $\phi \in L^1(\Rd)\cap L^{\alpha }(\Rd)$ we have:
\begin{displaymath}
\begin{split}
\int_{B_1^c} &\left| \psi_G(\theta\phi(x)c_dr^d)\right| \Vert g \Vert_\infty \left(r^{-\beta_1-1}\vee r^{-\beta_2-1}\right) dx\\
&\leq K\Vert g\Vert_\infty \max \left(\vert\theta \vert c_d \Vert\phi \Vert_{1} , \vert \theta \vert^{\alpha} c_d^{\alpha}\Vert\phi \Vert_{\alpha } \right)\left(r^d\wedge r^{\alpha d}\right) \left(r^{-\beta_1-1}\vee r^{-\beta_2-1}\right).\\
\end{split}
\end{displaymath}
\\
Under Condition \eqref{cbeta} and since $\beta_2< \alpha d$,  $\left(r^d\wedge r^{\alpha d}\right) \left(r^{-\beta_1-1}\vee r^{-\beta_2-1}\right)$ is integrable over $\mathbb{R}^+$ and consequently 
\begin{displaymath}
\int_{0}^{+\infty}\int_{B_1^c} \left| \psi_G(\theta\phi(x)c_dr^d)\right| g(x)  \left(r^{-\beta_1-1}\vee r^{-\beta_2-1}\right)dxdr<+\infty.
\end{displaymath}
\\
Then by the dominated convergence theorem:
\begin{equation}\label{cv5}
\lim_{\rho \to 0} \int_{A\lambda(\rho)^{-1/\beta_1}}^{+\infty}\int_{B_1^c} H_2(x,r,\rho) dxdr =0.
\end{equation}
\\
On the other hand, for $\rho > \rho_1$ (i.e. $\lambda(\rho)>1$), $\forall r\in \mathbb{R}^+$, $\forall x\in B_1^c$:
\begin{displaymath}
\begin{split}
H_2(x,r,\rho)&\leq K\vert\theta\vert^{\alpha} c_d^{\alpha}\vert\phi(x)\vert^{\alpha}r^{\alpha d}\lambda(\rho)^{1+1/\beta_1}\Vert f(\cdot, \lambda(\rho)^{1/\beta_1}r)\Vert_{\infty}.
\end{split}
\end{displaymath}
 Since $ \Vert \phi \Vert_{\alpha }^{\alpha}<+\infty $ and
\begin{displaymath}
\int_0^{A\lambda(\rho)^{-1/\beta_1}} r^{\alpha d}\lambda(\rho)^{1+1/\beta_1}\Vert f(\cdot, \lambda(\rho)^{1/\beta_1}r)\Vert_{\infty}dr=\lambda(\rho)^{1-(\alpha d)/\beta_1}\int_0^A r^{\alpha d}\Vert f(\cdot, r)\Vert_{\infty}dr\to 0,
\end{displaymath}
when $\rho \to 0$ using Lemma \ref{lem:majf}, we obtain:

\begin{equation}\label{cv6}
\lim_{\rho \to 0}\int_0^{A\lambda(\rho)^{-1/\beta_1}} \int_{B_1^c} H_2(x,r,\rho)dr= 0.
\end{equation}
 Combining \eqref{cv5} and \eqref{cv6} we obtain that:
 \begin{equation}\label{Cv3}
\lim_{\rho\to 0} \int_{\mathbb{R}^+}\int_{B_1^c} H_2(x,r,\rho)dxdr=0,
 \end{equation}
which, combined now with \eqref{Cv2} yields that:

 \begin{equation}\label{Cv4}
\lim_{\rho\to 0} \int_{\mathbb{R}^+}\int_{\Rd} H_2(x,r,\rho)dxdr=0.
 \end{equation}
\\
From \eqref{majH}, \eqref{Cv1} and \eqref{Cv4}, we obtain:
\begin{multline}\label{cc1}
\lim_{\rho\to 0} \I \psi_G\left(\theta n(\rho)^{-1}\mu(B(x,n(\rho)^{1/d}r))\right)\frac{n(\rho)^{1/d}}{\rho}f_\rho\left(x,\frac{n(\rho)^{1/d}}{\rho}r\right)dxdr\\
=\int_{\mathbb{R}^+}\int_{B_1} \psi_G\left( \theta \phi(x) c_dr^d\right) g(x) r^{-\beta_1-1}drdx.
\end{multline}
\bigskip

We conclude the proof by proving that the right-hand side in \eqref{cc1} is the $\log$-characteristic \\ function of $\widetilde{Z}(\mu)$. Let us split the integration over $B_1$ into $\left\lbrace x\in B_1: \: \theta \phi(x) \geq 0\right\rbrace$ and \\ $\left\lbrace x\in B_1: \: \theta \phi(x) < 0\right\rbrace$ and perform a change of variable. We obtain:
\begin{displaymath}
\int_{B_1} \psi_G\left( \theta \phi(x) c_dr^d\right) g(x) r^{-\beta_1-1}drdx=D\int_{B_1} (\theta \phi(x))_+^{\gamma}g(x)dx+\bar{D}\int_{B_1}(\theta \phi(x))_-^{\gamma}g(x)dx,
\end{displaymath}
where $D=\frac{c_d^{\gamma}}{d}\int_{\mathbb{R}^+}\psi_G(r)r^{-\gamma-1}dr$ and $\bar{D}$ denotes its complex conjugate. We deduce:

\begin{displaymath}
\lim_{\rho \to 0} \log\left( \varphi_{n(\rho)^{-1}\Mr(\mu)}(\theta)\right)=-\sigma_{\phi}^{\gamma}\vert \theta \vert^{\gamma}\left(1+iB_{\phi}\epsilon(\theta) \tan\left(\frac{\pi \gamma}{2}\right)\right),
\end{displaymath}
where:
\begin{displaymath}
\sigma_{\phi}^{\gamma}=\sigma_{\gamma}^{\gamma}\int_{B_2}\vert \phi(x)\vert^{\gamma}g(x)dx,
\end{displaymath}
and, since $\gamma \in \left]1,2\right[$,
\begin{displaymath}
\begin{split}
B_{\phi}&=\frac{\int_{\mathbb{R}^+}(r-\sin(r))r^{-\gamma-1}dr}{\tan(\pi\gamma/2)\int_{\mathbb{R}^+}(1-\cos(r))r^{-\gamma-1}dr} \frac{\int_{\mathbb{R}}\epsilon(m)\vert m\vert^{\gamma}G(dm)}{\int_{\mathbb{R}}\vert m\vert^{\gamma}G(dm)} \frac{\int_{B_1}\epsilon(\phi(x))\vert\phi(x)\vert^{\gamma}g(x)dx}{\int_{B_1} \vert \phi(x)\vert^{\gamma}g(x)dx}\\
&=b_{\gamma}\frac{\int_{B_1}\epsilon(\phi(x))\vert\phi(x)\vert^{\gamma}g(x)dx}{\int_{B_1} \vert \phi(c)\vert^{\gamma}g(x)dx},
\end{split}
\end{displaymath}
because of the following identity, see $\left[\mbox{\cite{Fel}, Lemma 2}\right]$.
\begin{displaymath}
\int_{\mathbb{R}^+}(r-\sin(r))r^{-\gamma-1}dr=-\tan(\pi\gamma/2)\int_{\mathbb{R}^+}(1-\cos(r))r^{-\gamma-1}dr.
\end{displaymath}
\\
This achieves the proof of Theorem \ref{thm:sbs}. 
 
\end{proof}

\bigskip

The limiting field $\widetilde{Z}$ enjoys similar properties as $Z$ and $J$.
\begin{Prop} \quad
\begin{enumerate}
\item When $g$ is radial and $B_1$ is closed under rotation, the field $\widetilde{Z}$ is isotropic.
\item Suppose there exists $H\in \mathbb{R}$ such that, for any $a\in \mathbb{R}^+$, for any $x\in \Rd$, $g(ax)=a^Hg(x)$. If $B_1$ is closed under dilatation, $\widetilde{Z}$ is self-similar with index $(H+d-\beta_1)/\gamma$.
\end{enumerate}
\end{Prop}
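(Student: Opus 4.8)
The plan is to follow the pattern of the proof of Proposition \ref{propZ}, working directly with the $\log$-characteristic function of $\widetilde{Z}$ that was obtained at the very end of the proof of Theorem \ref{thm:sbs}. Recall that for $\mu(dx)=\phi(x)dx$ with $\phi\in L^1(\Rd)\cap L^\alpha(\Rd)$,
\begin{displaymath}
\log\left(\varphi_{\widetilde{Z}(\mu)}(\theta)\right)=-\sigma_\phi^\gamma\,|\theta|^\gamma\left(1+iB_\phi\,\epsilon(\theta)\tan(\pi\gamma/2)\right),
\end{displaymath}
where $\sigma_\phi^\gamma=\sigma_\gamma^\gamma\int_{B_1}|\phi(x)|^\gamma g(x)\,dx$ and $B_\phi=b_\gamma\,\dfrac{\int_{B_1}\epsilon(\phi(x))|\phi(x)|^\gamma g(x)\,dx}{\int_{B_1}|\phi(x)|^\gamma g(x)\,dx}$. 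Since $\widetilde{Z}$ is linear and $L^1(\Rd)\cap L^\alpha(\Rd)$ is preserved by rotations and dilatations, the Cramér–Wold device (used exactly as in Theorem \ref{thm:sbs}) reduces both assertions to one-dimensional identities in law. Concretely, it suffices to check that $\log\varphi_{\widetilde{Z}(\Theta\mu)}=\log\varphi_{\widetilde{Z}(\mu)}$ for part 1, and that $\log\varphi_{\widetilde{Z}(\mu_a)}(\theta)=\log\varphi_{\widetilde{Z}(\mu)}\big(a^{(H+d-\beta_1)/\gamma}\theta\big)$ for part 2.

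For part 1, I would note that $\Theta\mu$ has density $\phi\circ\Theta^{-1}$, and perform the change of variables $y=\Theta^{-1}x$ in the two integrals defining $\sigma_\phi^\gamma$ and $B_\phi$. Using $|\det\Theta|=1$, $g(\Theta y)=g(y)$ (as $g$ is radial) and $1_{B_1}(\Theta y)=1_{B_1}(y)$ (as $B_1$ is closed under rotation), one gets $\sigma_{\phi\circ\Theta^{-1}}^\gamma=\sigma_\phi^\gamma$ and $B_{\phi\circ\Theta^{-1}}=B_\phi$, so the $\log$-characteristic functions coincide and $\widetilde{Z}(\Theta\mu)\stackrel{fdd}{=}\widetilde{Z}(\mu)$.

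For part 2, $\mu_a$ has density $\phi_a(x)=a^{-d}\phi(x/a)$. The change of variables $y=x/a$, together with $g(ay)=a^Hg(y)$, the dilatation invariance $1_{B_1}(ay)=1_{B_1}(y)$, the identity $\epsilon(a^{-d}t)=\epsilon(t)$, and the relation $\beta_1=\gamma d$ (so that $d-\gamma d+H=d+H-\beta_1$), gives $\sigma_{\phi_a}^\gamma=a^{d+H-\beta_1}\sigma_\phi^\gamma$ and $B_{\phi_a}=B_\phi$. On the other hand, since $a^{(H+d-\beta_1)/\gamma}>0$, replacing $\theta$ by $a^{(H+d-\beta_1)/\gamma}\theta$ in the formula above pulls out a factor $a^{H+d-\beta_1}$ from $|\theta|^\gamma$ and leaves $\epsilon(\theta)$ unchanged. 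The two expressions then match, which is precisely the self-similarity of $\widetilde{Z}$ with index $(H+d-\beta_1)/\gamma$.

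No step here is genuinely hard: the computation is the same change of variables used for Proposition \ref{propZ}. The only points requiring care are the correct normalizing power $a^{-d}$ in the density of $\mu_a$ (it is $a^{-d}\phi(\cdot/a)$, not $\phi(a\cdot)$), the bookkeeping of exponents through $\beta_1=\gamma d$, and the reduction of the finite-dimensional statements to the one-dimensional ones via the linearity of $\widetilde{Z}$ (also ensuring that $\Theta\mu$ and $\mu_a$ still lie in $L^1(\Rd)\cap L^\alpha(\Rd)$, which is immediate).
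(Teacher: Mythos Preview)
Your proof is correct and follows exactly the approach the paper has in mind: the paper does not spell out a proof for this proposition, but introduces it by saying that $\widetilde{Z}$ ``enjoys similar properties as $Z$ and $J$'', referring the reader to the change-of-variable arguments already given for Proposition~\ref{propZ} and for the proposition on $J_\ell$. Your computation carries out precisely that change of variables in the $\log$-characteristic function of $\widetilde{Z}(\mu)$, with the correct bookkeeping (density $a^{-d}\phi(\cdot/a)$ for $\mu_a$, the identity $\gamma d=\beta_1$, and the Cram\'er--Wold reduction), so there is nothing to add.
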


\bigskip

\begin{Rq}
As in Remarks \ref{rq:lbs} and \ref{rq:is}, Theorem \ref{thm:sbs} recovers Theorem 2.16 in \cite{Jcb} when $g=1$ and $\beta$ is a constant function.
\end{Rq}

\subsection{Bridging between the large-balls scaling and the small-balls scaling}

In this section, we show that the intermediate process $J_\ell$ obtained in the intermediate scaling in Theorem \ref{thm:is} can be seen as a bridge between the process $Z$ obtained in the large-balls scaling in Theorem \ref{thm:lbs} and the process $\widetilde{Z}$ obtained in the small-balls scaling in Theorem \ref{thm:sbs}. Such bridging behavior has been evidenced for the Telecom Process in \cite{Gai}.

\begin{Thm}\label{thm:bri}
Suppose $B_1$ has a non-zero Lebesgue measure, then:
\begin{enumerate}
\item When $\ell\to +\infty$ $$\frac{J_\ell(\cdot)}{\ell^{1/\alpha}} \stackrel{\mcal}{\longrightarrow}Z(\cdot);$$ \label{thm:br1}
\item When $\ell\to 0$ $$\frac{J_\ell(\cdot)}{\ell^{d/\beta_1}} \stackrel{L^1(\mathbb{R}^d)\cap L^{\alpha }(\mathbb{R}^d)}{\longrightarrow} \widetilde{Z}(\cdot).$$
\end{enumerate}
\end{Thm}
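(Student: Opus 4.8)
The plan is to prove both statements at the level of log-characteristic functions, exactly as was done for Theorems \ref{thm:lbs}, \ref{thm:is} and \ref{thm:sbs}, exploiting that $J_\ell$ is an explicit compensated Poisson integral with a known exponent. Recall from the proof of Theorem \ref{thm:is} that for $\mu \in \mcal$,
\begin{displaymath}
\log\left(\varphi_{J_\ell(\mu)}(\theta)\right)=\ell \I \psi_G\left(\theta\mu(B(x,r))\right)\frac{g(x)1_{B_1}(x)}{r^{\beta_1+1}}dxdr.
\end{displaymath}
For part \eqref{thm:br1}, dividing by $\ell^{1/\alpha}$ amounts to replacing $\theta$ by $\theta\ell^{-1/\alpha}$ in the exponent and multiplying by $\ell$; so one must show
\begin{displaymath}
\ell \I \psi_G\left(\theta\ell^{-1/\alpha}\mu(B(x,r))\right)\frac{g(x)1_{B_1}(x)}{r^{\beta_1+1}}dxdr \longrightarrow \log\left(\varphi_{Z(\mu)}(\theta)\right)
\end{displaymath}
as $\ell\to+\infty$, where the right-hand side is \eqref{fcz}. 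Since $\ell^{-1/\alpha}\to 0$, Lemma \ref{Lem:g} gives the pointwise asymptotic $\ell\,\psi_G(\theta\ell^{-1/\alpha}u)\to -\sigma^\alpha|\theta u|^\alpha(1-i\epsilon(\theta u)\tan(\pi\alpha/2)b)$ for each fixed $u$; because $|\theta\ell^{-1/\alpha}\mu(B(x,r))|\le \theta\ell^{-1/\alpha}|\mu|(\Rd)$, the convergence is uniform in $(x,r)$ on the region $B_1\times\mathbb{R}^+$, and the bound $|\psi_G(v)|\le K|v|^\alpha$ gives the uniform domination $\ell|\psi_G(\theta\ell^{-1/\alpha}u)|\le K|\theta u|^\alpha$, whose integral against $g(x)1_{B_1}(x)r^{-\beta_1-1}$ is finite by Proposition \ref{propm}.\eqref{itlin}. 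Dominated convergence then yields the claim, and the Cramér--Wold device upgrades it to finite-dimensional convergence on $\mcal$.

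**Part 2.** For the limit $\ell\to 0$ one rescales instead by $\ell^{d/\beta_1}=\ell^{1/\gamma}$ with $\gamma=\beta_1/d$, so the exponent becomes $\ell \I \psi_G(\theta\ell^{-1/\gamma}\mu(B(x,r)))\,g(x)1_{B_1}(x)r^{-\beta_1-1}dxdr$. The natural move, mirroring the proof of Theorem \ref{thm:sbs}, is the change of variable $r\mapsto \ell^{1/\beta_1}r$ (equivalently $r\mapsto (\ell^{1/\gamma})^{1/d}r$): the factor $r^{-\beta_1-1}dr$ produces $\ell^{-1}\cdot\ell^{1/\beta_1}$ after accounting for the measure, and together with $\mu(B(x,\ell^{1/\beta_1}r))$ one gets, for $\mu(dx)=\phi(x)dx\in L^1\cap L^\alpha$,
\begin{displaymath}
\I \psi_G\!\left(\theta\,\ell^{-1/\gamma}\mu(B(x,\ell^{1/\beta_1}r))\right)\frac{g(x)1_{B_1}(x)}{r^{\beta_1+1}}dxdr.
\end{displaymath}
Since $\ell^{-1/\gamma}=\ell^{-d/\beta_1}$ and $\mu(B(x,\ell^{1/\beta_1}r))\sim \phi(x)c_d(\ell^{1/\beta_1}r)^d=\phi(x)c_d\ell^{d/\beta_1}r^d$ by Lemma 4 in \cite{Kaj} (used already around \eqref{cvphi}), the argument of $\psi_G$ converges $dx$-a.e. to $\theta\phi(x)c_dr^d$. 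One then reuses verbatim the domination set-up from the proof of Theorem \ref{thm:sbs}: the Lipschitz bound \eqref{cvpsi} controls the difference via $\|\phi^*\|_{L^\alpha}+\|\phi\|_{L^\alpha}$ for large $r$, while $|\psi_G(v)|\le K\min(|v|,|v|^\alpha)$ handles small $r$, giving an $\ell$-uniform integrable bound of the form $K|\theta|^\alpha c_d^\alpha(\|\phi^*\|_\alpha^\alpha+\|\phi\|_\alpha^\alpha)(r^{d-\beta_1-1}\wedge r^{\alpha d-\beta_1-1})$, integrable on $\mathbb{R}^+$ exactly because $d<\beta_1<\alpha d$. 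Dominated convergence produces the limit $\I \psi_G(\theta\phi(x)c_dr^d)g(x)1_{B_1}(x)r^{-\beta_1-1}dxdr$, which in the proof of Theorem \ref{thm:sbs} was identified with $\log(\varphi_{\widetilde Z(\mu)}(\theta))$; Cramér--Wold again gives the finite-dimensional statement on $L^1(\Rd)\cap L^\alpha(\Rd)$.

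**Main obstacle.** Neither limit requires reopening the delicate regime analysis of Section \ref{sec:thm}, since the tail condition \eqref{cfr} has already been absorbed into the clean expression for $\log(\varphi_{J_\ell(\mu)})$; the only real work is bookkeeping the scaling exponents and re-invoking the correct uniform integrable bounds. The one point deserving care is part 2: because $\mu$ there is only assumed in $L^1\cap L^\alpha$ rather than in $\mcal$, one cannot cite Proposition \ref{propm}.\eqref{itlin} directly and must instead lean on the maximal-function estimate \eqref{cphi} and the Hölder/interpolation bounds from the proof of Theorem \ref{thm:sbs} to justify the domination near $r=0$; this is precisely the step where the hypothesis $\beta_2<\alpha d$ (hence $\beta_1<\alpha d$) is consumed, and it is the place most likely to need a careful restatement rather than a one-line reference.
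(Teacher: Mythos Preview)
Your proposal is correct and follows essentially the same approach as the paper: for part~\ref{thm:br1} the paper also works at the level of the log-characteristic function, invokes Lemma~\ref{Lem:g} to get the uniform asymptotic of $\ell\,\psi_G(\ell^{-1/\alpha}\theta\mu(B(x,r)))$, and integrates it against $g(x)1_{B_1}(x)r^{-\beta_1-1}$; for part~2 the paper performs the same change of variable $r\mapsto \ell^{1/\beta_1}r$, obtains the same pointwise limit via \eqref{cvphi}, and then simply refers back to the domination arguments leading to \eqref{cc1} in the proof of Theorem~\ref{thm:sbs}. Your write-up is in fact slightly more explicit than the paper's on the domination (the paper just says ``using the same arguments we used to obtain \eqref{cc1}''), and your remark that the large-$r$ control really uses $\phi\in L^1$ (via $\int|\mu(B(x,r))|\,dx\le c_d r^d\|\phi\|_1$) rather than Proposition~\ref{propm} is exactly the right caveat.
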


\begin{Rq}
In dimension one, when $f(x,r)=f(r)$, Theorem \ref{thm:bri} recovers Proposition 1 and Proposition 2 in \cite{Gai}. Point \ref{thm:br1} in Theorem \ref{thm:bri} also recovers Proposition 2.13 in \cite{Jcb} when $f(x,r)=f(r)$ (in any dimension).
\end{Rq}

\begin{proof}The proof of both convergences follows the proofs of Theorem \ref{thm:lbs} and Theorem \ref{thm:sbs}.
\begin{enumerate}
\item Let $\mu\in \mcal$. We first recall the $\log$-characteristic function of $\frac{J_\ell(\mu)}{\ell^{1/\alpha}} $:
\begin{displaymath}
\log\left(\varphi_{\frac{J_\ell(\mu)}{\ell^{1/\alpha}} }(\theta)\right)=\I \psi_G\left( l^{-1/\alpha}\theta\mu(B(x,r))\right) \ell\frac{g(x)1_{B_1}(x)}{r^{\beta_1+1}}dxdr.
\end{displaymath}
When $l\to +\infty$, we have that 
\begin{displaymath}
\ell\psi_G\left( \ell^{-1/\alpha}\theta\mu(B(x,r))\right) \to -\sigma^{\alpha}\vert \theta\vert^{\alpha}\vert \mu(B(x,r))\vert^{\alpha}\left( 1-i\epsilon(\theta \mu(B(x,r)))b\tan(\pi\alpha/2)\right),
\end{displaymath}
Since this convergence is uniform both in $x$ and $r$ (see\eqref{eqg} and \eqref{pr1eq1}), it can be integrated:
\begin{multline*}
\lim_{\ell \to +\infty}\log\left(\varphi_{\frac{J_\ell(\mu)}{\ell^{1/\alpha}} }(\theta)\right)
\\= -\sigma^{\alpha}\vert \theta\vert^{\alpha}\int_{\mathbb{R}^+}\int_{B_1} \vert \mu(B(x,r))\vert^{\alpha}\left( 1-i\epsilon(\theta \mu(B(x,r)))b\tan(\frac{\pi\alpha}{2})\right)\frac{g(x)}{r^{\beta_1+1}}dxdr,
\end{multline*}
which is the $\log$-characteristic function of $Z(\mu)$.
\item Let $\mu(dx)=\phi(x)dx$ with $\phi \in L^1(\Rd)\cap L^\alpha(\Rd)$. We make the change of variable $r\mapsto \ell^{1/\beta_1}r$ in the $\log$-characteristic function of $\frac{J_\ell(\mu)}{\ell^{d/\beta_1}} $
\begin{displaymath}
\log\left(\varphi_{\frac{J_\ell(\mu)}{\ell^{d/\beta_1}}}(\theta)\right)=\I \psi_G\left( \theta \frac{\mu(B(x,\ell^{1/\beta_1}r))}{\ell^{d/\beta_1}}\right) \frac{g(x)1_{B_1}(x)}{r^{\beta_1+1}}dxdr.
\end{displaymath}
We know that
\begin{displaymath}
\psi_G\left( \theta \frac{\mu(B(x,\ell^{1/\beta_1}r))}{\ell^{d/\beta_1}}\right)\to \psi_G\left( \theta \phi(x)c_dr^d\right)
\end{displaymath}
when $\ell \to 0$ and that we can invert this limit with the integration over $\mathbb{R}^+\times \Rd$ using the same arguments we used to obtain \eqref{cc1} thus:
\begin{displaymath}
\lim_{\ell \to +\infty}\left(\varphi_{\frac{J_\ell(\mu)}{\ell^{d/\beta_1}}}(\theta)\right)= \int_{\mathbb{R}^+}\int_{B_1}\psi_G(\theta \phi(x) c_dr^d)\frac{g(x)}{r^{\beta_1+1}}dxdr,
\end{displaymath}
when $\ell\to 0$. Using the computations of Theorem \ref{thm:sbs}, we identify the right-hand side with the $\log$-characteristic function of $\tilde{Z}(\mu)$.
\end{enumerate}
\end{proof}

\bigskip
\section{Zoom-in procedure}
In the foregoing, we have dealt with the zoom-out case ($\rho\to 0$). In this section, we indicate how to adapt our previous results to the zoom-in case when $\rho\to +\infty$. In that purpose, we replace Condition \eqref{cfr} by
\begin{equation}
f_\rho(x,r)\sim_{r\to 0}\lambda(\rho) \frac{g(x)}{r^{\beta(x)+1}} \label{cfzi}.
\end{equation} 
Combined with Conditions \eqref{cf2} and \eqref{cfr1}, it requires $\beta_2<d$. The function $\beta$ is no more interpreted as a tail-index but rather as a concentration index around $0$.

The three new scaling regimes can be heuristically obtained by looking at the mean number of balls with a radius large enough, that cover the origin:
\begin{displaymath}
\begin{split}
\mathbb{E}\left[\#\left\lbrace (x,r): 0\in B(x,r), r>1\right\rbrace\right]&=\int \int_{\left\lbrace (x,r): 0\in B(x,r), r>1\right\rbrace} \rho^{-1}f_\rho(x,r/\rho)dxdr\\
& \sim_{\rho\to +\infty} \lambda(\rho) \int \int_{\left\lbrace (x,r) : 0\in B(x,r), r>1 \right\rbrace} \frac{\rho^{\beta(x)}g(x)}{r^{\beta(x)+1}} dxdr.
\end{split}
\end{displaymath}
\\
Set $B_2=\left\lbrace x\in \Rd; \beta(x)=\beta_2 \right\rbrace$. Under condition \eqref{cbeta}, for $\rho$ large enough:
\begin{multline*}
\lambda(\rho) \rho^{\beta_2} \int \int_{\left\lbrace (x,r) : 0\in B(x,r), r>1 \right\rbrace} \frac{g(x)1_{B_2}(x)}{r^{\beta(x)+1}} dxdr \\ \leq \lambda(\rho) \int \int_{\left\lbrace (x,r) : 0\in B(x,r), r>1 \right\rbrace} \frac{\rho^{\beta(x)}g(x)}{r^{\beta(x)+1}} dxdr \\ \leq \lambda(\rho) \rho^{\beta_2} \int \int_{\left\lbrace (x,r) : 0\in B(x,r), r>1 \right\rbrace} \frac{g(x)}{r^{\beta(x)+1}} dxdr.
\end{multline*}
Since $\int \int_{\left\lbrace (x,r) : 0\in B(x,r), r>1 \right\rbrace} \frac{g(x)}{r^{\beta(x)+1}} dxdr<+\infty$, if the set $B_2$ is non-negligible,  three regimes appear when $\rho \to +\infty$:
\begin{itemize}
\item large-balls scaling: $\lambda(\rho)\rho^{\beta_2}\to 0$,
\item intermediate scaling: $\lambda(\rho)\rho^{\beta_2} \to \ell \in (0,+\infty)$,
\item small-balls scaling: $\lambda(\rho)\rho^{\beta_2 } \to + \infty$.
\end{itemize}
We observe the opposite phenomenon to that observed in the zoom-out case. When the mean radius increases, the biggest radii are those with the biggest concentration index around $0$, i.e. $\beta_2$. The normalization will compensate the increasing of the balls with the biggest radius and the other balls will get negligible. Note that $\lambda(\rho)$ is no longer going to $+\infty$ when $\rho \to +\infty$ except (possibly) in the small-balls scaling. As it was the case in \cite{Bek} and \cite{Jcb}, no limit process are obtained in the large-balls scaling with the zoom-in procedure. Indeed to compensate the increasing of the radii, we need to reduce the density of balls so much that nothing remains at the limit. In the small-balls  scaling and in the intermediate scaling, we obtain the following analogues of Theorems \ref{thm:lbs} and \ref{thm:is}:
\begin{Thm}
Suppose that $\lambda(\rho)\rho^{\beta_2} \to + \infty$ when $\rho \to +\infty$. Let $n(\rho)=\lambda(\rho)^{1/\alpha} \rho^{\beta_2/\alpha}$ and suppose that $B_2=\left\lbrace x\in \Rd; \: \beta(x)=\beta_2 \right\rbrace$ has a non-zero Lebesgue measure, then we have:
\begin{displaymath}
n(\rho)^{-1}\Mr(\cdot) \stackrel{\mcal}{\longrightarrow} Z_2(\cdot),
\end{displaymath}
where $Z_2(\mu)=\I \mu(B(x,r))M'_{\alpha}(dx,dr)$ is a stable integral with respect to the $\alpha$-stable measure $M'_{\alpha}$ with control measure $\sigma^{\alpha}g(x)1_{B_2}(x)r^{-\beta_2-1}dxdr$ and constant skewness function $b$ given by $G$.
\end{Thm}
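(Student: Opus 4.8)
The plan is to establish the stated convergence by following, \emph{mutatis mutandis}, the proof of Theorem \ref{thm:lbs}, with the roles of the small and large radii interchanged, $\beta_1$ replaced by $\beta_2$, $B_1$ replaced by $B_2$, and the limit $\rho\to 0$ replaced by $\rho\to+\infty$. As in Theorem \ref{thm:lbs}, it suffices by the Cramér-Wold device and the linearity of $\m$ and $Z_2$ to prove the one-dimensional convergence, i.e. that for a fixed $\mu\in\mcal$ and $\theta\in\mathbb{R}$ the $\log$-characteristic function \eqref{fcm} of $n(\rho)^{-1}\Mr(\mu)$ converges, as $\rho\to+\infty$, to
$$-\sigma^{\alpha}\I \vert\theta\mu(B(x,r))\vert^{\alpha}\left(1-i\epsilon(\theta\mu(B(x,r)))\tan(\pi\alpha/2)b\right)g(x)\,\frac{1_{B_2}(x)}{r^{\beta_2+1}}\,dxdr,$$
which is the $\log$-characteristic function of $Z_2(\mu)$; it is finite --- so $Z_2(\mu)$ is well defined --- by Point \ref{itlin} of Proposition \ref{propm} applied with $\beta(x)=\beta_2$.

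First I would note that $n(\rho)^{\alpha}=\lambda(\rho)\rho^{\beta_2}\to+\infty$, so $\vert\theta n(\rho)^{-1}\mu(B(x,r))\vert\leq\theta n(\rho)^{-1}\vert\mu\vert(\Rd)\to 0$ uniformly in $(x,r)$, and Lemma \eqref{Lem:g} provides, exactly as in \eqref{eqg}, the uniform equivalence $\lambda(\rho)\psi_G\big(\theta n(\rho)^{-1}\mu(B(x,r))\big)\sim-\sigma^{\alpha}\rho^{-\beta_2}\vert\theta\mu(B(x,r))\vert^{\alpha}\big(1-i\epsilon(\cdot)\tan(\pi\alpha/2)b\big)$. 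Being uniform, this can be integrated against $\rho^{-1}f_\rho(x,r/\rho)\,dxdr$, reducing the problem, as in \eqref{pr1eq1}, to showing that the integral carrying the measure $\frac{f_\rho(x,r/\rho)}{\rho n(\rho)^{\alpha}}\,dxdr$ converges to the displayed limit. Condition \eqref{cfzi} gives $\frac{f_\rho(x,r/\rho)}{\rho n(\rho)^{\alpha}}\sim\rho^{\beta(x)-\beta_2}\frac{g(x)}{r^{\beta(x)+1}}\to\frac{g(x)1_{B_2}(x)}{r^{\beta_2+1}}$ (because $\beta(x)\leq\beta_2$), so I would treat the two cases $x\in B_2$ and $x\notin B_2$ separately.

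For $x\in B_2$, the key observation is that Condition \eqref{cfzi}, which this time controls $f_\rho$ near $r=0$, becomes after the change of variable $r\mapsto r/\rho$: for every $\varepsilon>0$ there is $A>0$ such that $\big\vert\frac{f_\rho(x,r/\rho)}{\rho n(\rho)^{\alpha}}-\frac{g(x)}{r^{\beta_2+1}}\big\vert\leq\varepsilon\frac{g(x)}{r^{\beta_2+1}}$ for all $\rho>0$, all $x\in B_2$ and all $r<A\rho$. Since now $A\rho\to+\infty$, the range $\{r<A\rho\}$ on which this holds exhausts $\mathbb{R}^+$, and integrating over $B_2\times(0,A\rho)$ bounds $\int_0^{A\rho}\vert F_1(r,\rho)-F_2(r)\vert\,dr$ by $\varepsilon\int_{\mathbb{R}^+}\vert F_3(r)\vert\,dr$, where $F_1,F_2,F_3$ are the obvious analogues of those in the proof of Theorem \ref{thm:lbs}; the integrability of $F_3$ near $r=0$ relies on $t>\beta_2$ from the definition of $\mcal$, and the finiteness of $\int_{\mathbb{R}^+}\vert F_3\vert$ on Point \ref{itlin} of Proposition \ref{propm}. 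On the complementary range, $\int_{A\rho}^{+\infty}\vert F_2\vert\,dr\to 0$ as the tail of a convergent integral, while, using \eqref{cfr1} and the bound $\int_{\Rd}\vert\mu(B(x,r))\vert^{\alpha}dx\leq Cr^{s}$ valid for $r\geq 1$ when $\mu\in\mcal$, a change of variable turns $\int_{A\rho}^{+\infty}\vert F_1(r,\rho)\vert\,dr$ into a multiple of $\rho^{s-\beta_2}\int_A^{+\infty}r^{s}\Vert f(\cdot,r)\Vert_\infty\,dr$, which tends to $0$ since $s<\beta_1\leq\beta_2$ and since this last integral is finite --- the large-radius counterpart of Lemma \ref{lem:majf}, a direct consequence of \eqref{cf1} and \eqref{cf2} once $s<d$, and $s<\beta_1\leq\beta_2<d$ in the zoom-in regime. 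The case $x\notin B_2$ is handled verbatim as in Theorem \ref{thm:lbs}: the same two estimates show that the integral over $B_2^c$ has the same limit as $\int_{\mathbb{R}^+}\int_{B_2^c}\vert\mu(B(x,r))\vert^{\alpha}\big(1-i\epsilon(\cdot)\tan(\pi\alpha/2)b\big)g(x)\rho^{\beta(x)-\beta_2}r^{-\beta(x)-1}\,dxdr$, which vanishes by dominated convergence, since $\rho^{\beta(x)-\beta_2}\to 0$ for $x\notin B_2$ and the $\rho$-free dominating function $\vert\mu(B(x,r))\vert^{\alpha}g(x)r^{-\beta(x)-1}$ is integrable over $\mathbb{R}^+\times B_2^c$ by Point \ref{itlin} of Proposition \ref{propm}. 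Putting the two cases together yields that the $\log$-characteristic function of $n(\rho)^{-1}\Mr(\mu)$ converges to that of $Z_2(\mu)$.

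The main obstacle I anticipate is essentially bookkeeping: because $A\rho\to+\infty$ rather than $A\rho\to 0$, one must be careful that the error bound $\varepsilon\int_{\mathbb{R}^+}\vert F_3\vert$ stays uniform in $\rho$ and is attached to the expanding range $\{r<A\rho\}$ and not to its shrinking complement. The only genuinely new analytic input is the integrability $\int_A^{+\infty}r^{s}\Vert f(\cdot,r)\Vert_\infty\,dr<+\infty$, which plays here the role Lemma \ref{lem:majf} plays at small radii in the proof of Theorem \ref{thm:lbs}, and which follows from the standing assumption \eqref{cf2} together with $0<s<\beta_2<d$; the rest transcribes the argument for Theorem \ref{thm:lbs} line by line.
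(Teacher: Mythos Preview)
Your proposal is correct and is precisely the adaptation the paper has in mind: the paper does not write out a proof for this theorem but simply states that it is obtained by adapting the argument of Theorem~\ref{thm:lbs} (interchanging the roles of $\beta_1,B_1$ and $\beta_2,B_2$, replacing $\rho\to 0$ by $\rho\to+\infty$, and swapping the small- and large-radius ranges), which is exactly what you carry out. Your identification of the one new analytic ingredient --- the finiteness of $\int_A^{+\infty} r^{s}\Vert f(\cdot,r)\Vert_\infty\,dr$ under $0<s<\beta_2<d$, via \eqref{cf1}--\eqref{cf2}, as the large-radius counterpart of Lemma~\ref{lem:majf} --- is on point.
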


\bigskip

\begin{Thm}
Suppose $\lambda(\rho)\rho^{\beta_2} \to \ell\in \left]0, +\infty \right[$ when $\rho \to +\infty$ and suppose again  $B_2$ has a non-zero Lebesgue measure, then we have:
\begin{displaymath}
\Mr(\cdot) \stackrel{\mcal}{\longrightarrow} J_\ell(\cdot), 
\end{displaymath}
where $J_\ell(\mu)=\II m \mu(B(x,r)) \tilde{\Pi}'_\ell(dx,dr,dm)$ and $\tilde{\Pi}'_\ell$ is a compensated Poisson random measure on $\mathbb{R}^d\times \mathbb{R}^+\times \mathbb{R}$ with intensity $\ell g(x)1_{B_2}(x)dx r^{-\beta_2-1}drG(dm)$.
\end{Thm}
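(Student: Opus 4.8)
The plan is to transpose the proof of Theorem~\ref{thm:is} to the zoom-in regime. Systematically, $\beta_1$, $B_1$ and the limit ``$\rho\to 0$'' are replaced by $\beta_2$, $B_2$ and ``$\rho\to+\infty$'', Condition~\eqref{cbeta} is read in its zoom-in form $0<\beta_1\le\beta(x)\le\beta_2<d$, and — because \eqref{cfzi} now controls $f_\rho$ for small $r$ rather than for large $r$ — the ranges $(0,A\rho)$ and $(A\rho,+\infty)$ exchange roles: $(0,A\rho)$ becomes the range on which \eqref{cfzi} is used directly, while $(A\rho,+\infty)$ becomes the range whose contribution must be shown negligible.

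First I would check that $J_\ell(\mu)$ is well defined for $\mu\in\mcal$. By Lemma~12.13 in \cite{Kal} it suffices that $\II\big(\vert m\mu(B(x,r))\vert\wedge(m\mu(B(x,r)))^{2}\big)g(x)1_{B_2}(x)\,dx\,r^{-\beta_2-1}dr\,G(dm)<+\infty$; splitting at $\vert m\mu(B(x,r))\vert=1$ and using the truncated-moment bounds for $G$ (Lemma~3.4 in \cite{Jcb}), exactly as in Theorem~\ref{thm:is}, this reduces to $\I\vert\mu(B(x,r))\vert^{\alpha}g(x)1_{B_2}(x)r^{-\beta_2-1}dx\,dr<+\infty$, which is Point~\ref{itlin} of Proposition~\ref{propm} applied with the constant exponent $\beta(x)\equiv\beta_2$ and the bounded weight $g1_{B_2}$. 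Since here $n(\rho)=1$, \eqref{fcm} gives $\log\varphi_{\Mr(\mu)}(\theta)=\I\psi_G(\theta\mu(B(x,r)))\rho^{-1}f_\rho(x,r/\rho)\,dx\,dr$, and from \eqref{cfzi} one has $\rho^{-1}f_\rho(x,r/\rho)\sim\lambda(\rho)\rho^{\beta(x)}g(x)r^{-\beta(x)-1}$ with $\lambda(\rho)\rho^{\beta(x)}=\lambda(\rho)\rho^{\beta_2}\rho^{\beta(x)-\beta_2}\to\ell\,1_{B_2}(x)$ (since $\beta(x)\le\beta_2$, with equality precisely on $B_2$, and $\rho\to+\infty$), so the target is $\ell\I\psi_G(\theta\mu(B(x,r)))g(x)1_{B_2}(x)r^{-\beta_2-1}dx\,dr=\log\varphi_{J_\ell(\mu)}(\theta)$.

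I would then split $\int_{\Rd}=\int_{B_2}+\int_{B_2^c}$ and argue as in Theorem~\ref{thm:is}. On $B_2$, with $G_1,G_2,G_3$ the obvious analogues of the functions there (carrying the factor $\lambda(\rho)\rho^{\beta_2}\to\ell$), a change of variable in \eqref{cfzi} gives, for every $\varepsilon>0$ and a suitable $A$, $\int_0^{A\rho}\vert G_1(r,\rho)-G_2(r,\rho)\vert dr\le\varepsilon\int_0^{+\infty}\vert G_3(r)\vert dr$, and the bound $\vert\psi_G(u)\vert\le K\vert u\vert^{\alpha}$ together with Proposition~\ref{propm} makes $\int_0^{+\infty}\vert G_3(r)\vert dr<+\infty$, so the tail of $G_2$ over $(A\rho,+\infty)$ vanishes as $\rho\to+\infty$. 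The main new point is the negligibility of $\int_{A\rho}^{+\infty}\vert G_1(r,\rho)\vert dr$: for $\rho$ so large that $A\rho\ge1$ one has $\int_{\Rd}\vert\mu(B(x,r))\vert^{\alpha}dx\le Cr^{s}$ on the integration range (with $s<\beta_1\le\beta_2$ since $\mu\in\mcal$), so by \eqref{cfr1} and the substitution $u=r/\rho$, $\int_{A\rho}^{+\infty}\vert G_1(r,\rho)\vert dr\le K\vert\theta\vert^{\alpha}C\,\lambda(\rho)\rho^{s}\int_A^{+\infty}u^{s}\Vert f(\cdot,u)\Vert_\infty du$, where $\int_A^{+\infty}u^{s}\Vert f(\cdot,u)\Vert_\infty du<+\infty$ by Conditions~\eqref{cf1} and \eqref{cf2} (using $s<\beta_2<d$), while $\lambda(\rho)\rho^{s}=\lambda(\rho)\rho^{\beta_2}\rho^{s-\beta_2}\to0$ because $s-\beta_2<0$ and $\rho\to+\infty$; hence $\int_{\mathbb{R}^+}G_1(r,\rho)dr\to\ell\int_{\mathbb{R}^+}\int_{B_2}\psi_G(\theta\mu(B(x,r)))g(x)r^{-\beta_2-1}dx\,dr$. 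On $B_2^c$, with the analogues $\widetilde G_1,\widetilde G_2,\widetilde G_3$, the same large-radii computation gives $\int_{A\rho}^{+\infty}\vert\widetilde G_1(r,\rho)\vert dr\to0$, the change of variable in \eqref{cfzi} gives $\int_{A\rho}^{+\infty}\vert \widetilde G_1(r,\rho)-\widetilde G_2(r,\rho)\vert dr\le\varepsilon\int_{A\rho}^{+\infty}\vert\widetilde G_3(r,\rho)\vert dr$, and finiteness of $\I\vert\mu(B(x,r))\vert^{\alpha}g(x)1_{B_2^c}(x)r^{-\beta(x)-1}dx\,dr$ (Proposition~\ref{propm}) forces $\int_{\mathbb{R}^+}\widetilde G_1(r,\rho)dr$ and $\int_{\mathbb{R}^+}\widetilde G_2(r,\rho)dr$ to share a common limit; that limit is $0$ by dominated convergence, since for $x\in B_2^c$ the integrand $\psi_G(\theta\mu(B(x,r)))g(x)\lambda(\rho)\rho^{\beta(x)}r^{-\beta(x)-1}\to0$ pointwise and, for $\rho$ large enough that $\lambda(\rho)\rho^{\beta_2}\le2\ell$, is dominated by $2\ell K\vert\theta\vert^{\alpha}\vert\mu(B(x,r))\vert^{\alpha}g(x)r^{-\beta(x)-1}$, which is integrable on $\Rd\times B_2^c$ by Proposition~\ref{propm}.

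Adding the $B_2$ and $B_2^c$ contributions yields $\log\varphi_{\Mr(\mu)}(\theta)\to\log\varphi_{J_\ell(\mu)}(\theta)$ for every $\mu\in\mcal$ and $\theta\in\mathbb{R}$, and the Cramér--Wold device together with the linearity of $\m$ and $J_\ell$ then upgrades this to the stated finite-dimensional convergence. I expect the main obstacle to be exactly the large-radii estimate: it is the only point where the argument genuinely departs from Theorem~\ref{thm:is}, and it is where the zoom-in hypothesis $\beta_2<d$ enters essentially (so that $\int_A^{+\infty}u^{s}\Vert f(\cdot,u)\Vert_\infty du<+\infty$ and $\lambda(\rho)\rho^{s}\to0$ with $s<\beta_2$); everything else is a faithful translation of the intermediate-scaling proof.
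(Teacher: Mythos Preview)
Your proposal is correct and is precisely the adaptation of the proof of Theorem~\ref{thm:is} that the paper intends (the paper skips the proof, stating only that it is an adaptation of the zoom-out arguments with $\beta_1,B_1$ replaced by $\beta_2,B_2$ and the roles of small and large radii exchanged). There is one slip: in the $B_2^c$ paragraph, the change of variable in \eqref{cfzi} controls $\int_0^{A\rho}\vert\widetilde G_1-\widetilde G_2\vert\,dr$, not $\int_{A\rho}^{+\infty}$, consistently with your own statement that the two ranges swap roles; with that correction the argument goes through as written.
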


\bigskip

Since the proofs are adaptations of the proofs of section \ref{sec:thm}, we skip them.

\section*{Appendix}
We prove Proposition \ref{propm}.

\begin{proof}     
In what follows, we note $a\vee b=\max(a,b)$.

\begin{enumerate}
\item Let $\mu_1$ and $\mu_2\in \mcal$ and $k\in \mathbb{R}$. Let $s_1,t_1, C_1$ and $s_2,t_2,C_2$ associated respectively to $\mu_1$ and $\mu_2$ according to the definition of $\mcal$. Set $s=s_1\vee s_2<\beta_1\leq \beta_2<t=t_1\wedge t_2$ and $C=C_1\vee C_2$ then, since $\alpha>1$, we have
\begin{displaymath}
\begin{split}
\int_{\Rd}\vert (\mu_1+k\mu_2)&(B(x,r))\vert^{\alpha}dx \\
\leq &\left(\int_{\Rd}\vert \mu_1(B(x,r))\vert^{\alpha}dx+k^{\alpha}\int_{\Rd}\vert\mu_2(B(x,r))\vert^{\alpha}dx\right) 2^{\alpha-1}\\
\leq &\left( C_1(r^{s_1}\wedge r^{t_1})+k^{\alpha}C_2(r^{s_2}\wedge r^{t_2})\right) 2^{\alpha-1}\\
\leq &(1+k^{\alpha})C2^{\alpha-1}(r^s\wedge r^t).
\end{split}
\end{displaymath} 
Thus $\mu_1+k\mu_2\in \mcal$ and $\mcal$ is indeed a linear space.

\bigskip

Let $\mu\in \mcal$ and $s,t, C$ associated to $\mu$, then:
\begin{displaymath}
\begin{split}
\I \! \! \! \vert \mu(B(x,r)) \vert^{\alpha}g(x)r^{-\beta(x)-1}dxdr \leq &\Vert g\Vert_{\infty}\! \! \I  \! \! \! \vert \mu(B(x,r)) \vert^{\alpha}dx\left( r^{-\beta_1-1}\vee r^{-\beta_2-1}\right) dr\\
\leq &C\Vert g\Vert_{\infty}\int_{\mathbb{R}^+} \left( r^s\wedge r^t\right)\left( r^{-\beta_1-1}\vee r^{-\beta_2-1}\right) dr.
\end{split}
\end{displaymath}
Note that $(r^t\wedge r^s)\left( r^{-\beta_1-1}\vee r^{-\beta_2-1}\right)=r^{t-\beta_2-1}$ when $r\leq1$ and \\ $(r^t\wedge r^s)\left( r^{-\beta_1-1}\vee r^{-\beta_2-1}\right)= r^{s-\beta_1-1}$ when $r>1$. Since $\beta_2-t+1<1$ and $\beta_1-s+1>1$, $(r^t\wedge r^s)\left( r^{-\beta_1-1}\vee r^{-\beta_2-1}\right)$ is integrable over $\mathbb{R}^+$ which proves \eqref{maj}.
\item Let $\mu\in \ma$ and $\alpha \leq \alpha'$.
\begin{displaymath}
\begin{split}
\int_{\Rd} \vert \mu(B(x,r)) \vert^{\alpha '}dx &\leq \int_{\Rd} \vert \mu(B(x,r)) \vert^{\alpha '-\alpha}\vert \mu(B(x,r))\vert^{\alpha}dx\\
&\leq \vert\mu\vert(\Rd)^{\alpha '-\alpha}\int_{\Rd} \vert \mu(B(x,r)) \vert^{\alpha}dx\\
&\leq C'(r^s\wedge r^t),
\end{split}
\end{displaymath} 
where $C'=\vert\mu\vert(\Rd)^{\alpha'-\alpha}C$, which proves $\mu\in \maa$.
\item Let $\mu\in \ma$. Then there exist a constant $C$ and $s<\beta_1\leq \beta_2<t$ satisfying the definition. Necessarily, $s<\beta_1'\leq \beta_2'<t$ which entails $\mu \in \mathcal{M}_{\alpha, \beta_1', \beta_2'}$.
\item Let $\mu \in \mcal$ and suppose that $\mu$ has an atom in $a\in \Rd$. Let $\varepsilon >0$ be such that $$ \big\vert \vert\mu\vert\left(B(a,\varepsilon)\right)-\vert\mu\vert(a)\big\vert\leq \vert \mu\vert (a)/2.$$ Then for every $r<\varepsilon/2$ and $x\in B(a,r)$, $\vert\mu (B(x,r))\vert\geq \vert\mu\vert(a)/2$. Integrating on $x\in B(a,r)$ we get:
\begin{displaymath}
\begin{split}
\int_{\Rd}\vert \mu(B(x,r))\vert^{\alpha}dx & \geq \int_{B(a,r)}\vert \mu(B(x,r))\vert^{\alpha}dx\\
&\geq \left( \vert\mu\vert(a)/2\right)^{\alpha}\int_{B(a,r)}dx \\
&\geq \left( \vert\mu\vert(a)/2\right)^{\alpha} c_dr^{d}. 
\end{split}
\end{displaymath}
This is in contradiction with $\mu \in \mcal$ which requires $$\int_{\Rd}\vert \mu(B(x,r))\vert^{\alpha}dx\leq Cr^t $$ for $t>\beta_1>d$ when $r$ is small enough.
\end{enumerate}
\end{proof}
\noindent We prove Lemma \ref{lem:majf}.

\begin{proof}
Let $A>0$ and $t>d$, we have
\begin{displaymath}
\begin{split}
\int_0^A r^t \Vert f(\cdot,r) \Vert_\infty dr &\leq \int_0^{1\wedge A} r^t \Vert f(\cdot, r)\Vert_\infty dr + \int_{1\wedge A}^A r^t\Vert f(\cdot,r)\Vert_\infty dr \\
& \leq \int_0^{1\wedge A} r^d \Vert f(\cdot, r)\Vert_\infty dr + \int_{1\wedge A}^A r^t\Vert f(\cdot,r)\Vert_\infty dr.
\end{split}
\end{displaymath}
The first integral above is finite because of Condition \eqref{cf2}. The integral is finite second because, using condition \eqref{cf1}, $r\mapsto r^t\Vert f(\cdot, r)\Vert_\infty$ is continuous on the bounded interval $\left[1\wedge A,A\right]$. Thus we obtain
\begin{displaymath}
\int_0^A r^t \Vert f(\cdot,r) \Vert_\infty dr<+\infty.
\end{displaymath}  
\end{proof}

\section*{Acknowledgment}

The author thanks his Ph.D advisor Jean-Christophe Breton for his precious help during the redaction of this paper and the Lebesgue center of mathematics ("Investissements d'avenir" program ---
ANR-11-LABX-0020-01) for its financial support.

\newpage

\bibliographystyle{plain}
\bibliography{bib.bib}
\end{document}